\newtheoremstyle{theorem}
  {15pt}          
  {15pt}  
  {\sl}  
  {\parindent}
  {\sc}  
  {. }   
  { }    
  {}     
\theoremstyle{theorem}
\newtheorem{lemma}{Lemma}[section]
\newtheorem{theorem}{Theorem}[section]
\newtheorem{corollary}{Corollary}[section]
\newtheoremstyle{defi}
  {15pt}          
  {15pt}  
  {\rm}  
  {\parindent}     
  {\sc}  
  {. }    
  { }    
  {}     
\theoremstyle{defi}
\newtheorem{definition}{Definition}[section]
\newtheorem{remark}{Remark}[section]
\newtheorem{property}{Property}[section]
\newcommand{\RN}[1]{%
  \textup{\uppercase\expandafter{\romannumeral#1}}%
  }
 \title[ Fractional Diffusion Advection Reaction ]{On the Skewed Fractional Diffusion Advection Reaction Equation on the Interval}
 \author[\normalsize Y. Li]{\normalsize  Yulong Li}
\begin{document}

 \vbox to 2.5cm { \vfill }


 \bigskip \medskip

 \begin{abstract}
 	This article provides techniques of raising the regularity of fractional order equations and resolves fundamental questions on the one-dimensional homogeneous boundary-value problem of skewed (double-sided)  fractional diffusion advection reaction equation (FDARE) with variable coefficients on the bounded interval.   The existence of the true (classical) solution together with norm estimation is established and the precise regularity bound is found; also, the structure of the solution is unraveled, capturing the essence of regularity, singularity, and other features of the solution.  
 	
 	The key analysis lies in  exploring the properties of Gauss hypergeometric functions, solving  coupled Abel integral equations and dominant singular integral equations, and connecting the functions from  fractional Sobolev spaces to  the ones from H$\ddot{\text{o}}$lderian  spaces that admit integrable singularities at the endpoints. 
 	
 \medskip

{\it MSC 2010\/}: Primary 26A33;
                  Secondary 34A08, 46N20, 45E99

 \smallskip

{\it Key Words and Phrases}: Riemann-Liouville,   fractional diffusion,  generalized Abel  equation,  advection,  reaction,  regularity,  double-sided, skewed, integral equations.
 \end{abstract}

 \maketitle




\section{Main results}
Under conditions
	\begin{equation}\label{conditionforthemainresult}
	\begin{cases}
	0<\alpha, \beta <1, \alpha+ \beta=1, 0<\mu<1,\\
	f(x)\in H^*(\Omega), p(x),q(x), k(x)\in C^1(\overline{\Omega}), \\
	p'(x), k'(x) \in H(\overline{\Omega}),\\
	k(x)>0 \, \text{on}\, \overline{\Omega}, \frac{q}{k}-\frac{1}{2}(\frac{p}{k})'\geq 0 \, \text{on}\, \overline{\Omega},\\
	\pi(1-\mu^2)\cot((1+\mu)\pi/2) +4(b-a)\|\frac{k'}{k}\|_{L^\infty(\Omega)}<0,
	\end{cases}
	\end{equation}

we consider the problem
	\begin{equation}\label{equationformainresult}
\begin{cases}
[L(u)](x)= f(x),\, x\in \Omega=(a,b),\\
u(a)=u(b)=0,\\
[L(u)](x) := -Dk(x)(\alpha\, {_aD_x^{-(1-\mu)}}+\beta\,{_xD_b^{-(1-\mu)}})Du\\
\qquad \qquad+p(x)Du+q(x)u(x).
\end{cases}
\end{equation}

We say a function $u(x)$ is a true (classical) solution to \eqref{equationformainresult} if $u\in AC(\overline{\Omega})$, $u(a)=u(b)=0$, $Du\in C(\Omega)$, $(\alpha\, {_aD_x}^{-(1-\mu)}+\beta\,{_xD_b}^{-(1-\mu)})Du \in C^1(\Omega)$ and $[L(u)](x)= f(x)$  $\forall x\in \Omega$.

\vspace{0.2cm}
Our main result is the following:
\begin{theorem}\label{theorem}
	Let conditions \eqref{conditionforthemainresult} be satisfied. Then in the Sobolev space $\widehat{H}^{(1+\mu)/2}_0(\Omega)$, there exists a unique true solution $u(x)$ to  \eqref{equationformainresult} (up to the equivalence of functions). And it is representable by 
	\[
	u(x)= {_aD_x^{-t}J_t},\quad x\in \Omega,
	\]
	$	J_t(x) \in H^*(\Omega)$ (depending on $t$)  provided that $t<1+\mu$.
\end{theorem}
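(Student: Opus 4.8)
The plan is to recast \eqref{equationformainresult} as a system of integral equations, invert its dominant part explicitly by Gauss hypergeometric functions ${}_2F_1$, carry the resulting H\"older-type solution over to the fractional Sobolev scale, and extract uniqueness from the structural inequalities in \eqref{conditionforthemainresult}. Concretely, a true solution has $u\in AC(\overline{\Omega})$ with $u(a)=0$, so writing $\phi=Du$ gives $u={_aD_x^{-1}}\phi$, and integrating $[L(u)]=f$ once from $a$ turns \eqref{equationformainresult} into
\[
k(x)\bigl(\alpha\,{_aD_x^{-(1-\mu)}}+\beta\,{_xD_b^{-(1-\mu)}}\bigr)\phi(x)=g_0(x)+(\mathcal{B}\phi)(x),
\]
where $g_0$ carries the data ($f$ together with an integration constant, the latter fixed jointly with $u(b)={_aD_b^{-1}}\phi=0$ through the value of ${_xD_b^{-(1-\mu)}}\phi$ at $a$) and $\mathcal{B}$ gathers the advection and reaction contributions. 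Because $p,q\in C^1(\overline{\Omega})$ and $p'\in H(\overline{\Omega})$, $\mathcal{B}$ acts compactly on the H\"olderian space $H^*(\Omega)$; dividing by $k>0$ yields a \emph{dominant singular integral equation} $\mathcal{A}\phi=k^{-1}(g_0+\mathcal{B}\phi)$ with $\mathcal{A}:=\alpha\,{_aD_x^{-(1-\mu)}}+\beta\,{_xD_b^{-(1-\mu)}}$ the coupled (two-sided) Abel operator, so the difficulty concentrates in $\mathcal{A}$.

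The central input is a self-contained study of ${}_2F_1$ that, for every $h\in H^*(\Omega)$, produces the explicit solution of the coupled Abel equation $\mathcal{A}\psi=h$ and shows $\mathcal{A}$ to be a linear homeomorphism of $H^*(\Omega)$ onto a weighted H\"older class whose members carry a factor of order $(x-a)^{\mu-1}$ near $a$ and $(b-x)^{\mu-1}$ near $b$ (the left- and right-sided Riemann--Liouville pieces cannot be decoupled here). Applying $\mathcal{A}^{-1}$ recasts the equation as $(I+\mathcal{K})\phi=\mathcal{A}^{-1}(k^{-1}g_0)$ with $\mathcal{K}$ compact, so the Fredholm alternative reduces solvability to triviality of the homogeneous problem. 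Unwinding $u={_aD_x^{-1}}\phi$ and absorbing the endpoint weights produced by $\mathcal{A}^{-1}$ puts $u$ in the stated form $u={_aD_x^{-t}}J_t$ with $J_t\in H^*(\Omega)$; the bound $t<1+\mu$ is sharp because at $t=1+\mu$ the function $J_t={_aD_x^{t}}u$ acquires non-integrable endpoint singularities (of orders $(x-a)^{-1}$, $(b-x)^{-1}$) inherited from those weights --- this is precisely the regularity ceiling of $\mathcal{A}^{-1}$, hence of \eqref{equationformainresult}.

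It remains to settle well-posedness in $\widehat{H}^{(1+\mu)/2}_0(\Omega)$. On one hand, the mapping properties of the fractional integral take $u={_aD_x^{-t}}J_t$ (with $J_t\in H^*(\Omega)$, $t<1+\mu$) into $\widehat{H}^{(1+\mu)/2}_0(\Omega)$ and confirm the pointwise requirements in the definition of a true solution. On the other hand, testing a homogeneous solution $u\in\widehat{H}^{(1+\mu)/2}_0(\Omega)$ against itself --- divide by $k$ and integrate by parts --- gives
\[
a_0(u)+\int_\Omega\Big(\frac{q}{k}-\frac{1}{2}\Big(\frac{p}{k}\Big)'\Big)u^2\,dx=\int_\Omega\frac{k'}{k}\,u\,\mathcal{A}(Du)\,dx,\qquad a_0(u):=\langle Du,\mathcal{A}(Du)\rangle_{L^2(\Omega)},
\]
where $\alpha+\beta=1$ cancels the skewness so that $a_0(u)\geq\sin(\mu\pi/2)\,\|u\|_{\widehat{H}^{(1+\mu)/2}_0(\Omega)}^{2}\geq0$, the advection--reaction term is non-negative by $\frac{q}{k}-\frac{1}{2}(\frac{p}{k})'\geq0$, and the last term, by the sharp $L^2$-operator bounds for the Riemann--Liouville operators together with the reflection formula for $\Gamma$, is strictly dominated by $a_0(u)$ whenever $u\not\equiv0$ --- precisely under the hypothesis $\pi(1-\mu^2)\cot((1+\mu)\pi/2)+4(b-a)\|\frac{k'}{k}\|_{L^\infty(\Omega)}<0$. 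Hence $u\equiv0$: the homogeneous problem has no nontrivial $\widehat{H}^{(1+\mu)/2}_0$-solution, which removes the Fredholm kernel (existence), gives uniqueness in $\widehat{H}^{(1+\mu)/2}_0(\Omega)$, and, from boundedness of $(I+\mathcal{K})^{-1}$ and of $\mathcal{A}^{-1}$, yields $\|J_t\|_{H^*(\Omega)}\leq C\|f\|_{H^*(\Omega)}$.

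\textbf{Main obstacle.}\ The crux is the sharp inversion of the variable-coefficient skewed operator $k(x)\mathcal{A}$: pinning down the precise mapping properties of $\mathcal{A}$ and $\mathcal{A}^{-1}$ between $H^*(\Omega)$ and the weighted H\"older classes demands delicate Gauss hypergeometric estimates; one cannot reduce to a one-sided Abel equation but must keep the left and right Riemann--Liouville parts coupled; and the exact exponent $1+\mu$ must be read off from the inversion. The secondary knot, on which the uniqueness half rests, is the passage between the pointwise/integral-equation formulation and the weak $\widehat{H}^{(1+\mu)/2}_0$ formulation --- the ``connection'' between H\"olderian and fractional Sobolev spaces --- ensuring that the two notions of solution designate the same function.
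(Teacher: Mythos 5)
Your architecture is genuinely different from the paper's: the paper first produces a weak solution in $\widehat{H}^{(1+\mu)/2}_0(\Omega)$ by Lax--Milgram for a carefully modified bilinear form, and only then raises its regularity by an \emph{iterated} inversion of the coupled Abel operator (gaining H\"older/integration order $\mu$ per step), whereas you propose to solve a Fredholm integral equation directly in $H^*(\Omega)$ and afterwards connect to the Sobolev space. The ingredients overlap (explicit inversion of $\mathcal{A}=\alpha\,{_aD_x^{-(1-\mu)}}+\beta\,{_xD_b^{-(1-\mu)}}$ via the dominant singular integral equation; the coercivity estimate using $\alpha+\beta=1$ and the last condition of \eqref{conditionforthemainresult}), but your scheme has two gaps that are not cosmetic. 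First, the Fredholm setup is not well-founded: $H^*(\Omega)$ is a union of weighted H\"older spaces, not a Banach space, and $\mathcal{A}$ maps $H^*(\Omega)$ one-to-one \emph{onto the strictly smaller class} $H^*_{1-\mu}(\Omega)$. For a generic $\phi\in H^*(\Omega)$ the advection contribution $p\phi$ has only the H\"older exponent of $\phi$, which may be $\le 1-\mu$, so $k^{-1}\mathcal{B}\phi$ need not lie in the range of $\mathcal{A}$ and $\mathcal{K}=\mathcal{A}^{-1}k^{-1}\mathcal{B}$ is simply not defined on the space you work in; compactness is asserted, never proved. A single application of $\mathcal{A}^{-1}$ only buys regularity of order $\mu$, which is exactly why the paper must bootstrap $n$ times with $(2+n)\mu\ge\sigma$ to approach (but not reach) the exponent $1+\mu$.

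Second, and more damaging, the step ``uniqueness in $\widehat{H}^{(1+\mu)/2}_0(\Omega)$ removes the Fredholm kernel'' does not follow. A kernel element of your homogeneous integral equation is some $\phi\in H^*(\Omega)$, whose endpoint singularities may have order arbitrarily close to $1$; then $u={_aD_x^{-1}}\phi$ satisfies ${_aD_x^{(1+\mu)/2}}u={_aD_x^{-(1-\mu)/2}}\phi$, which can blow up like $(x-a)^{-(1+\mu)/2}$ and hence fail to be in $L^2(\Omega)$ --- such a $u$ is \emph{not} in $\widehat{H}^{(1+\mu)/2}_0(\Omega)$, so your energy identity never sees it and cannot kill it. (The energy identity itself is also only formal, since $Du\notin L^2(\Omega)$ in general; one must split $\langle Du,\mathcal{A}Du\rangle$ symmetrically into order-$(1+\mu)/2$ pieces as in Definition \ref{def-bilinear}.) The paper avoids both problems by never posing a fixed-point problem in the H\"older scale: it starts from the unique Sobolev weak solution and shows \emph{that particular function} admits an $H^*$-representative, using the delicate comparison of the a.e.\ solution $\Psi_1$ with the H\"olderian solutions $\Psi_2,\Psi_3$ of the same dominant equation (Lemma \ref{lem-raising ragularity-1}). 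Finally, your sharpness claim at $t=1+\mu$ overstates what is known: the paper only exhibits failure for $t>1+\mu$ and explicitly leaves the endpoint case as an open question.
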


We will also show that $1+\mu$ is optimal by giving a counter example, namely, the representation $	u(x)= {_aD_x^{-t}J_t}$ can fail for any $t>1+\mu$, therefore, Theorem \ref{theorem} is sharp up to the endpoint.

As a byproduct,  Corollary \ref{cor:1} clarifies that the homogeneous boundary-value problems \eqref{equationformainresult} and \eqref{equationformainresult-RL} that involve different types of fractional derivatives always have the same classical solution;   Corollary \ref{cor:Thesecondone} illustrates the behaviour of $\mu$-th order derivative and first derivative of the true solution (i.e., ${_aD_x^\mu}u, {_xD_b^\mu}u, Du$) near the boundary points, more precisely, under suitable conditions, ${_aD_x^\mu}u$ and ${_xD_b^\mu}u$ always vanish at $x=a$ and $x=b$ respectively;  meanwhile, $Du$ either vanishes or blows up at $x=a,b$ meaning that it does not admit  non-zero values at the endpoints, which suggests  difference from integer-order diffusion equations.
\section{Introduction}\label{sec:introduction}
The boundary-value problem of fractional diffusion advection reaction equation (FDARE) is one of the fundamental problems in the subject of fractional-order differential equations. In the last decade, the FDARE has been widely discussed in journals of  numerical analysis, applied analysis  and applied physics, and its applications have been found  in different scientific areas. Compared to  integer-order differential equations, fractional-order differential equations exhibit new features and open many opportunities in modelling various phenomena in physics. However, it turns out that the theoretical analysis of fractional-order equations can be very challenging and  that many fundamental questions still remain open.  Therefore, more additional attention is deserved towards the development of this subject.

We will focus on the homogeneous boundary-value problem of  FDARE in this article, as a part of our goal of systematic investigation of FDARE,  this work is a continuation of our previous work \cite{glsima18}, in which the skewed FDARE was investigated on the whole real axis. In the whole real line $\mathbb{R}$, the  properties of the solution to  FDARE behave  regularly and similarly to the ones of the solution to integer-order diffusion equations,  however, this is not the case on the  bounded interval.
Among extensive papers on the skewed FDARE, pioneering work and  excellent discussion  that are most related to our work appeared in \cite{ernmpde06} (2006),   \cite{MR3802435} (2018) and \cite{MR4024334} (2019). Therefore this paper is regarded as a  development of these fundamental work from a unified point of view.

During the study of FDARE and in order to obtain a satisfying discussion, the following  deserves  attention:

1.  The results of FDARE on the interval $(0,1)$ can not be directly generalized to that on an arbitrary interval  $(a,b)$ by  a simple transformation of the variable. Scaling of the length of  the domain is actually coupled with the variable coefficients. In this work, we are interested in studying FDARE on the general interval $(a,b)$.

2. The variational formulation of FDARE needs to be constructed with caution such that it does not  intertwine with raising the regularity at the current stage, which is a simple idea but an important start to the whole analysis.

3. The smoothness of  coefficients and the source function of FDARE usually can not guarantee the smoothness of  weak solution, what are the appropriate functional spaces to raise the regularity of weak solution and to seek the true solution needs to be considered.

\section{Strategy of the proof}\label{sec:SP}

The strategy for the whole proof of Theorem \ref{theorem} is two big steps. First, we intend to establish the weak solution for an appropriate variational formulation. Secondly, we try to raise the regularity of weak solution in a certain sense to recover the classical solution. More precisely,

 1. due to the presence of the advection term and variable coefficients,   we will adopt a suitably modified variational formulation such that we are able to establish the existence of a weak solution and without a need to raise the regularity at this stage.  This is where we get started, and  later, by picking up some regularity for the weak solution we  convert this modified variational formulation equation back to  original FDARE pointwisely, which automatically implies that this weak solution is  a classical solution (true solution).

2. the main body of the work, which is also the most challenging part, is to convert this weak solution to the true solution and the challenging is attributed to two aspects. First,  the solution $u(x)$ to FDARE usually dramatically lack regularity at the boundary regardless of the smoothness of coefficients and right-hand side function of FDARE, which suggests that the true solution $u(x)$ should be sought in functional spaces that admit singularities at the endpoints; Second, only limited regularity of the solution can be picked up in the context of fractional Sobolev spaces, which is usually not enough  for a  pointwise restoration of FDARE and results in difficulty in carrying the mathematical techniques performed in integer-order elliptic equations to the analysis of FDARE (for example, difference quotient technique is not any more applicable, etc.), new framework has to be developed.  

Taking into account above philosophy,  the analysis is led to:  we establish the existence of weak solution $u(x)$  in the usual fractional Sobolev space $\widehat{H}_0^{(1+\mu)/2}(\Omega)$, however, raise the regularity of ${_aD_x^\mu}u$  (therefore raising the regularity of $u(x)$) in the H$\ddot{\text{o}}$lderian  space that admits integrable singularities at the boundary, namely $H^*(\Omega)$, to better spaces $H^*_t(\Omega) (0<t<1)$; and in the middle, we  need to connect this two spaces $\widehat{H}_0^{(1+\mu)/2}(\Omega)$ and $H^*(\Omega)$ by showing that ${_aD_x^\mu}u$ which is originally from $\widehat{H}_0^{(1-\mu)/2}(\Omega)$ is actually located in $H^*(\Omega)$.
 Roughly speaking, we have essentially ``three" steps:
\[
u\in \widehat{H}_0^{(1+\mu)/2}(\Omega) \longrightarrow {_aD_x^\mu}u\in H^*(\Omega)\longrightarrow {_aD_x^\mu}u \in \bigcap_{0<t<1}H^*_t(\Omega).
\]
After integrating ${_aD_x^\mu}u$ by ${_aD_x^{-\mu}}$, we have 
\[
u(x)={_aD_x^{-(\mu+t)}}J_t, J_t \in H^*(\Omega), 0<t<1,
\]
which will  yield the representation in Theorem \ref{theorem} by adding  the case $t\leq 0$.
\section{Organization of the work}
Readers can perform quick search either through section titles or Definition, Property,  Lemma, Theorem and Corollary numbers.

\begin{itemize}
	\item  Section \ref{Notations} is about convention and notation.
	\item   Section \ref{sec:FRLO} gathers and lists necessary definitions and associated properties, most of which have been given specific citation information for readers' convenience. They will be extensively invoked during subsequent proofs.
	\item  Section \ref{sec:variationalformulation} is to  set up a suitable variational formulation and establish the existence of weak solution and norm estimation.
	\item  In Section \ref{sec:GeneralizedAbelequations}, three lemmas on coupled Abel integral equations will be established, which will serve the counter example at the end of Section \ref{sec:proofoftheorem} and Corollary \ref{cor:Thesecondone}  in Section \ref{sec:applications}.
	\item In Section \ref{sec:raisingtheregularity}, another three lemmas on raising the regularity will be established, which are  key steps towards the whole proof of Theorem \ref{theorem}.
	\item Section \ref{sec:proofoftheorem} provides the whole proof of Theorem \ref{theorem} and an example.
	\item Section \ref{sec:applications} gives two corollaries and proposes a question.
\end{itemize}

\section{Convention and notation}\label{Notations}

Convention
\begin{itemize}
\item    $\Omega=(a,b), \overline{\Omega}=[a,b]$ and $-\infty <a,b<\infty$, whenever they appear throughout the material.
\item We shall often not distinguish $``="$ at every point from $``="$ almost every point  in those equations when there is no chance of misunderstanding. On some occasions, the  notation $\overset{a.e.}{=}$  will be used  for  emphasizing the validity for ``almost everywhere"  to draw the reader's attention for those cases that are of importance.
\item  Whenever we deal with a function $f(x)$ belonging to Sobolev or $L^p$ spaces, it is implicitly assumed that $f$ denotes a suitable representative of the equivalence classes, unless otherwise specified.
\item All the functions are default to be real-valued, and all the constants that will appear in different contexts will be assumed to be real constants.
\end{itemize}

Notation
\begin{itemize}
\item $H^\lambda(\overline{\Omega})$: H$\ddot{\text{o}}$lderian  space ($\lambda>0$).
\vspace{0.2cm}
\item $H(\overline{\Omega}):=\underset{\lambda>0}{\bigcup}H^\lambda(\overline{\Omega})$.
\vspace{0.2cm}
\item $H^\lambda_0(\overline{\Omega}):=\{f:f(x)\in H^\lambda(\overline{\Omega}), f(a)=f(b)=0\}$.
\vspace{0.2cm}
\item $H^\lambda(\rho, \overline{\Omega})$: weighted H$\ddot{\text{o}}$lderian  space, ($\rho(x)=\prod_{k=1}^{n}|x-x_k|^{\mu_k}, x_k, x\in \overline{\Omega} $, $n$ is a positive integer).
\vspace{0.2cm}
\item  $H^\lambda_0(\rho, \overline{\Omega}):=\{f: f(x)\in H^\lambda(\rho, \overline{\Omega}), \rho(x_k) f(x_k)=0, k=1,\cdots, n\}$.
\vspace{0.2 cm}
\item  
$
H_0^\lambda(\epsilon_1,\epsilon_2):=\{f:f(x)=\frac{g(x)}{(x-a)^{1-\epsilon_1}(b-x)^{1-\epsilon_2}}, g(x)\in H^\lambda_0(\overline{\Omega})\}.
$
\vspace{0.1cm}
\item $
H^*(\Omega):=\underset{\underset{0<\lambda\leq 1}{0<\epsilon_1,\epsilon_2}}{\cup} H_0^\lambda(\epsilon_1,\epsilon_2).
$
\vspace{0.2cm}
\item  $
H^*_\sigma(\Omega):=\underset{\underset{\sigma<\lambda\leq 1}{0<\epsilon_1,\epsilon_2}}{\cup} H_0^\lambda(\epsilon_1,\epsilon_2).
$
\vspace{0.2cm}
\item ${_a}D_{x}^{-\sigma}$, ${_x}D_{b}^{-\sigma}$, $\boldsymbol{D}^{-\sigma}$ and $\boldsymbol{D}^{-\sigma * }$ represent fractional integrals if $\sigma>0$, identity operators if $\sigma=0$ and fractional derivatives if $\sigma<0$ (see specific definitions in Section \ref{sec:FRLO}).
\vspace{0.1cm}
\item $AC(\overline{\Omega})$: the set of absolutely continuous functions on $\overline{\Omega}$.
\item  $C(G)$: the set of all continuous functions on a  set $G$.
\item $C^n(G):=\{f: f^{(n)}(x)\in C(G)\}$.
\item Both $Df$ and $f'$ represent the usual derivative of a function $f$.
\item $(f , g)_\Omega$ and $(f ,g )_\mathbb{R}$ denote the  integrals $\int_\Omega fg$ and $\int_\mathbb{R} fg$, respectively.
\item $C_0^\infty(\Omega)$ consists of all the infinitely  differentiable functions on $\Omega$ and with compact support in $\Omega$.
\end{itemize}
\section{Prerequisite Knowledge}\label{sec:FRLO}
Necessary preliminaries that will be of use are presented first, including definitions and associated properties. These definitions and properties are known and most of them are quoted from the literature directly. 
Properties given in this section may not be in the strongest forms, but they are adequate for our purpose.

%
\subsection{Riemann-Liouville integrals and their properties}
\label{ssec:FRLI}
\begin{definition} \label{def:RLI}
Let $w:(c,d) \rightarrow \mathbb{R}, (c,d) \subset \mathbb{R}$ and  $\sigma >0$. The left and right Riemann-Liouville fractional integrals of order $\sigma$ are, formally respectively, defined as
\begin{align}
({_a}D_{x}^{-\sigma}w)(x) &:= \dfrac{1}{\Gamma(\sigma)}\int_{a}^{x}(x-s)^{\sigma -1}w(s) \, {\rm d}s, \label{eq:LRLI}\\
({_x}D_{b}^{-\sigma} w)(x) &:= \dfrac{1}{\Gamma(\sigma)}\int_{x}^{b}(s-x)^{\sigma-1}w(s) \, {\rm d}s,  \label{eq:RRLI}
\end{align}
where $\Gamma(\sigma)$ is  Gamma function. For convenience, when $c=-\infty$ or $d=\infty$ we set
\begin{equation}\label{eq:LRRLI}
(\boldsymbol{D}^{-\sigma}w)(x) :={_{-\infty}}D_{x}^{-\sigma} w \text{ and }
(\boldsymbol{D}^{-\sigma * }w)(x) :={_{x}}D_{\infty}^{-\sigma} w. 
\end{equation}
\end{definition}
In particular, if $\sigma=0$, ${_a}D_{x}^{-\sigma}$, ${_x}D_{b}^{-\sigma}$, $\boldsymbol{D}^{-\sigma}$ and $\boldsymbol{D}^{-\sigma * }$ are regarded as identity operators. 
\begin{property}[\cite{MR1347689}, eq. (2.72), (2.73), p. 48]\label{bounded-property}
Given $\sigma> 0$, fractional operators ${_aD_x^{-\sigma}}$ and ${_xD^{-\sigma}_b}$ are bounded in $L^p(\Omega) (p\geq 1)$:
\begin{equation}
\|{_aD_x^{-\sigma}}\psi\|_{L^p(\Omega)}\leq K\|\psi\|_{L^p(\Omega)}, \, \|{_xD^{-\sigma}_b}\psi\|_{L^p(\Omega)}\leq K\|\psi\|_{L^p(\Omega)}, \, K=\dfrac{(b-a)^\sigma}{\sigma\Gamma(\sigma)}.
\end{equation}
\end{property}
\begin{property}[\cite{MR1347689}, eq. (2.19), p. 34]\label{p-reflection}
Let $\sigma>0$ and $(Qf)(x)=f(a+b-x)$, then the following operators are reflective:
\begin{equation}
Q{_aD_x^{-\sigma}}Q={_xD_b^{-\sigma}}.
\end{equation}
\end{property}
\begin{property}\label{pro-mappings}
	If $0<\sigma<1$, $1<p<1/\sigma$, then the fractional operators ${_aD_x^{-\sigma}}$, ${_xD_b^{-\sigma}}$ are bounded from $L^p(\Omega)$ into $L^q(\Omega)$ with $q=\frac{p}{1-\sigma p}$.
\end{property}
\begin{property}\label{pro:MappingIntoHolderianFromLp}
	If $0<\frac{1}{p}<\sigma<1+\frac{1}{p}$, fractional operators ${_aD_x^{-\sigma}}$ and ${_xD^{-\sigma}_b}$  map the space $L^p(\Omega)$ into the H$\ddot{\text{o}}$lderian  space $H^{\sigma-1/p}(\overline{\Omega})$.
\end{property}
\begin{remark}
Property~\ref{pro-mappings} is a combination of Theorem 3.5 (\cite{MR1347689}, p. 66) and Property~\ref{p-reflection},  and Property~\ref{pro:MappingIntoHolderianFromLp} is a combination of Corollary of Theorem 3.6 (\cite{MR1347689}, p. 69) and Property~\ref{p-reflection}.
\end{remark}
\subsection{Riemann-Liouville derivatives and their properties}
\begin{definition}\label{def:RLD}
Let $w:(c,d) \rightarrow \mathbb{R}, (c,d) \subset \mathbb{R}$ and  $\sigma >0$. Assume  $n$ is the smallest integer greater than $\sigma$ (i.e., $n-1 \leq \sigma < n$). The left and right Riemann-Liouville fractional derivatives of order $\sigma$ are, formally respectively, defined as
\[
({_a}D_x^{\sigma}w)(x) := \frac{{\rm d}^n}{{\rm d}x^n} {_a}D_x^{\sigma-n} w  \text{ and }~
({_x}D_b^{\sigma}w)(x) := (-1)^n \frac{{\rm d}^n}{{\rm d} x^n} {_x}D_b^{\sigma-n} w.
\]
For ease of notation, when $c=-\infty$ or $d=\infty$ we set
\begin{equation}
(\boldsymbol{D}^{\sigma} w )(x)= {_{-\infty}}D_{x}^{\mu}w \text{ and }
(\boldsymbol{D}^{\sigma*}w)(x) =  {_{x}}D^{\mu}_{\infty}w .\label{def:Infty}
\end{equation}
\end{definition}
 \begin{property}[\cite{glsima18}, Theorem 4.1]\label{thm:symmetry}
For $v, w \in C_0^\infty(\Omega)$ and $\sigma\geq 0$, it is true that
\begin{equation} 
\begin{aligned}
& (\boldsymbol{D}^{\sigma} v , \boldsymbol{D}^{\sigma} w )_{\mathbb{R}}  =
  (\boldsymbol{D}^{\sigma*} v , \boldsymbol{D}^{\sigma*} w )_{\mathbb{R}} = (2\pi)^{2\sigma}\int_\mathbb{R}|\xi|^{2\sigma} \widehat{v}(\xi) \overline{\widehat{w}(\xi)}  \,{\rm d}\xi,\\
 &(\boldsymbol{D}^{\sigma} v, \boldsymbol{D}^{\sigma*} w )_{\mathbb{R}} +(\boldsymbol{D}^{\sigma} w, \boldsymbol{D}^{\sigma*} v )_{\mathbb{R}}
 =2\cos(\sigma \pi)(\boldsymbol{D}^{\sigma} v , \boldsymbol{D}^{\sigma} w )_{\mathbb{R}}.
 \end{aligned}
 \end{equation}
 \end{property}
\begin{property}[\cite{glsima18}, Property 2.4] \label{prop:Boundedness}
Let $0<\sigma $ and  $w\in C_0^\infty(\mathbb{R})$, then $\boldsymbol{D}^\sigma w, \boldsymbol{D}^{\sigma*} w \in L^p(\mathbb{R})$ for any $1\leq p<\infty$.
\end{property}
\subsection{Hypergeometric function and properties}
\begin{definition}[\cite{MR1688958}, pp. 64, 65]\label{def:Hygeometric}
For $|x|<1$, the  hypergeometric function is defined by the series
\begin{equation}
_2F_1(\sigma_1,\sigma_2,\sigma_3;x)=\sum_{n=0}^\infty\dfrac{(\sigma_1)_n(\sigma_2)_n}{(\sigma_3)_nn!}x^n,
\end{equation}
and by analytic continuation elsewhere. One of such analytic continuation is given by
 
\begin{equation}
_2F_1(\sigma_1,\sigma_2,\sigma_3;x)=\dfrac{\Gamma(\sigma_3)}{\Gamma(\sigma_2)\Gamma(\sigma_3-\sigma_2)}\int_0^1 t^{\sigma_2-1}(1-t)^{\sigma_3-\sigma_2-1}(1-xt)^{-\sigma_1}\, dt
\end{equation}
if $0<\sigma_2<\sigma_3$.
\end{definition}

The Pochhammer symbol $(z)_n$  in above definition with integer $n$ means
\begin{equation}
(z)_n=z(z+1)\cdots(z+n-1),\, n= 1,2,\cdots, (z)_0=1.
\end{equation}

We put down only some of the properties that will be needed.

\begin{property}\label{interchange}
\begin{equation}
_2F_1(\sigma_1,\sigma_2,\sigma_3;x)={_2F_1}(\sigma_2,\sigma_1,\sigma_3;x).
\end{equation}
\end{property}
\begin{property}[\cite{MR1347689}, eq. (2.46), p. 41]\label{coupled-inte}
Let $\sigma_2>0$, $ \sigma_1, \sigma_3\in \mathbb{R}$, and $\psi(x)=(x-a)^{\sigma_2-1}(b-x)^{\sigma_3-1}$, then for $x\in \Omega$
\begin{equation}
{_aD^{-\sigma_1}_x}\psi= \dfrac{(b-a)^{\sigma_3-1}\Gamma(\sigma_2)}{\Gamma(\sigma_1+\sigma_2)}(x-a)^{\sigma_1+\sigma_2-1} {_2F_1}(1-\sigma_3,\sigma_2,\sigma_1+\sigma_2;\frac{x-a}{b-a}).
\end{equation}
\end{property}
\begin{property}[\cite{MR1688958}, Theorem 2.3.2, p. 78]\label{pro:decompostion-F}
\begin{equation}
\begin{aligned}
&{_2F_1}(\sigma_1,\sigma_2,\sigma_1+\sigma_2+1-\sigma_3;1-x)\\
&=A\cdot{_2F_1}(\sigma_1,\sigma_2,\sigma_3;x)+B\cdot x^{1-\sigma_3}{_2F_1}(1+\sigma_1-\sigma_3,1+\sigma_2-\sigma_3,2-\sigma_3;x),
\end{aligned}
\end{equation}
where
\begin{equation}
A=\frac{\Gamma(\sigma_1+\sigma_2+1-\sigma_3)\Gamma(1-\sigma_3)}{\Gamma(\sigma_1+1-\sigma_3)\Gamma(\sigma_2+1-\sigma_3)},\quad B=\frac{\Gamma(\sigma_3-1)\Gamma(\sigma_1+\sigma_2+1-\sigma_3)}{\Gamma(\sigma_1)\Gamma(\sigma_2)}.
\end{equation}
\end{property}
\begin{property}[\cite{MR1688958}, Theorem 2.2.5, p. 68]\label{pro:Euler}
\begin{equation}
{_2F_1}(\sigma_1,\sigma_2,\sigma_3;x)=(1-x)^{\sigma_3-\sigma_1-\sigma_2}{_2F_1}(\sigma_3-\sigma_1,\sigma_3-\sigma_2,\sigma_3;x).
\end{equation}
\end{property}
\subsection{Functional spaces $H^*(\Omega)$, $H^*_\sigma(\Omega)$ and  properties}\label{Sec-IFS}
 We list several mapping properties related to $H^*(\Omega)$ and $H^*_\sigma(\Omega)$, which play an important role in connecting the whole analysis of  this work.
\begin{property}[\cite{MR1347689}, Lemma 30.2, p. 618]\label{pro-WeightedMapping}
	Let $0<\sigma<1$, the weighted singular operator
	\begin{equation}
	(S_{\nu_a,\nu_b} f)(x)=\frac{1}{\pi}\int_a^b\left(\frac{x-a}{t-a}\right)^{\nu_a}\left(\frac{b-x}{b-t}\right)^{\nu_b}\frac{f(t)}{t-x}dt
	\end{equation}
	maps the space $H^*_\sigma(\Omega)$ into itself provided that $\sigma-1<\nu_a\leq  \sigma$ and $\sigma-1<\nu_b\leq \sigma$.
\end{property}
\begin{property}[\cite{MR1347689}, Theorem 13.14, p. 248]\label{pro-correspondence-1}
	Let $0<\sigma<1$, the fractional integration operators ${_aD_x^{-\sigma}}$ and ${_xD_b^{-\sigma}}$ map the space $H^*(\Omega)$ one-to-one and onto the space $H^*_\sigma(\Omega)$, respectively. Consequently, ${_aD_x^{-\sigma}}(H^*(\Omega))={_xD_b^{-\sigma}}(H^*(\Omega))$.
\end{property}
\begin{property}\label{Existence-Abel}
 Let $0<\sigma<1$, $\gamma_1, \gamma_2>0$, $\gamma_1{_aD_x^{-\sigma}}+\gamma_2{_xD_b^{-\sigma}}$ maps $H^*(\Omega)$ one-to-one and onto the space $H^*_\sigma(\Omega)$.
\end{property}
\begin{remark}
 Property~\ref{Existence-Abel} is a combination of  Theorem 30.7 (\cite{MR1347689}, p. 626) and Property \ref{pro-correspondence-1}.
\end{remark}
\subsection{Dominant singular integral and properties}
\begin{definition} The singular integral operator $S$ is formally defined as
\[
(S\psi)(x)=\frac{1}{\pi}\int_a^b \frac{\psi (t)}{t-x} dt, x\in \Omega,
\]
\end{definition}
the convergence being understood in the principal value sense.
\begin{property}[\cite{MR1347689}, Corollary 2, p. 208]\label{pro:CR}
Denote $r_a(x)=x-a$, $ r_b(x)=b-x, x\in \overline{\Omega}$.
\[
{_xD_b^{-\lambda}}(r_b^{-\lambda}S(r_b^\lambda\psi))=r_a^\lambda S(r_a^{-\lambda}{_xD_b^{-\lambda}}\psi)
\]
 is valid for $0<\lambda<1, \psi \in L^p(\Omega), p> 1$.
\end{property}
\begin{property} [\cite{MR1347689}, Theorem 11.1, p. 200]  \label{pro:BInW}
Let $n$ be a positive integer, $0<\lambda<1$, then the operator $S$ is bounded in the space $H^\lambda_0(\overline{\Omega})$, and in the weighted space $H^\lambda_0(\rho, \overline{\Omega})$,  $\rho(x)=\prod_{k=1}^{n}|x-x_k|^{\mu_k}, x_k, x\in \overline{\Omega} $, provided that $\lambda<\mu_k<\lambda+1  \,(k=1,\cdots, n)$.
\end{property}
\begin{property}\label{pro:SolvabilityofDSI}
Let $c_1, c_2$ be constants and $c_1^2+c_2^2\neq 0$. Denote 
$\frac{c_1-ic_2}{c_1+ic_2}=e^ {i\theta}$ and choose the value of $\theta$ so that $0\leq \theta<2\pi$.  Further denote spaces $X_1=H^*(\Omega)$, $X_2=H^*(\Omega)\cap C( (a,b] )$, $X_3=H^*(\Omega)\cap C([a,b))$  and $X_4=H^*(\Omega)\cap C([a,b])$, and
define $n_a$ and $n_b$ as follows:
\[
\begin{aligned}
n_a(X_1)&= n_a(X_2)=1, n_a(X_3)=n_a(X_4)=0;\\ 
n_b(X_1)&=n_b(X_3)=1, n_b(X_2)=n_b(X_4)=0.
\end{aligned}
\]
Consider  the problem  
\begin{equation}\label{equ:TheoremDSI}
\begin{cases}
c_1\psi(x) + \frac{c_2}{\pi}\int_a^b\frac{\psi(t)}{t-x} dt=f(x), x\in \Omega,\\
 \text{where}\, f(x)=\frac{f_*(x)}{(x-a)^{1-\nu_a}(b-x)^{1-\nu_b}}, \, f_*(x)\in H(\overline{\Omega}), \nu_a,\nu_b\in \mathbb{R}.
\end{cases}
\end{equation}
 Then each of  the following holds:
 \begin{enumerate}
 \item  If \[
 1-n_a(X_i)-\frac{\theta}{2\pi}<\nu_a,\, \frac{\theta}{2\pi}-n_b(X_i)<\nu_b,
 \]
 then
\eqref{equ:TheoremDSI} is  unconditionally solvable in  $X_i \,(i=1, 2, \text{or}\, 3)$ 
and its general solution $\psi_i(x)$ in $X_i \,(i=1, 2, \text{or}\, 3)$ is given by
\begin{equation}\label{eq-sloution-CSE}
\begin{aligned}
\psi_i(x)&=C(x-a)^{1-n_a(X_i)-\frac{\theta}{2\pi}}(b-x)^{\frac{\theta}{2\pi}-n_b(X_i)}+\frac{c_1f(x)}{c_1^2+c_2^2}-\\
&\frac{c_2}{\pi(c_1^2+c_2^2)}\int_a^b\left( \frac{x-a}{t-a}\right)^{1-n_a(X_i)-\frac{\theta}{2\pi}}\left(\frac{b-x}{b-t}\right)^{\frac{\theta}{2\pi}-n_b(X_i)} \frac{f(t)}{t-x} dt ,
\end{aligned}
\end{equation}
where 
\[
 C=0 \,\, \text{for}\, i=2, 3, \, \text{and C is an arbitrary constant for}\,i=1.
\]
\item If \eqref{equ:TheoremDSI} is solvable in  $X_4$, then the solution $\psi_4(x)$ is unique and is given by
\begin{equation}
\begin{aligned}
\psi_4(x)&=\frac{c_1f(x)}{c_1^2+c_2^2}-\\
&\frac{c_2}{\pi(c_1^2+c_2^2)}\int_a^b\left( \frac{x-a}{t-a}\right)^{1-n_a(X_4)-\frac{\theta}{2\pi}}\left(\frac{b-x}{b-t}\right)^{\frac{\theta}{2\pi}-n_b(X_4)} \frac{f(t)}{t-x} dt .
\end{aligned}
\end{equation}
 \end{enumerate}
\end{property}
\begin{remark}
	According to the statement of Property \ref{pro:SolvabilityofDSI}, it is worth noting that if a solution of \eqref{equ:TheoremDSI} belongs to the space $X_4$, then it also lies in $X_i$ $(i=1,2,3)$ and therefore it has four equivalent representations which differ only in form. 
This property is  a special case of Theorem 30.2 on page 609 in \cite{MR1347689} by letting $Z_0(x)=1$, $a_1(x)=c_1$ and $a_2(x)=c_2$ in our case. And in part $(2)$ we  omitted the sufficient and necessary condition for  \eqref{equ:TheoremDSI} to be solvable in  $X_4$ since we shall not need it.  
\end{remark}

\subsection{Fractional Sobolev spaces and properties}
It is known that  there are various ways to define fractional Sobolev spaces, which are essentially equivalent but serve as convenient tools for deriving various properties under different contexts. On the whole real axis, one way is in terms of the Fourier transform as follows.

\begin{definition}\label{thm:FTHsR}
Given $0\leq s$, let
\begin{equation}
\widehat{H}^s(\mathbb{R}) := \left \{ w \in L^2(\mathbb{R}) : \int_{\mathbb{R}} (1 + |2\pi\xi|^{2s}) |\widehat{w}(\xi) |^2 \, {\rm d} \xi < \infty \right \}.
\end{equation}

It is endowed with semi-norm and norm
\[
|w|_{\widehat{H}^s(\mathbb{R})}:=\|(2\pi\xi)^s \widehat{w}\|_{L^2(\mathbb{R})},
\|w\|_{\widehat{H}^s (\mathbb{R})}:=\left(\|w\|^2_{L^2(\mathbb{R})} +|w|^2_{\widehat{H}^s(\mathbb{R})}\right)^{1/2}.
\]
\end{definition}
Another equivalent definition is achieved with the aid of left or right fractional-order weak derivative, which is a generalization of integer-order weak derivative:
\begin{definition}(\cite{glsima18}, Section 3)\label{Equivalent-definition}
	Given $0\leq s$ and assume $u(x)\in L^2(\mathbb{R})$, then  $u(x) \in \widehat{H}^s(\mathbb{R})$ if and only if there exists a unique $\psi_1(x) \in L^2(\mathbb{R})$  such that 
	\begin{equation}
	\int_\mathbb{R} u \cdot\boldsymbol{D}^s \psi =\int_\mathbb{R} \psi_1 \cdot \psi\quad 
	\end{equation}
	for any $\psi\in C^\infty_0(\mathbb{R})$.\\
	 Similarly, \\
	$u(x) \in \widehat{H}^s(\mathbb{R})$ if and only if there exists a unique $\psi_2(x) \in L^2(\mathbb{R})$  such that 
	\begin{equation}
 \int_\mathbb{R} u \cdot\boldsymbol{D}^{s*} \psi =\int_\mathbb{R} \psi_2 \cdot \psi
	\end{equation}
	for any $\psi\in C^\infty_0(\mathbb{R})$.
\end{definition}
\noindent
With above definitions, the following property can be deduced, which guarantees the existence of fractional derivatives and provides equivalent semi-norm and norm.
\begin{property}(\cite{glsima18}, Section 3)\label{weak-equivalence-norm}
	Assume $u\in \widehat{H}^s(\mathbb{R})$, $s\geq0$, then $\boldsymbol{D}^s u, \boldsymbol{D}^{s*}u$ exist a.e. and
	\begin{equation}
	|u|_{\widehat{H}^s(\mathbb{R})} = \|\boldsymbol{D}^s u\|_{L^2(\mathbb{R})}=\|\boldsymbol{D}^{s*}u\|_{L^2(\mathbb{R})}.
	\end{equation}
\end{property}
 Since in this work, we will mainly care about the fractional equation in finite domain, by restricting to the  bounded interval we can define the following analogue.
\begin{definition}\label{def:ForInterval}
	Given $0\leq s$.
	\begin{equation}
	\widehat{H}^s_0(\Omega):=\{\text{Closure of}\, \, u\in C_0^\infty(\Omega) \, \text{with respect to norm}\, \|\tilde{u}\|_{\widehat{H}^s(\mathbb{R})} \},
	\end{equation}
	where notation $\tilde{u}$ denotes the extension of $u(x)$ by $0$ outside $\Omega$. It is endowed with semi-norm and norm
	\[
	|u|_{\widehat{H}^s_0(\Omega)}:=|\tilde{u}|_{\widehat{H}^s(\mathbb{R})},
	\|u\|_{\widehat{H}^s_0(\Omega)}:=\|\tilde{u}\|_{\widehat{H}^s(\mathbb{R})}.
	\]
\end{definition}
It is well-known that $\widehat{H}^s(\mathbb{R})$ is a Hilbert space and so is $\widehat{H}^s_0(\Omega)$.
%

%
%
We shall also utilize another two useful facts:
\begin{property}\label{pro:alternate-form}
	Given $\frac{1}{2}<s<1$, then $u\in \widehat{H}^s_0(\Omega)$ can be represented as
	\begin{equation}
	u(x)={_aD_x^{-s}\psi_1}={_xD_b^{-s}}\psi_2,
	\end{equation}
	for certain   $\psi_1$, $\psi_2 \in L^2(\Omega)$. As a consequence, ${_aD_x^s}u$ and ${_xD_b^s}u$ exist a.e. and coincide with $\psi_1$, $\psi_2$, respectively.
\end{property}

\begin{property}\label{normproperty}
Given $1/2<s<1$, $g(x)\in C^1(\overline{\Omega})$, then there exists a positive constant $C$ such that
\begin{equation}
\tilde{g}\tilde{u}\in \widehat{H}^s(\mathbb{R})\quad \text{and}\quad
\|\tilde{g}\tilde{u}\|_{\widehat{H}^s(\mathbb{R})}\leq  C \|\tilde{u}\|_{\widehat{H}^s(\mathbb{R})}
\end{equation}
for any $u(x)\in \widehat{H}^s_0(\Omega)$. (Notation $\tilde{\cdot}$ denotes the extension by zero outside  $\Omega$.)
\end{property}
%
%
%
%
%
%
%
%
%
%
%
%
%
%
%
%
%
\section{Variational Formulation}\label{sec:variationalformulation}
Recall the conditions
	\begin{equation}\label{condition}
\begin{cases}
0<\alpha, \beta <1, \alpha+ \beta=1, 0<\mu<1,\\
f(x)\in H^*(\Omega), p(x),q(x), k(x)\in C^1(\overline{\Omega}), \\
p'(x), k'(x) \in H(\overline{\Omega}),\\
k(x)>0 \, \text{on}\, \overline{\Omega}, \frac{q}{k}-\frac{1}{2}(\frac{p}{k})'\geq 0 \, \text{on}\, \overline{\Omega},\\
\pi(1-\mu^2)\cot((1+\mu)\pi/2) +4(b-a)\|\frac{k'}{k}\|_{L^\infty(\Omega)}<0,
\end{cases}
\end{equation}
and  the problem
\begin{equation}\label{eq2}
\begin{cases}
[L(u)](x)= f(x),\, x\in \Omega=(a,b),\\
u(a)=u(b)=0,\\
[L(u)](x) := -Dk(x)(\alpha\, {_aD_x^{-(1-\mu)}}+\beta\,{_xD_b^{-(1-\mu)}})Du\\
\qquad \qquad+p(x)Du+q(x)u(x).
\end{cases}
\end{equation}

This section is to set up a proper variational formulation for  problem~\eqref{eq2}, so that we can establish the existence of a weak solution and do not require raising any regularity at this stage. To do so,  we begin with considering the operator $\tilde{L}$:
\begin{equation}
[\tilde{L}(u)](x) := -Dk(x)D(\alpha\, {_aD_x^{-(1-\mu)}}+\beta\,{_xD_b^{-(1-\mu)}})u+p(x)Du+q(x)u(x).
\end{equation}
(note the difference between operators $L$ and $\tilde{L}$)
and construct the suitable bilinear form in Definition~\ref{def-bilinear}, which is obtained from the left-hand side of \begin{equation}
\int_\Omega \frac{[\tilde{L}(u)](t)}{k(t)}\, v(t)\, dt=\int_\Omega \frac{f(t)}{k(t)} \, v(t)\, dt, \, v(t)\in C_0^\infty(\Omega). 
\end{equation}

 Only later,  we take care to raise the regularity to convert  the weak solution to the true solution and operator $\tilde{L}$ will also automatically become $L$.

\vspace{0.2cm}
Denote $s=(1+\mu)/2, s'=(1-\mu)/2$ throughout this section (Section \ref{sec:variationalformulation}).
\begin{definition}\label{def-bilinear}
Define  the bilinear form $B_2[\cdot ,\cdot ]$ on  the space $\widehat{H}^s_0(\Omega)$  as 
\begin{equation}
\begin{aligned}
B_2[u,v]&:=-\alpha ({_aD_x^s}u,{_xD_b^s}v)_\Omega-\beta({_xD_b^s}u, {_aD_x^s}v)_\Omega\\
&-\alpha(\frac{k'}{k}{_aD_x^\mu} u, v)_\Omega+\beta(\frac{k'}{k}{_xD_b^\mu} u, v)_\Omega\\
&+({_aD_x^{s'}}u, {_xD_b^s}(\frac{p}{k}v))_\Omega  +(\frac{q}{k}u,v)_{\Omega},
\end{aligned}
\end{equation}
for $u,v\in \widehat{H}^s_0(\Omega)$.
\end{definition}

Each term is well-defined under conditions~\eqref{condition} by paying an attention to that  ${_xD_b^s}(\frac{p}{k}v)$ exists a.e. and  belongs to  $L^2(\Omega)$ by Property \ref{normproperty} and Property \ref{weak-equivalence-norm}.
\begin{lemma}\label{Lem:Boundedness}
Under conditions~\eqref{condition}, there exist positive constants $Q_1,Q_2$ such that
\begin{equation}\label{ine:1}
|B_2[u,v]|\leq Q_1\|u\|_{\widehat{H}^s_0(\Omega)}\|v\|_{\widehat{H}^s_0(\Omega)}
\end{equation}
and
\begin{equation}\label{ine:2}
B_2[u,u]\geq Q_2\|u\|_{\widehat{H}^s_0(\Omega)}^2
\end{equation}
for all $u,v\in \widehat{H}^s_0(\Omega)$.
\end{lemma}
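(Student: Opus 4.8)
The plan is to establish the boundedness inequality \eqref{ine:1} and the coercivity inequality \eqref{ine:2} separately, treating the six terms of $B_2[u,v]$ one by one and then assembling the estimates.

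For the boundedness \eqref{ine:1}, I would bound each of the six terms by $C\|u\|_{\widehat H^s_0(\Omega)}\|v\|_{\widehat H^s_0(\Omega)}$. The first two terms $(\,{_aD_x^s}u,{_xD_b^s}v)_\Omega$ and $({_xD_b^s}u,{_aD_x^s}v)_\Omega$ are handled by Cauchy–Schwarz together with Property~\ref{weak-equivalence-norm} (restricted to $\Omega$ via the zero extension), which identifies $\|{_aD_x^s}u\|_{L^2(\Omega)}$ and $\|{_xD_b^s}u\|_{L^2(\Omega)}$ with $|u|_{\widehat H^s_0(\Omega)}$. The two advection-type terms $(\frac{k'}{k}{_aD_x^\mu}u,v)_\Omega$ and $(\frac{k'}{k}{_xD_b^\mu}u,v)_\Omega$: here $\frac{k'}{k}\in L^\infty(\Omega)$ since $k\in C^1(\overline\Omega)$ and $k>0$, so these are controlled by $\|\frac{k'}{k}\|_{L^\infty}\|{_aD_x^\mu}u\|_{L^2(\Omega)}\|v\|_{L^2(\Omega)}$; and $\|{_aD_x^\mu}u\|_{L^2(\Omega)}\le |u|_{\widehat H^{\mu/2}_0(\Omega)}\lesssim\|u\|_{\widehat H^s_0(\Omega)}$ using Property~\ref{pro:alternate-form}/Property~\ref{weak-equivalence-norm} (noting $\mu<s$ so $\widehat H^s_0\hookrightarrow\widehat H^{\mu/2}_0$ is not quite what is needed — rather, writing ${_aD_x^\mu}u={_aD_x^{s'}}({_aD_x^s}u)$ by composition of Riemann–Liouville integrals and using Property~\ref{bounded-property} to bound ${_aD_x^{s'}}$ on $L^2$). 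For the term $({_aD_x^{s'}}u,{_xD_b^s}(\frac{p}{k}v))_\Omega$, I use Cauchy–Schwarz, Property~\ref{bounded-property} for $\|{_aD_x^{s'}}u\|_{L^2(\Omega)}\le C\|u\|_{L^2(\Omega)}$, and the crucial Property~\ref{normproperty} applied to $g=\frac p k\in C^1(\overline\Omega)$ to get $\|{_xD_b^s}(\frac p k v)\|_{L^2(\Omega)}=|\widetilde{\tfrac p k v}|_{\widehat H^s(\mathbb R)}\le C\|v\|_{\widehat H^s_0(\Omega)}$. The last term $(\frac q k u,v)_\Omega$ is trivial since $\frac q k\in C(\overline\Omega)\subset L^\infty$. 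Summing gives \eqref{ine:1}.

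For the coercivity \eqref{ine:2}, set $v=u$. The reaction term contributes $(\frac q k u,u)_\Omega\ge 0$ by the sign condition $\frac q k-\frac12(\frac p k)'\ge0$ (modulo the advection contribution below). The advection term $({_aD_x^{s'}}u,{_xD_b^s}(\frac p k u))_\Omega$: I would integrate by parts / use the adjoint relation to rewrite it, producing a term $\frac12\int_\Omega(\frac p k)'u^2$ plus a skew-symmetric remainder, so that combined with $(\frac q k u,u)$ it yields the nonnegative quantity $\int_\Omega(\frac q k-\frac12(\frac p k)')u^2\ge0$. The remaining two advection-coupling terms $-\alpha(\frac{k'}{k}{_aD_x^\mu}u,u)_\Omega+\beta(\frac{k'}{k}{_xD_b^\mu}u,u)_\Omega$ must be absorbed: bound them by $2\|\frac{k'}{k}\|_{L^\infty}\cdot\|{_aD_x^{s'}}({_aD_x^s}u)\|_{L^2}\|u\|_{L^2}$, and use the sharp bound $\|u\|_{L^2(\Omega)}=\|{_aD_x^{-s}}({_aD_x^s}u)\|_{L^2}\le\frac{(b-a)^{s}}{s\Gamma(s)}\|{_aD_x^s}u\|_{L^2}$ together with a similar bound for the ${_aD_x^{s'}}$ factor, eventually comparing against the principal part. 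The principal part $-\alpha({_aD_x^s}u,{_xD_b^s}u)_\Omega-\beta({_xD_b^s}u,{_aD_x^s}u)_\Omega=-({_aD_x^s}u,{_xD_b^s}u)_\Omega$ (since $\alpha+\beta=1$): extend $u$ by zero and apply Property~\ref{thm:symmetry} to the pair $\widetilde u,\widetilde u$ on $\mathbb R$, which gives $-(\,\boldsymbol D^s\widetilde u,\boldsymbol D^{s*}\widetilde u)_{\mathbb R}=-\cos(s\pi)|\widetilde u|^2_{\widehat H^s(\mathbb R)}=-\cos((1+\mu)\pi/2)|u|^2_{\widehat H^s_0(\Omega)}$, and $-\cos((1+\mu)\pi/2)>0$ since $1/2<s<1$. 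Thus the principal part is coercive with constant $-\cos((1+\mu)\pi/2)$; the last condition in \eqref{condition}, namely $\pi(1-\mu^2)\cot((1+\mu)\pi/2)+4(b-a)\|\frac{k'}{k}\|_{L^\infty}<0$, is precisely what guarantees that the advection-coupling terms are dominated by the principal part after inserting the explicit constants $\frac{(b-a)^{s}}{s\Gamma(s)}$, $\frac{(b-a)^{s'}}{s'\Gamma(s')}$ and the identity $\Gamma(s)\Gamma(s')s s' = \Gamma(1+s)\Gamma(1+s') $ combined with the reflection formula for $\Gamma$ (which produces the $\cot$ and the $(1-\mu^2)$ factor). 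Using the Poincaré-type inequality $\|u\|_{L^2(\Omega)}\le C|u|_{\widehat H^s_0(\Omega)}$ to upgrade the semi-norm control to full-norm control finishes \eqref{ine:2}.

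The main obstacle is the coercivity estimate, specifically making the algebra of constants work out so that the fifth condition in \eqref{condition} is exactly the threshold: one must track the sharp $L^2\!\to\!L^2$ operator norms of ${_aD_x^{-s}}$ and ${_aD_x^{-s'}}$ from Property~\ref{bounded-property}, combine $s+s'=1$ with the Gamma reflection formula $\Gamma(s)\Gamma(1-s)=\pi/\sin(s\pi)$ to convert the product of these norms into the trigonometric expression $\pi(1-\mu^2)\cot((1+\mu)\pi/2)$ appearing in \eqref{condition}, and verify that the remainder absorbed into the reaction term is genuinely nonnegative under $\frac q k-\frac12(\frac p k)'\ge0$ — the integration-by-parts step for $({_aD_x^{s'}}u,{_xD_b^s}(\frac p k u))_\Omega$ requires care since $u$ only lies in $\widehat H^s_0(\Omega)$, so I would first prove the identity for $u\in C_0^\infty(\Omega)$ and then pass to the limit using the boundedness already established.
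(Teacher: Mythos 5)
Your proposal follows essentially the same route as the paper: term-by-term Cauchy--Schwarz bounds plus Property~\ref{normproperty} for the boundedness \eqref{ine:1}; integration by parts reducing $({_aD_x^{s'}}u,{_xD_b^s}(\frac pk u))_\Omega$ to $-\frac12((\frac pk)'u,u)_\Omega$ so that the sign condition on $\frac qk-\frac12(\frac pk)'$ applies; Property~\ref{thm:symmetry} giving the principal coercivity constant $-\cos(s\pi)$; the sharp constants of Property~\ref{bounded-property} combined with $\Gamma(s)\Gamma(1-s)=\pi/\sin(s\pi)$ to absorb the $k'/k$ terms under the fifth condition of \eqref{condition}; and a density argument from $C_0^\infty(\Omega)$. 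One intermediate claim is wrong as written: $\|{_aD_x^{s'}}u\|_{L^2(\Omega)}\le C\|u\|_{L^2(\Omega)}$ does not follow from Property~\ref{bounded-property}, because in this paper's notation ${_aD_x^{s'}}$ with $s'>0$ is a fractional \emph{derivative}, which cannot be controlled by $\|u\|_{L^2}$; the correct (and sufficient) step is ${_aD_x^{s'}}u={_aD_x^{-\mu}}({_aD_x^{s}}u)$, whence $\|{_aD_x^{s'}}u\|_{L^2(\Omega)}\le\frac{(b-a)^\mu}{\mu\Gamma(\mu)}\|{_aD_x^{s}}u\|_{L^2(\Omega)}\le C\|u\|_{\widehat H^s_0(\Omega)}$, exactly as the paper does. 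The same sign-of-exponent slip appears in your formula ${_aD_x^\mu}u={_aD_x^{s'}}({_aD_x^{s}}u)$, which should read ${_aD_x^{-s'}}({_aD_x^{s}}u)$ since $\mu=s-s'$; with these corrections the argument is the paper's.
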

\begin{proof}

1. Let us  first prove \eqref{ine:1} and \eqref{ine:2} for all $u,v\in C_0^\infty(\Omega)$.

Assume $u,v\in C_0^\infty(\Omega)$. 

From Definition~\ref{def-bilinear} we readily check 
\begin{equation}
\begin{aligned}
|B_2[u,v]|&\leq \alpha\|{_aD_x^s}u\|_{L^2(\Omega)}\|{_xD_b^s}v\|_{L^2(\Omega)}+\beta\|{_xD_b^s}u\|_{L^2(\Omega)}\|{_aD_x^s}v\|_{L^2(\Omega)}\\
&+\alpha\|\frac{k'}{k}\|_{L^\infty(\Omega)}\|{_aD_x^\mu} u\|_{L^2(\Omega)}\|v\|_{L^2(\Omega)}+\beta\|\frac{k'}{k}\|_{L^\infty(\Omega)}\|{_xD_b^\mu} u\|_{L^2(\Omega)}\|v\|_{L^2(\Omega)}\\
&+|({_aD_x^{s'}}u, {_xD_b^s}(\frac{p}{k}v))_\Omega| +\|\frac{q}{k}\|_{L^\infty(\Omega)}\|u\|_{L^2(\Omega)}\|v\|_{L^2(\Omega)}.
\end{aligned}
\end{equation}
Now we examine little pieces above separately.
\begin{equation}
\begin{aligned}
&\alpha\|{_aD_x^s}u\|_{L^2(\Omega)}\|{_xD_b^s}v\|_{L^2(\Omega)}+\beta\|{_xD_b^s}u\|_{L^2(\Omega)}\|{_aD_x^s}v\|_{L^2(\Omega)}\\
&\leq \alpha\|u\|_{\widehat{H}^s_0(\Omega)}\|v\|_{\widehat{H}^s_0(\Omega)}+\beta\|u\|_{\widehat{H}^s_0(\Omega)}\|v\|_{\widehat{H}^s_0(\Omega)}\\
&=\|u\|_{\widehat{H}^s_0(\Omega)}\|v\|_{\widehat{H}^s_0(\Omega)},
\end{aligned}
\end{equation}
\begin{equation}
\begin{aligned}
&\alpha\|\frac{k'}{k}\|_{L^\infty(\Omega)}\|{_aD_x^\mu} u\|_{L^2(\Omega)}\|v\|_{L^2(\Omega)}+\beta\|\frac{k'}{k}\|_{L^\infty(\Omega)}\|{_xD_b^\mu} u\|_{L^2(\Omega)}\|v\|_{L^2(\Omega)}\\
&\leq \alpha\|\frac{k'}{k}\|_{L^\infty(\Omega)}\|u\|_{\widehat{H}^s_0(\Omega)}\|v\|_{\widehat{H}^s_0(\Omega)}+\beta\|\frac{k'}{k}\|_{L^\infty(\Omega)}\|u\|_{\widehat{H}^s_0(\Omega)}\|v\|_{\widehat{H}^s_0(\Omega)}\\
&=\|\frac{k'}{k}\|_{L^\infty(\Omega)}\|u\|_{\widehat{H}^s_0(\Omega)}\|v\|_{\widehat{H}^s_0(\Omega)},
\end{aligned}
\end{equation}
and 
\begin{equation}
\begin{aligned}
&|({_aD_x^{s'}}u, {_xD_b^s}(\frac{p}{k}v))_\Omega| +\|\frac{q}{k}\|_{L^\infty(\Omega)}\|u\|_{L^2(\Omega)}\|v\|_{L^2(\Omega)}\\
&= |({_aD_x^{-\mu}}{_aD_x^s}u, {_xD_b^s}(\frac{p}{k}v))_\Omega|+\|\frac{q}{k}\|_{L^\infty(\Omega)}\|u\|_{L^2(\Omega)}\|v\|_{L^2(\Omega)}\\
&\leq \dfrac{(b-a)^\mu}{\mu\Gamma(\mu)} ||{_aD_x^s}u||_{L^2(\Omega)}\|{_xD_b^s}(\frac{p}{k}v)\|_{L^2(\Omega)}+\|\frac{q}{k}\|_{L^\infty(\Omega)}\|u\|_{L^2(\Omega)}\|v\|_{L^2(\Omega)}\\
 &(\text{using Property~\ref{bounded-property}})\\
&\leq C\|u\|_{\widehat{H}^s_0(\Omega)}\|v\|_{\widehat{H}^s_0(\Omega)}+\|\frac{q}{k}\|_{L^\infty(\Omega)}\|u\|_{\widehat{H}^s_0(\Omega)}\|v\|_{\widehat{H}^s_0(\Omega)} \\
 &(\text{using Property~\ref{normproperty}})\\
&=(C+\|\frac{q}{k}\|_{L^\infty(\Omega)})\|u\|_{\widehat{H}^s_0(\Omega)}\|v\|_{\widehat{H}^s_0(\Omega)}.
\end{aligned}
\end{equation}
Putting them together we obtain
\begin{equation}\label{equ:boundednessforB2}
|B_2[u,v]|\leq Q_1\|u\|_{\widehat{H}^s_0(\Omega)}\|v\|_{\widehat{H}^s_0(\Omega)},
\end{equation}
for some appropriate positive constant $Q_1$.\\
2. For \eqref{ine:2},  we consider $B_2[u,u]$. 

Simplifying the term $({_aD_x^{s'}}u, {_xD_b^s}(\frac{p}{k}u))_\Omega$,
\[
({_aD_x^{s'}}u, {_xD_b^s}(\frac{p}{k}u))_\Omega=(u', \frac{p}{k}u)_{\Omega}=-\frac{1}{2}((\frac{p}{k})'u,u)_\Omega,
\]
  and  we find that
\begin{equation}\label{Inequality-VF-0}
\begin{aligned}
B_2[u,u]&=-\alpha ({_aD_x^s}u,{_xD_b^s}u)_\Omega-\beta({_xD_b^s}u, {_aD_x^s}u)_\Omega\\
&-\alpha(\frac{k'}{k}{_aD_x^\mu} u, u)_\Omega+\beta(\frac{k'}{k}{_xD_b^\mu} u, u)_\Omega\\
&+((\frac{q}{k}-\frac{1}{2}(\frac{p}{k})') u,u)_\Omega.
\end{aligned}
\end{equation}

Again, we examine little pieces above separately.

First, using the second identity in Property~\ref{thm:symmetry}, we have
\begin{equation}
\begin{aligned}
&-\alpha ({_aD_x^s}u,{_xD_b^s}u)_\Omega-\beta({_xD_b^s}u, {_aD_x^s}u)_\Omega\\
&=-\alpha(\boldsymbol{D}^{s} \tilde{u}, \boldsymbol{D}^{s*} \tilde{u} )_{\mathbb{R}}-\beta(\boldsymbol{D}^{s*} \tilde{u} , \boldsymbol{D}^{s} \tilde{u} )_{\mathbb{R}}\\
& \text{($\tilde{u}(x) $ is the extension of $u(x)$ by $0$ outside $\Omega$)}\\
&=-\alpha\cos(s \pi)(\boldsymbol{D}^{s} u , \boldsymbol{D}^{s} u )_{\mathbb{R}}-\beta \cos(s \pi)(\boldsymbol{D}^{s} u , \boldsymbol{D}^{s} u )_{\mathbb{R}}\\
&=-\cos (s\pi)|u|^2_{\widehat{H}^s_0(\Omega)}.
\end{aligned}
\end{equation}
Second,
\begin{equation}\label{Inequality-VF-1}
\begin{aligned}
&|-\alpha(\frac{k'}{k}{_aD_x^\mu} u, u)_\Omega+\beta(\frac{k'}{k}{_xD_b^\mu} u, u)_\Omega| \\
&\leq \|\frac{k'}{k}\|_{L^\infty(\Omega)}\|\alpha{_aD_x^\mu} u-\beta{_xD_b^\mu} u\|_{L^2(\Omega)}\|u\|_{L^2(\Omega)}.
\end{aligned}
\end{equation}
By using Minkowsky inequality and Property~\ref{bounded-property}, we deduce
\begin{equation}
\begin{aligned}
&\|\alpha{_aD_x^\mu} u-\beta{_xD_b^\mu} u\|_{L^2(\Omega)}\\
&\leq \|\alpha{_aD_x^\mu} u\|_{L^2(\Omega)}+\|\beta{_xD_b^\mu} u\|_{L^2(\Omega)}\\
&=\|\alpha{_aD_x^{-s'}}({_aD^s_x}u)\|_{L^2(\Omega)}+\|\beta {_xD_b^{-s'}}({_xD^s_b}u)\|_{L^2(\Omega)}\\
&\leq  \dfrac{\alpha(b-a)^{s'}}{s'\Gamma(s')}\|{_aD^s_x}u\|_{L^2(\Omega)}+ \dfrac{\beta(b-a)^{s'}}{s'\Gamma(s')} \|{_xD^s_b}u\|_{L^2(\Omega)}\\
&\leq \dfrac{(b-a)^{s'}}{s'\Gamma(s')}|u|_{\widehat{H}^s_0(\Omega)},
\end{aligned}
\end{equation}
and 
\begin{equation}
\begin{aligned}
\|u\|_{L^2(\Omega)}&=\|{_aD_x^{-s}}({_aD_x^s u})\|_{L^2(\Omega)}\\
&\leq \dfrac{(b-a)^s}{s\cdot\Gamma(s)}\|{_aD_x^s u}\|_{L^2(\Omega)}\\
&\leq \dfrac{(b-a)^s}{s\cdot\Gamma(s)}|u|_{\widehat{H}^s_0(\Omega)}.
\end{aligned}
\end{equation}
Thus, inequality~\eqref{Inequality-VF-1} further becomes
\begin{equation}\label{inequality-combine}
|-\alpha(\frac{k'}{k}{_aD_x^\mu} u, u)_\Omega+\beta(\frac{k'}{k}{_xD_b^\mu} u, u)_\Omega|\leq \|\frac{k'}{k}\|_{L^\infty(\Omega)}\dfrac{b-a}{s's\Gamma(s')\Gamma(s)}|u|_{\widehat{H}^s_0(\Omega)}^2.
\end{equation}

Utilizing this inequality, \eqref{Inequality-VF-0} now becomes
\begin{equation}
\begin{aligned}
B_2[u,u]&\geq \left(-\cos (s\pi) -\|\frac{k'}{k}\|_{L^\infty(\Omega)}\dfrac{b-a}{s's\Gamma(s')\Gamma(s)}\right)|u|_{\widehat{H}^s_0(\Omega)}^2 \\
&+((\frac{q}{k}-\frac{1}{2}(\frac{p}{k})') u,u)_\Omega\\
&=\left(-\cos (s\pi) -\|\frac{k'}{k}\|_{L^\infty(\Omega)}\dfrac{(b-a)\sin(s\pi)}{\pi s's}\right)|u|_{\widehat{H}^s_0(\Omega)}^2 \\
&+((\frac{q}{k}-\frac{1}{2}(\frac{p}{k})') u,u)_\Omega\\
&\text{(applying the formula $\Gamma(z)\Gamma(1-z)=\frac{\pi}{\sin(z\pi)}$, $z$ is not an integer)}.
\end{aligned}
\end{equation}
In view of  the last two conditions in \eqref{condition} and the fact that the norm $\|\cdot\|_{\widehat{H}^s_0(\Omega)}$ and the semi-norm $|\cdot|_{\widehat{H}^s_0(\Omega)}$ are equivalent, we obtain
\begin{equation}\label{equ:coervicityforB2}
B_2[u,u]\geq Q_2 \|u\|_{\widehat{H}^s_0(\Omega)}^2,
\end{equation}
for some appropriate positive constant $Q_2$.

3. Last, let us consider the general case for $u, v\in \widehat{H}^s_0(\Omega)$.

we claim that for any $u, v\in \widehat{H}^s_0(\Omega)$ there exist Cauchy sequences $\{u_n\}$, $\{v_n\}\subset C_0^\infty(\Omega)$ in  $\widehat{H}^s_0(\Omega)$ such that 
\begin{equation}\label{dequ:limit}
B_2[u,v]=\lim_{n\rightarrow \infty} B_2[u_n,v_n].
\end{equation}

To see this, assume $u, v\in \widehat{H}^s_0(\Omega)$, 	since $C_0^\infty(\Omega)$ is dense in $\widehat{H}^s_0(\Omega)$,  there exist Cauchy sequences $\{u_n\}, \{v_n\}\subset C_0^\infty(\Omega)$  such that 
\[
\lim_{n\rightarrow \infty} u_n=u, \lim_{n\rightarrow \infty} v_n=v
\]
with respect to $\|\cdot\|_{\widehat{H}^s_0(\Omega)}$.	

From Definition~\ref{def-bilinear} it is readily verified that 
\[
\begin{aligned}
 &\lim_{n\rightarrow \infty}-\alpha ({_aD_x^s}u_n,{_xD_b^s}v_n)_\Omega=   -\alpha ({_aD_x^s}u,{_xD_b^s}v)_\Omega,\\
 &\lim_{n\rightarrow \infty} -\beta({_xD_b^s}u_n, {_aD_x^s}v_n)_\Omega=-\beta({_xD_b^s}u, {_aD_x^s}v)_\Omega,\\
&\lim_{n\rightarrow \infty}-\alpha(\frac{k'}{k}{_aD_x^\mu} u_n, v_n)_\Omega=-\alpha(\frac{k'}{k}{_aD_x^\mu} u, v)_\Omega,\\
&\lim_{n\rightarrow \infty}\beta(\frac{k'}{k}{_xD_b^\mu} u_n, v_n)_\Omega=\beta(\frac{k'}{k}{_xD_b^\mu} u, v)_\Omega,\\
& \lim_{n\rightarrow \infty}(\frac{q}{k}u_n,v_n)_{\Omega}=(\frac{q}{k}u,v)_{\Omega}.
\end{aligned}
\]

For the term  $({_aD_x^{s'}}u, {_xD_b^s}(\frac{p}{k}v))_\Omega $, 
\[
\begin{aligned}
&|({_aD_x^{s'}}u_n, {_xD_b^s}(\frac{p}{k}v_n))_\Omega-({_aD_x^{s'}}u, {_xD_b^s}(\frac{p}{k}v))_\Omega|\\
&=|({_aD_x^{s'}}(u_n-u), {_xD_b^s}(\frac{p}{k}v_n))_\Omega-({_aD_x^{s'}}u, {_xD_b^s}(\frac{p}{k}v_n-\frac{p}{k}v))_\Omega|\\
& \leq \| {_aD_x^{s'}}(u_n-u)\|_{L^2(\Omega)}  \,\|  {_xD_b^s}(\frac{p}{k}v_n)\|_{L^2(\Omega)}+\| {_aD_x^{s'}}u\|_{L^2(\Omega)}  \,\|  {_xD_b^s}(\frac{p}{k}v_n-\frac{p}{k}v)\|_{L^2(\Omega)}\\
&\leq C_1  \|u_n-u\|_{\widehat{H}_0^s(\Omega)} \|v_n\|_{\widehat{H}_0^s(\Omega)}+C_2 \|u\|_{\widehat{H}_0^s(\Omega)} \|v_n-v\|_{\widehat{H}_0^s(\Omega)} \\
&\quad \text{(using Property \ref{normproperty} and noting $s'<s$)},\\
\end{aligned}
\]
for some positive constants $C_1, C_2$  not depending on $n$, therefore 
\[
\lim_{n\rightarrow \infty}({_aD_x^{s'}}u_n, {_xD_b^s}(\frac{p}{k}v_n))_\Omega=({_aD_x^{s'}}u, {_xD_b^s}(\frac{p}{k}v))_\Omega ,
\] 
and we see \eqref{dequ:limit}.

As a result,  it follows from    \eqref{equ:boundednessforB2}, \eqref{equ:coervicityforB2} and \eqref{dequ:limit}  that \eqref{ine:1} and \eqref{ine:2} hold for all $u,v\in \widehat{H}^s_0(\Omega)$. This completes the whole proof.
\end{proof}
Once we have shown that  $B_2[u,v]$ is bounded and coercive on $\widehat{H}^s_0(\Omega)$, applying  the Lax-Milgram theorem gives the following existence of weak solution.
\begin{lemma}\label{lem:Existence-weak}
Under conditions~\eqref{condition}, there exists a unique element $u\in \widehat{H}^s_0(\Omega)$ such that
\begin{equation}\label{equ:variationalformulation}
 B_2[u,v]=(\frac{f}{k},v)_{\Omega}
\end{equation}
 for all $v\in \widehat{H}^s_0(\Omega)$, and there is a positive constant $C$ such that
\begin{equation}\label{equ:normestimation}
\|u\|_{\widehat{H}^s_0(\Omega)}\leq C\|{_aD_x^{-s}}\frac{f}{k}\|_{L^2(\Omega)}. 
\end{equation}
\end{lemma}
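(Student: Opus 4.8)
\textbf{Proof plan for Lemma~\ref{lem:Existence-weak}.}
The plan is to verify the hypotheses of the Lax--Milgram theorem on the Hilbert space $\widehat{H}^s_0(\Omega)$ and then deduce the stated norm estimate from the coercivity constant. First I would observe that by Lemma~\ref{Lem:Boundedness} the bilinear form $B_2[\cdot,\cdot]$ is bounded, $|B_2[u,v]|\leq Q_1\|u\|_{\widehat{H}^s_0(\Omega)}\|v\|_{\widehat{H}^s_0(\Omega)}$, and coercive, $B_2[u,u]\geq Q_2\|u\|^2_{\widehat{H}^s_0(\Omega)}$, with $Q_1,Q_2>0$. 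Next I would check that the right-hand side $v\mapsto (\tfrac{f}{k},v)_\Omega$ is a bounded linear functional on $\widehat{H}^s_0(\Omega)$. Here the only mild subtlety is that $f\in H^*(\Omega)$ need not lie in $L^2(\Omega)$, so I cannot estimate $(\tfrac{f}{k},v)_\Omega$ directly by Cauchy--Schwarz in $L^2$; instead I would shift a fractional integral onto $v$. Since $v\in\widehat{H}^s_0(\Omega)$ with $s>\tfrac12$, Property~\ref{pro:alternate-form} gives $v={_xD_b^{-s}}\psi$ for some $\psi\in L^2(\Omega)$ with $\|\psi\|_{L^2(\Omega)}\leq|v|_{\widehat{H}^s_0(\Omega)}$; then by the fractional integration-by-parts identity $(\tfrac{f}{k},v)_\Omega=({_aD_x^{-s}}\tfrac{f}{k},\psi)_\Omega$, and $\tfrac1k\in C^1(\overline\Omega)$ together with $f\in H^*(\Omega)$ ensures ${_aD_x^{-s}}\tfrac{f}{k}\in L^2(\Omega)$ (indeed, by Property~\ref{pro-correspondence-1}, ${_aD_x^{-s}}$ carries $H^*(\Omega)$ into $H^*_s(\Omega)\subset L^2(\Omega)$), so that
\[
\left|(\tfrac{f}{k},v)_\Omega\right|=\left|({_aD_x^{-s}}\tfrac{f}{k},\psi)_\Omega\right|\leq \|{_aD_x^{-s}}\tfrac{f}{k}\|_{L^2(\Omega)}\,\|v\|_{\widehat{H}^s_0(\Omega)}.
\]

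With boundedness of $B_2$, coercivity of $B_2$, and boundedness of the functional $v\mapsto(\tfrac{f}{k},v)_\Omega$ all in hand, the Lax--Milgram theorem yields a unique $u\in\widehat{H}^s_0(\Omega)$ satisfying $B_2[u,v]=(\tfrac{f}{k},v)_\Omega$ for all $v\in\widehat{H}^s_0(\Omega)$. For the quantitative estimate~\eqref{equ:normestimation}, I would test the equation against $v=u$: by coercivity $Q_2\|u\|^2_{\widehat{H}^s_0(\Omega)}\leq B_2[u,u]=(\tfrac{f}{k},u)_\Omega\leq\|{_aD_x^{-s}}\tfrac{f}{k}\|_{L^2(\Omega)}\|u\|_{\widehat{H}^s_0(\Omega)}$, and dividing by $\|u\|_{\widehat{H}^s_0(\Omega)}$ (trivial if $u=0$) gives $\|u\|_{\widehat{H}^s_0(\Omega)}\leq Q_2^{-1}\|{_aD_x^{-s}}\tfrac{f}{k}\|_{L^2(\Omega)}$, which is the claimed inequality with $C=1/Q_2$.

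The main obstacle is the verification that the linear functional is bounded, because the natural pairing $(\tfrac{f}{k},v)_\Omega$ involves $f\in H^*(\Omega)$, a class of functions with integrable endpoint singularities that is \emph{not} contained in $L^2(\Omega)$; the trick is precisely to move a fractional integral from $f/k$ onto $v$ (using that $v$, having order $s>\tfrac12$, is itself a fractional integral of an $L^2$ function), thereby converting the pairing into one between the genuinely $L^2$ object ${_aD_x^{-s}}\tfrac{f}{k}$ and an $L^2$ function. Everything else is a routine application of Lemma~\ref{Lem:Boundedness}, the fractional integration-by-parts formula, and Lax--Milgram. One should also record, for use in later sections, that the weak formulation~\eqref{equ:variationalformulation} can equivalently be written as $B_2[u,v]=({_aD_x^{-s}}\tfrac{f}{k},{_xD_b^s}v)_\Omega$; this reformulation is what makes the subsequent regularity-raising arguments tractable.
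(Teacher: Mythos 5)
Your proposal is correct and follows essentially the same route as the paper: Lax--Milgram on $\widehat{H}^s_0(\Omega)$ using the boundedness and coercivity from Lemma~\ref{Lem:Boundedness}, with the right-hand side handled by rewriting $(\tfrac{f}{k},v)_\Omega=({_aD_x^{-s}}\tfrac{f}{k},{_xD_b^s}v)_\Omega$ via fractional integration by parts and Property~\ref{pro:alternate-form}, and the estimate obtained by testing with $v=u$. The only (harmless) deviation is that you justify ${_aD_x^{-s}}\tfrac{f}{k}\in L^2(\Omega)$ through Property~\ref{pro-correspondence-1} and the embedding $H^*_s(\Omega)\subset L^2(\Omega)$ for $s>\tfrac12$, whereas the paper notes $\tfrac{f}{k}\in L^p(\Omega)$ for some $p>1$ and applies the $L^p\to L^q$ mapping Property~\ref{pro-mappings}.
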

\begin{proof}
	1. Since $f\in H^*(\Omega)$ and $k\in C^1(\overline{\Omega})$, there exists a certain $p>1$ such that $\frac{f}{k} \in L^p(\Omega)$. Hence ${_aD_x^{-s}}\frac{f}{k}\in L^2(\Omega)$ is guaranteed by Property \ref{pro-mappings}.

2. Define the linear functional $F$: $\widehat{H}^s_0(\Omega)\rightarrow \mathbb{R}$ as:
	\[
	F(v)=({_aD_x^{-s}}\frac{f}{k},{_xD_b^s}v)_{\Omega}.
	\]
	
	On one hand,
	\[
	({_aD_x^{-s}}\frac{f}{k},{_xD_b^s}v)_{\Omega}=(\frac{f}{k},{_xD_b^{-s}}{_xD_b^s}v)_{\Omega}=(\frac{f}{k},v)_{\Omega}
	\]
	by fractional integration by parts (eq. (2.20), p. 34, \cite{MR1347689}) and Property \ref{pro:alternate-form}.
	
	On the other hand,
	\begin{equation}\label{equ:inequality}
	|F(v)|\leq \|	{_aD_x^{-s}}\frac{f}{k}\|_{L^2(\Omega)}\|{_xD_b^s}v\|_{L^2(\Omega)}\leq  \|	{_aD_x^{-s}}\frac{f}{k}\|_{L^2(\Omega)} \|v\|_{\widehat{H}^s_0(\Omega)}
	\end{equation}
	by H$\ddot{\text{o}}$lder inequality. Thereby,  $F(\cdot)$ is bounded on $\widehat{H}^s_0(\Omega)$.
	
3. 
	\eqref{equ:variationalformulation} follows immediately from the Lax-Milgram theorem and  Lemma \ref{Lem:Boundedness}, and  the estimation \eqref{equ:normestimation} follows from \eqref{equ:inequality} and \eqref{ine:2}.
\end{proof}
\section{Relationship between generalized Abel equations with constant coefficients}\label{sec:GeneralizedAbelequations}
In this section, we investigate the relationship between solutions of generalized Abel equations with constant coefficients. More precisely, we will establish in Lemma \ref{lem:solution-derivative} the relationship between $u$ and $v$, where $u$, $v$ solve equations of the form
\begin{equation}
\begin{aligned}
\gamma_1{_aD^{-t}_x}u+\gamma_2 \,{_xD^{-t}_b}u=f,\quad\quad
\gamma_1{_aD^{-t}_x}v+\gamma_2\,{_xD^{-t}_b}v=Df.
\end{aligned}
\end{equation}
Before that, we shall need Lemma \ref{lem-solution-1} and \ref{lem-solution-2}.
\begin{lemma}\label{lem-solution-1}
Let  $0<t<1$, $0<\gamma_1, \gamma_2$. There exists a unique solution $u(x)\in H^*(\Omega) $ to the equation
\[
\gamma_1{_aD^{-t}_x}u+\gamma_2\,{_xD^{-t}_b}u=1, x\in \Omega,
\]
 and $u(x)=c\cdot(x-a)^p(b-x)^q$, where 
\[c=\Gamma(-q)\left(\gamma_1 (b-a)^{1+p+q}\Gamma(p+1)\int_a^b (x-a)^{-q-1}(b-x)^{-p-1}\, dx \right)^{-1}\] 
and $p,q$ are uniquely determined by conditions
\begin{equation}\label{conditionpq}
 p+q=-t \quad \text{and}\quad
 \gamma_1 \sin(q\pi)=\gamma_2\sin(p\pi).
\end{equation}
\end{lemma}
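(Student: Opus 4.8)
The plan is to treat this as an eigenfunction-type ansatz problem: try a solution of the form $u(x) = c\,(x-a)^p(b-x)^q$ and use Property~\ref{coupled-inte} to compute both ${_aD_x^{-t}}u$ and ${_xD_b^{-t}}u$ explicitly in terms of hypergeometric functions, then arrange the parameters $p,q$ so that the two hypergeometric contributions combine into a constant. First I would apply Property~\ref{coupled-inte} with $\sigma_2 = p+1$, $\sigma_3 = q+1$, $\sigma_1 = t$ to the function $\psi(x) = (x-a)^p(b-x)^q$, obtaining
\[
{_aD_x^{-t}}\psi = \frac{(b-a)^q\Gamma(p+1)}{\Gamma(t+p+1)}(x-a)^{t+p+q}\,{_2F_1}\!\bigl(-q,p+1,t+p+1;\tfrac{x-a}{b-a}\bigr),
\]
and, using the reflection Property~\ref{p-reflection} (i.e.\ applying the same formula to $\psi(a+b-x)$), the analogous expression for ${_xD_b^{-t}}\psi$ with $p,q$ interchanged and argument $\tfrac{b-x}{b-a}$. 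Since $t+p+q = 0$ when $p+q=-t$, the prefactor $(x-a)^{t+p+q}$ and its counterpart become constants, so $\gamma_1{_aD_x^{-t}}\psi + \gamma_2{_xD_b^{-t}}\psi$ is a sum of two hypergeometric functions of $\tfrac{x-a}{b-a}$ and $\tfrac{b-x}{b-a}$ respectively.

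Next I would impose that this sum be constant. The natural mechanism is a connection formula: Property~\ref{pro:Euler} (Euler's transformation) turns ${_2F_1}(-q,p+1,t+p+1;z)$, which has exponent structure at $z=1$ governed by $t+p+1-(-q)-(p+1) = t+q = -p$, into $(1-z)^{-p}{_2F_1}(\cdot)$, and one hopes the two pieces are proportional after this manipulation. More precisely, I expect that with $z = \tfrac{x-a}{b-a}$ one has $1-z = \tfrac{b-x}{b-a}$, and after Euler's transformation both terms carry the same hypergeometric factor up to the sign condition $\gamma_1\sin(q\pi) = \gamma_2\sin(p\pi)$; this trigonometric relation is exactly what makes the residual non-constant part cancel. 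I would verify that the system $p+q=-t$, $\gamma_1\sin(q\pi)=\gamma_2\sin(p\pi)$ has a unique solution with $-1<p,q<0$ (so that $u \in H^*(\Omega)$, i.e.\ $u \in H_0^\lambda(\epsilon_1,\epsilon_2)$ for suitable $\epsilon_1 = p+1, \epsilon_2 = q+1 >0$): substituting $q = -t-p$ into the second equation gives a continuous function of $p$ on $(-t,0)$ that changes sign (it is $\gamma_1\sin((t+p)\pi) - \gamma_2\sin(-p\pi)$, which at the endpoints has opposite signs given $\gamma_1,\gamma_2>0$ and $0<t<1$) and is strictly monotone there, yielding existence and uniqueness of $p$, hence of $q$.

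Then uniqueness of the solution in $H^*(\Omega)$ follows directly from Property~\ref{Existence-Abel}: the operator $\gamma_1{_aD_x^{-t}} + \gamma_2{_xD_b^{-t}}$ is one-to-one from $H^*(\Omega)$ onto $H^*_t(\Omega)$, and the constant function $1$ lies in $H^*_t(\Omega)$ (indeed $1 = {_aD_x^{-t}}\phi$ for a suitable $\phi\in H^*(\Omega)$ by Property~\ref{pro-correspondence-1}, or directly $1\in H_0^\lambda(\epsilon_1,\epsilon_2)$ for small $\epsilon_i$), so the solution exists and is unique. Finally, to pin down the constant $c$, I would evaluate the identity $\gamma_1{_aD_x^{-t}}u + \gamma_2{_xD_b^{-t}}u = 1$ in a convenient limit — the cleanest is $x\to b$, where ${_xD_b^{-t}}u \to 0$ (the integrand has an integrable singularity but the interval shrinks) while ${_aD_x^{-t}}u$ tends to $\tfrac{1}{\Gamma(t)}\int_a^b (b-s)^{t-1}u(s)\,ds$; setting this equal to $1/\gamma_1$ and plugging in $u(s) = c(s-a)^p(b-s)^q$ gives $\gamma_1 c\,\Gamma(t)^{-1}\int_a^b (b-s)^{t-1+q}(s-a)^p\,ds = 1$, which after the change of variables producing a Beta integral, together with $t+q = -p$ and the reflection formula for $\Gamma$, rearranges to the stated formula for $c$ involving $\Gamma(-q)$, $\Gamma(p+1)$, $(b-a)^{1+p+q}$ and the integral $\int_a^b (x-a)^{-q-1}(b-x)^{-p-1}\,dx$.

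The main obstacle I anticipate is the hypergeometric bookkeeping in the middle step: showing rigorously that after applying Euler's transformation (Property~\ref{pro:Euler}) the two ${_2F_1}$ terms combine into a constant precisely under the condition $\gamma_1\sin(q\pi)=\gamma_2\sin(p\pi)$. This requires matching the hypergeometric series term-by-term (or invoking a suitable uniqueness-of-hypergeometric-representation argument) and careful tracking of the Gamma-function prefactors, where the reflection formula $\Gamma(z)\Gamma(1-z) = \pi/\sin(\pi z)$ converts the ratio of prefactors into the sine ratio. A cleaner alternative, which I would pursue if the direct series manipulation gets unwieldy, is to bypass the explicit constant-checking entirely: use Property~\ref{Existence-Abel} to assert a unique $u\in H^*(\Omega)$ solving the equation, note that $\gamma_1{_aD_x^{-t}} + \gamma_2{_xD_b^{-t}}$ commutes with the reflection structure in a way forcing $u$ to have the power-law form $(x-a)^p(b-x)^q$ (this is where the coupled-equation theory of \cite{MR1347689} Ch.~5/30 does the work), and only then compute $c$ by the limit argument above; in that route the parameter conditions \eqref{conditionpq} emerge as the solvability/consistency conditions rather than being verified a posteriori.
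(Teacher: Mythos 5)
Your skeleton (power-law ansatz, hypergeometric evaluation via Property~\ref{coupled-inte}, uniqueness via Property~\ref{Existence-Abel}, and the $x\to b$ limit to pin down $c$) matches the paper's in outline, and the uniqueness argument and the value of $c$ you would extract from the limit are both correct. But the central step --- showing that $\gamma_1{_aD_x^{-t}}u+\gamma_2{_xD_b^{-t}}u$ is constant --- rests on a computational slip and a mechanism that does not work as described, and this is precisely the step you flag as the ``main obstacle,'' so the gap is real.

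First, the slip: Property~\ref{coupled-inte} with $\sigma_1=t$, $\sigma_2=p+1$, $\sigma_3=q+1$ gives the prefactor $(x-a)^{\sigma_1+\sigma_2-1}=(x-a)^{t+p}=(x-a)^{-q}$, not $(x-a)^{t+p+q}$. Since $-1<q<0$, this is \emph{not} constant, so the two summands are not ``constant times ${_2F_1}$''; each is genuinely non-constant (each is an incomplete Beta-type function of $x$) and only their sum is constant. Second, the proposed mechanism fails: applying Euler's transformation (Property~\ref{pro:Euler}) to ${_2F_1}(-q,p+1,t+p+1;z)$ produces $(1-z)^{-p}\,{_2F_1}(1,t,1-q;z)$, so after the transformation the two terms share the elementary factor $(x-a)^{-q}(b-x)^{-p}$ but carry \emph{different} hypergeometric functions, ${_2F_1}(1,t,1-q;z)$ and ${_2F_1}(1,t,1-p;1-z)$; they are not proportional, and proving that their weighted sum equals $\mathrm{const}\cdot z^{q}(1-z)^{p}$ is a nontrivial connection identity you have not supplied. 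Your fallback --- that Property~\ref{Existence-Abel} ``forces'' the solution to be a power law --- is also unsubstantiated: that property gives existence and uniqueness in $H^*(\Omega)$ but says nothing about the form of the solution. The paper sidesteps all of this with a cleaner device: set $\tilde u(x)=c_1(x-a)^{-q-1}(b-x)^{-p-1}$ normalized so that $\int_a^b\tilde u=1$, and show, using only Property~\ref{coupled-inte}, the symmetry ${_2F_1}(\sigma_1,\sigma_2,\sigma_3;z)={_2F_1}(\sigma_2,\sigma_1,\sigma_3;z)$, and the reflection formula $\Gamma(z)\Gamma(1-z)=\pi/\sin(z\pi)$ (which is where $\gamma_1\sin(q\pi)=\gamma_2\sin(p\pi)$ enters), that $\gamma_1{_aD_x^{-t}}u={_aD_x^{-1}}\tilde u$ and $\gamma_2{_xD_b^{-t}}u={_xD_b^{-1}}\tilde u$ exactly --- the choice of $c$ is what makes the first identity exact. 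The sum is then $\int_a^x\tilde u+\int_x^b\tilde u=1$ with no connection formula needed. If you want to salvage your route, you would have to prove the incomplete-Beta connection identity directly; otherwise the $\tilde u$ identification is the missing idea.
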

\begin{proof}
1. We first  verify that $u(x)=c\cdot(x-a)^p(b-x)^q$ is a solution and will prove the uniqueness in the last step. Before proceeding, we simply see that our assumptions are valid in the lemma, namely,  \eqref{conditionpq} uniquely determines $p$ and $q$ and $c$ is well-defined. The first is straightforward to be checked  and the later is confirmed by observing that 
\[
0<\int_a^b (x-a)^{-q-1}(b-x)^{-p-1}\, dx<\infty
\]
since $(x-a)^{-q-1}(b-x)^{-p-1}$ is strictly positive for $x\in \Omega$ and is integrable over $\Omega$.

2.
Let $t_1=1+t+p+q$ and $\tilde{u}(x)= c_1(x-a)^{-q-1}(b-x)^{-p-1}$, where 
\[
c_1= (\int_a^b (x-a)^{-q-1}(b-x)^{-p-1}\, dx )^{-1}.
\]
 Note from conditions~\eqref{conditionpq} that $-1<p,q<0$, $-1<-q-1$. This allows us to apply Property~\ref{coupled-inte} to $\gamma_1{_aD^{-t}_x}u$ and ${_aD^{-t_1}_x}{\tilde{u}}$ separately to obtain
\begin{equation}\label{l1}
\gamma_1{_aD^{-t}_x}u=\gamma_1 c\dfrac{(b-a)^q\Gamma(p+1)}{\Gamma(t+p+1)}(x-a)^{t+p} {_2F_1}(-q,p+1,t+p+1;\frac{x-a}{b-a}),
\end{equation}
and 
\begin{equation}\label{l2}
{_aD^{-t_1}_x}{\tilde{u}}= c_1\dfrac{(b-a)^{-p-1}\Gamma(-q)}{\Gamma(t+p+1)}(x-a)^{t+p} {_2F_1}(p+1,-q,t+p+1;\frac{x-a}{b-a}).
\end{equation}
Recall now Property~\ref{interchange}, we know 
\[
{_2F_1}(-q,p+1,t+p+1;\frac{x-a}{b-a})={_2F_1}(p+1,-q,t+p+1;\frac{x-a}{b-a}).
\]
Comparing \eqref{l1} and \eqref{l2}, we see that $\gamma_1 {_aD^{-t}_x}u={_aD^{-t_1}_x}{\tilde{u}}$.

3. Similarly, we  show $\gamma_2\,{_xD^{-t}_b}u={_xD^{-t_1}_b}{\tilde{u}}$. 

According to Property~\ref{p-reflection} we know ${_xD^{-t}_b}u=Q{_aD_x^{-t}}Qu$ and ${_xD^{-t_1}_b}{\tilde{u}}=Q{_aD_x^{-t_1}Q\tilde{u}}$, where the operator $(Qf)(x):=f(a+b-x)$.
By direct calculation with Property~\ref{coupled-inte}, we obtain
\begin{equation}\label{l3}
\gamma_2 Q{_aD^{-t}_x}Qu= c \,\gamma_2 \dfrac{(b-a)^p\Gamma(q+1)}{\Gamma(t+q+1)}(b-x)^{t+q} {_2F_1}(-p,q+1,t+q+1;\frac{b-x}{b-a}),
\end{equation}
and
\begin{equation}\label{l4}
Q{_aD^{-t_1}_x}Q{\tilde{u}}= c_1\dfrac{(b-a)^{-q-1}\Gamma(-p)}{\Gamma(t+q+1)}(b-x)^{t+q} {_2F_1}(q+1,-p,t+q+1;\frac{b-x}{b-a}).
\end{equation}
Notice again by Property~\ref{interchange} that
\[
{_2F_1}(-p,q+1,t+q+1;\frac{b-x}{b-a})={_2F_1}(q+1,-p,t+q+1;\frac{b-x}{b-a}).
\]
Now compare \eqref{l3} and \eqref{l4}, we readily check that $\gamma_2\, {_xD^{-t}_b}u={_xD^{-t_1}_b}{\tilde{u}}$ in view of the second piece of conditions~\eqref{conditionpq} and the fact that $\Gamma(z)\Gamma(1-z)=\pi/\sin(z\pi)$, ($z$ is not an integer).

4. Consequently, 
\[
\gamma_1{_aD^{-t}_x}u+\gamma_2\,{_xD^{-t}_b}u= {_aD^{-t_1}_x}{\tilde{u}}+{_xD^{-t_1}_b}{\tilde{u}}.
\]
 Since actually $t_1=1$  by utilizing the first piece of conditions~\eqref{conditionpq}, 
\[
{_aD^{-t_1}_x}{\tilde{u}}+{_xD^{-t_1}_b}{\tilde{u}}=\int_a^x\tilde{u}+\int_x^b\tilde{u}=\int_a^b\tilde{u}=1.
\] 
 Therefore,  $\gamma_1{_aD^{-t}_x}u+\gamma_2\,{_xD^{-t}_b}u=1$, which confirms that $u(x)$ is a solution.

 5. The uniqueness of the solution to $\gamma_1{_aD^{-t}_x}u+\gamma_2\,{_xD^{-t}_b}u=1$ in the space $H^*(\Omega)$ is a direct consequence of Property~\ref{Existence-Abel}, provided that $1\in H^*_{t}(\Omega)$. To see this, and in order to conveniently check the definition of $ H^*_{t}(\Omega)$ (see Section \ref{Notations}),  we simply rewrite $1$  in  the form of
 \begin{equation}\label{equ:ExpressionFor1}
  1=\frac{(x-a)^{t+\epsilon}(b-x)^{t+\epsilon}}{(x-a)^{1-(1-t-\epsilon)}(b-x)^{1-(1-t-\epsilon)}},
 \end{equation}

 where $\epsilon$ is chosen such that 
 \[
 0<\epsilon<1-t.
 \]
 In the numerator of right-hand side of \eqref{equ:ExpressionFor1},  it can be verified that
 \begin{equation}
(x-a)^{t+\epsilon}, (b-x)^{t+\epsilon}\in H^{t+\epsilon}(\overline{\Omega})
 \end{equation}  
 with the aid of  the well-known auxiliary inequality
 \begin{equation}
 \frac{|y_1^y-y_2^y|}{|y_1-y_2|^y}\leq 1,\, (0\leq y\leq 1, 0<y_1, 0<y_2, y_1\neq y_2).
 \end{equation}
 
 Therefore, their product 
\[
 (x-a)^{t+\epsilon}(b-x)^{t+\epsilon} \in H_0^{t+\epsilon}(\overline{\Omega}) \quad \text{since it vanishes at }\quad x=a,b.
\]
 Hence, by the definition of $ H^*_{t}(\Omega)$, we see  $1\in H^*_{t}(\Omega)$, which completes the whole proof.
\end{proof}
Analogously, we have the following.
\begin{lemma}\label{lem-solution-2}
Let $0<t<1$ and $0<\gamma_1,\gamma_2$. Then one of the solutions to the equation 
\[
D(\gamma_1{_aD^{-t}_x}u+\gamma_2\,{_xD^{-t}_b})u=1, x\in \Omega
\]
is $u(x)=c\cdot D(x-a)^{p+1}(b-x)^{q+1}$, where 
\begin{equation}
c=\frac{(-p-t)(-p-t+1)\Gamma(t)}{(1-t)(2-t)\gamma_2 \, \Gamma(t+p+1)\Gamma(q+2)},
\end{equation}
 and  $p,q$ are uniquely determined by
\begin{equation}\label{lem:condition}
 p+q=-t \quad \text{and}\quad
 \gamma_1 \sin(q\pi)=\gamma_2\sin(p\pi).
\end{equation}
\end{lemma}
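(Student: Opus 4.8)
The plan is to verify directly that $u(x)=c\,D\big[(x-a)^{p+1}(b-x)^{q+1}\big]$ is a solution, in the same spirit as the proof of Lemma~\ref{lem-solution-1}. Since conditions~\eqref{lem:condition} coincide with~\eqref{conditionpq}, the pair $(p,q)$ is uniquely determined and satisfies $-1<p,q<0$, so $p+1,q+1\in(0,1)$ and $v(x):=(x-a)^{p+1}(b-x)^{q+1}$ is absolutely continuous on $\overline\Omega$ with $v(a)=v(b)=0$. Because the endpoint values vanish, differentiation commutes with the Riemann--Liouville integrals: ${_aD_x^{-t}}(Dv)=D({_aD_x^{-t}}v)$ and ${_xD_b^{-t}}(Dv)=D({_xD_b^{-t}}v)$. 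Hence, writing $u=cDv$ and $G:=\gamma_1{_aD_x^{-t}}v+\gamma_2{_xD_b^{-t}}v$, one gets $D(\gamma_1{_aD_x^{-t}}u+\gamma_2{_xD_b^{-t}}u)=c\,D^2G$, and the claim reduces to showing that $G$ is a polynomial in $x$ of degree at most $2$ and then choosing $c=1/D^2G$.

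To evaluate $G$, I would first compute $\gamma_1{_aD_x^{-t}}v$ by Property~\ref{coupled-inte} (with $\sigma_1=t,\ \sigma_2=p+2,\ \sigma_3=q+2$), then apply Euler's transformation (Property~\ref{pro:Euler}); using $p+q=-t$ repeatedly (so $t+p+1=1-q$, $t+p+2=2-q$, $t+p+q=0$) this becomes $\gamma_1{_aD_x^{-t}}v=\gamma_1\frac{(b-a)^2\Gamma(p+2)}{\Gamma(2-q)}\,z^{1-q}(1-z)^{1-p}\,{_2F_1}(3,t,2-q;z)$ with $z=\frac{x-a}{b-a}$. For $\gamma_2{_xD_b^{-t}}v$, Property~\ref{p-reflection} gives the mirror expression (with $a\leftrightarrow b$, $p\leftrightarrow q$), whose hypergeometric argument is $1-z$; applying the connection formula (Property~\ref{pro:decompostion-F}) rewrites ${_2F_1}(\cdot;1-z)$ as $A\,{_2F_1}(-p-1,q+2,q;z)+B\,z^{1-q}{_2F_1}(t,3,2-q;z)$, and a further Euler transformation turns ${_2F_1}(-p-1,q+2,q;z)$ into $(1-z)^{p-1}{_2F_1}(1-t,-2,q;z)$; since $-2$ is a negative integer this last series terminates and is a genuine degree-$2$ polynomial in $z$.

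Collecting everything, $G$ is the sum of that degree-$2$ polynomial, $\frac{\gamma_2(b-a)^2\Gamma(q+2)\Gamma(1-q)}{2\Gamma(t)}\big(1-\frac{2(1-t)}{q}z+\frac{(1-t)(2-t)}{q(q+1)}z^2\big)$, plus a single ``transcendental'' term $\big(\gamma_1\frac{\Gamma(p+2)}{\Gamma(2-q)}+\gamma_2\frac{\Gamma(q-1)}{\Gamma(-p-1)}\big)(b-a)^2\,z^{1-q}(1-z)^{1-p}{_2F_1}(3,t,2-q;z)$ — the two copies of the ${_2F_1}$ here carry the same parameters precisely because $t+p+q=0$. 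Using the reflection identity $\Gamma(s)\Gamma(1-s)=\pi/\sin(\pi s)$ on $\Gamma(q-1)\Gamma(2-q)$ and on $\Gamma(-p-1)\Gamma(p+2)$, the coefficient of the transcendental term collapses to $\frac{\Gamma(p+2)}{\Gamma(2-q)\sin(\pi q)}\big(\gamma_1\sin(q\pi)-\gamma_2\sin(p\pi)\big)$, which is $0$ by the second condition in~\eqref{lem:condition}. Therefore $G$ equals the degree-$2$ polynomial above, so $D^2G=\frac{\gamma_2\Gamma(q+2)\Gamma(1-q)(1-t)(2-t)}{\Gamma(t)\,q(q+1)}$ is a nonzero constant; taking $c=1/D^2G$ and rewriting $q=-p-t$, $q+1=-p-t+1$, $\Gamma(1-q)=\Gamma(t+p+1)$ gives exactly the stated value of $c$. (Only existence of this one solution is asserted, so no uniqueness argument is needed.)

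The main obstacle is the hypergeometric bookkeeping in the second and third paragraphs: one must notice that $t+p+q=0$ forces the two non-polynomial contributions to ${_2F_1}$ to have identical parameters (so they can genuinely cancel rather than merely simplify), and then recognize that after the $\Gamma$-reflection identity the cancellation coefficient is exactly $\gamma_1\sin(q\pi)-\gamma_2\sin(p\pi)$; once these two observations are made, the remaining steps are routine.
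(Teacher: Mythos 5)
Your proof is correct and follows essentially the same route as the paper's: Property~\ref{coupled-inte} together with the connection formula (Property~\ref{pro:decompostion-F}) and Euler's transformation (Property~\ref{pro:Euler}) reduce $\gamma_1{_aD_x^{-t}}v+\gamma_2{_xD_b^{-t}}v$ to a terminating degree-$2$ hypergeometric polynomial plus a non-polynomial term whose coefficient vanishes by the reflection formula and the condition $\gamma_1\sin(q\pi)=\gamma_2\sin(p\pi)$, after which two differentiations yield the constant and hence $c$. The only cosmetic differences are that you commute $D$ past the fractional integrals at the outset (justified by $v(a)=v(b)=0$) rather than at the end, and that you apply one extra Euler transformation so the two non-polynomial contributions match literally instead of identifying them through the parameter-interchange symmetry as the paper does; both variants are harmless.
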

\begin{proof}
1. We begin with calculating
 \[
\gamma_1{_aD^{-t}_x}((x-a)^{p+1}(b-x)^{q+1}) \quad \text{and}\quad  \gamma_2\,{_xD^{-t}_b}((x-a)^{p+1}(b-x)^{q+1}).
\]

2. In view of Property~\ref{coupled-inte} and the condition $p+q=-t$,
\begin{equation}\label{equ:left-side}
\begin{aligned}
&\gamma_1{_aD^{-t}_x}((x-a)^{p+1}(b-x)^{q+1})\\
&=A\cdot \left(\frac{x-a}{b-a}\right)^{t+p+1} {_2F_1}(-q-1,p+2,t+p+2;\frac{x-a}{b-a}),
\end{aligned}
\end{equation}
where
\begin{equation}
A=\dfrac{\gamma_1(b-a)^{2}\Gamma(p+2)}{\Gamma(t+p+2)}.
\end{equation}

3. We continue to compute with the condition $p+q=-t$ 
\begin{equation}\label{equ:second-part}
\begin{aligned}
&\gamma_2\,{_xD^{-t}_b}((x-a)^{p+1}(b-x)^{q+1})\\
&=\gamma_2 Q{_aD^{-t}_x}Q((x-a)^{p+1}(b-x)^{q+1})\,(\text{recall}\,(Qf)(x):=f(a+b-x))\\
&=B\cdot(\frac{b-x}{b-a})^{t+q+1} {_2F_1}(-p-1,q+2,t+q+2;\frac{b-x}{b-a}),
\end{aligned}
\end{equation}
where
\[
B=\dfrac{\gamma_2(b-a)^{2}\Gamma(q+2)}{\Gamma(t+q+2)}.
\]

4. Now we take a closer look at the term ${_2F_1}(-p-1,q+2,t+q+2;\frac{b-x}{b-a})$. Applying Property~\ref{pro:decompostion-F} and the condition $p+q=-t$   yields
\begin{equation}\label{equ:2F1CloserLookAt}
\begin{aligned}
&{_2F_1}(-p-1,q+2,t+q+2;\frac{b-x}{b-a})\\
&=C\cdot {_2F_1}(-p-1,q+2,-p-t;\frac{x-a}{b-a})\\
&+ D \cdot (\frac{x-a}{b-a})^{p+t+1}{_2F_1}(t,3,t+p+2;\frac{x-a}{b-a}),
\end{aligned}
\end{equation}
where 
\[
C=\frac{\Gamma(t+q+2)\Gamma(t+p+1)}{\Gamma(t)\Gamma(3)},\quad D=\frac{\Gamma(-t-p-1)\Gamma(t+q+2)}{\Gamma(-p-1)\Gamma(q+2)}.
\]

5. Inserting \eqref{equ:2F1CloserLookAt} into ~\eqref{equ:second-part} and taking $p+q=-t$ into account, we thus arrive at
\begin{equation}\label{equ:C-D}
\begin{aligned}
&\gamma_2\,{_xD^{-t}_b}((x-a)^{p+1}(b-x)^{q+1})\\
&=E\cdot (\frac{b-x}{b-a})^{t+q+1}{_2F_1}(-p-1,q+2,-p-t;\frac{x-a}{b-a})\\
&+F\cdot (\frac{x-a}{b-a})^{p+t+1}(\frac{b-x}{b-a})^{t+q+1} {_2F_1}(t,3,t+p+2;\frac{x-a}{b-a}),
\end{aligned}
\end{equation}
where 
\[
E=\frac{\gamma_2 (b-a)^{2}\Gamma(t+p+1)\Gamma(q+2)}{\Gamma(t)\Gamma(3)},\,
F=\frac{\gamma_2(b-a)^{2}\Gamma(-t-p-1)}{\Gamma(-p-1)}.
\]

Notice $t+q+1=1-p$ and  we apply Property~\ref{pro:Euler} to \eqref{equ:C-D} to  obtain
\begin{equation}\label{equ:further}
\begin{aligned}
&\gamma_2\,{_xD^{-t}_b}((x-a)^{p+1}(b-x)^{q+1})\\
&=E\cdot{_2F_1}(1-t,-2,-p-t;\frac{x-a}{b-a})\\
&+ F \cdot (\frac{x-a}{b-a})^{p+t+1} {_2F_1}(p+2,t+p-1,t+p+2;\frac{x-a}{b-a}).
\end{aligned}
\end{equation}

Since
 \[
{_2F_1}(p+2,t+p-1,t+p+2;\frac{x-a}{b-a})={_2F_1}(-q-1,p+2,t+p+2;\frac{x-a}{b-a})
\] 
(by Property~\ref{interchange}), \eqref{equ:further} further becomes
\begin{equation}
\begin{aligned}
&\gamma_2\,{_xD^{-t}_b}((x-a)^{p+1}(b-x)^{q+1})\\
&=E\cdot{_2F_1}(1-t,-2,-p-t;\frac{x-a}{b-a})\\
&+F \cdot (\frac{x-a}{b-a})^{p+t+1}{_2F_1}(-q-1,p+2,t+p+2;\frac{x-a}{b-a}).
\end{aligned}
\end{equation}
Compare the last term with~\eqref{equ:left-side}, we therefore see that
\begin{equation}\label{equ:right-side}
\begin{aligned}
&\gamma_2\, {_xD^{-t}_b}((x-a)^{p+1}(b-x)^{q+1})\\
&=E\cdot{_2F_1}(1-t,-2,-p-t;\frac{x-a}{b-a})\\
&+F \cdot A^{-1}\cdot\gamma_1{_aD^{-t}_x}((x-a)^{p+1}(b-x)^{q+1}).
\end{aligned}
\end{equation}

6. Consequently, summing up \eqref{equ:left-side} and \eqref{equ:right-side} and simplifying by using the definition of Hypergeometric function  (Definition \ref{def:Hygeometric}) give
\begin{equation}
\begin{aligned}
&\gamma_1{_aD^{-t}_x}((x-a)^{p+1}(b-x)^{q+1})+\gamma_2\,{_xD^{-t}_b}((x-a)^{p+1}(b-x)^{q+1})\\
&=E\cdot{_2F_1}(1-t,-2,-p-t;\frac{x-a}{b-a})\\
&+(1+F\cdot A^{-1})\gamma_1{_aD^{-t}_x}((x-a)^{p+1}(b-x)^{q+1})\\
&=E\cdot \sum_{n=0}^2\dfrac{(1-t)_n(-2)_n}{(-p-t)_n\,n!}(\frac{x-a}{b-a})^n\quad (\text{every term is zero after $n=2$})\\
&+(1+F\cdot A^{-1})\gamma_1{_aD^{-t}_x}((x-a)^{p+1}(b-x)^{q+1}).
\end{aligned}
\end{equation}

7. It suffices to show that $1+ F \cdot  A^{-1}=0$  by taking advantage of the formula $\Gamma(z)\Gamma(1-z)=\pi/\sin(z\pi)$, ($z$ is not an integer). Namely, we check
\begin{equation}
\begin{aligned}
F\cdot A^{-1}&=\frac{\gamma_2 \Gamma(-1-t-p)\Gamma(t+p+2)}{\gamma_1\Gamma(-p-1)\Gamma(p+2)}\\
&=\frac{\gamma_2}{\gamma_1}\cdot\frac{\sin(p\pi)}{\sin((t+p)\pi)}\\
&=-\frac{\gamma_2}{\gamma_1}\cdot\frac{\sin(p\pi)}{\sin(q\pi)},
\end{aligned}
\end{equation}
thus  $1+ F \cdot  A^{-1}=0$ in view of the second piece of conditions~\eqref{lem:condition}. It follows that
\begin{equation}\label{lem:quadratic}
\begin{aligned}
&\gamma_1{_aD^{-t}_x}((x-a)^{p+1}(b-x)^{q+1})+\gamma_2\,{_xD^{-t}_b}((x-a)^{p+1}(b-x)^{q+1})\\
&=E\cdot \sum_{n=0}^2\dfrac{(1-t)_n(-2)_n}{(-p-t)_n\, n!}(\frac{x-a}{b-a})^n.
\end{aligned}
\end{equation}

8. Last, on one hand, differentiating \eqref{lem:quadratic} twice at both sides gives
\[
\begin{aligned}
&DD(\gamma_1{_aD^{-t}_x}+\gamma_2\,{_xD^{-t}_b})((x-a)^{p+1}(b-x)^{q+1})\\
&=\frac{(1-t)(2-t)\gamma_2 \Gamma(t+p+1)\Gamma(q+2)}{(-p-t)(-p-t+1)\Gamma(t)}.
\end{aligned}
\]

On the other hand, interchanging the order of differentiation and fractional integrations is permitted (\cite{MR1347689}, Theorem 2.2, p. 39) and results in
\[
\begin{aligned}
&DD(\gamma_1{_aD^{-t}_x}+\gamma_2\,{_xD^{-t}_b})((x-a)^{p+1}(b-x)^{q+1})\\
&=D(\gamma_1{_aD^{-t}_x}+\gamma_2\,{_xD^{-t}_b})D((x-a)^{p+1}(b-x)^{q+1}).
\end{aligned}
\]

Thereby,
\begin{equation}\label{equ:Lastone}
\begin{aligned}
&D(\gamma_1{_aD^{-t}_x}+\gamma_2\,{_xD^{-t}_b})D((x-a)^{p+1}(b-x)^{q+1})\\
&=\frac{(1-t)(2-t)\gamma_2 \Gamma(t+p+1)\Gamma(q+2)}{(-p-t)(-p-t+1)\Gamma(t)}.
\end{aligned}
\end{equation}
Dividing  both sides of  \eqref{equ:Lastone} by the right-hand side concludes
\begin{equation}
D(\gamma_1{_aD^{-t}_x}u+\gamma_2\,{_xD^{-t}_b})u=1,
\end{equation}
as desired, which completes the whole proof.
\end{proof}

Based on Lemma \ref{lem-solution-1} and \ref{lem-solution-2}, we arrive at the following:

\begin{lemma}\label{lem:solution-derivative}
Let   $0<\sigma<1$ and $0<\gamma_1, \gamma_2$. Assume that $f\in H^*_\sigma(\Omega)$, $Df$ exists in $\Omega$ and $Df\in H^*_\sigma(\Omega)$. If $u(x),v(x)\in H^*(\Omega)$ and satisfy 
\begin{equation}
(\gamma_1{_aD_x^{-\sigma}}+\gamma_2\,{_xD_b^{-\sigma}})u=f\,\text{and} \, (\gamma_1{_aD_x^{-\sigma}}+\gamma_2\,{_xD_b^{-\sigma}})v=Df, x\in \Omega,
\end{equation}
then
\begin{equation}
D(\gamma_1{_aD_x^{-\sigma}}+\gamma_2\,{_xD_b^{-\sigma}})(u-Y)=0, x\in \Omega,
\end{equation}
where 
\begin{equation}
\begin{aligned}
Y&=\int_a^x v(t)\, dt-\frac{cS}{S_1}\int_a^x (t-a)^p(b-t)^q\, dt+\frac{c_1S}{S_1} D((x-a)^{p+1}(b-x)^{q+1}),\\
S&= \int_a^bv(t) \, dt,\\
c&=  \Gamma(-q)\left(\gamma_1 (b-a)^{1+p+q}\Gamma(p+1)\int_a^b (t-a)^{-q-1}(b-t)^{-p-1}\, dt \right)^{-1},\\
S_1&=  \frac{\Gamma(-q)\int_a^b (t-a)^p(b-t)^q \, dt}{ \gamma_1 (b-a)^{1+p+q}\Gamma(p+1)\int_a^b (t-a)^{-q-1}(b-t)^{-p-1}\, dt },\\
c_1&= \frac{(-p-\sigma)(-p-\sigma+1)\Gamma(\sigma)}{(1-\sigma)(2-\sigma)\gamma_2 \,\Gamma(\sigma+p+1)\Gamma(q+2)},
\end{aligned}
\end{equation}
and $p$, $q$ are uniquely determined by
\begin{equation}
p+q=-\sigma \quad \text{and}\quad
\gamma_1 \sin(q\pi)=\gamma_2\sin(p\pi).
\end{equation}
\end{lemma}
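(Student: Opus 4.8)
The plan is to work with the operator $DT$ directly, where $T:=\gamma_1\,{_aD_x^{-\sigma}}+\gamma_2\,{_xD_b^{-\sigma}}$, and to reduce the claim to a single commutation relation between ${_xD_b^{-\sigma}}$ and the first-order antiderivative ${_aD_x^{-1}}$. Set $w_2(x):=c\,(x-a)^p(b-x)^q$ and $w_3(x):=c_1\,D\big((x-a)^{p+1}(b-x)^{q+1}\big)$, with $c,c_1,p,q$ exactly as in the statement. Then Lemma~\ref{lem-solution-1} gives $Tw_2=1$ and Lemma~\ref{lem-solution-2} gives $D(Tw_3)=1$ on $\Omega$. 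Comparing the constant $c$ with the quantity $S_1$ of the statement, one reads off $\int_a^x(t-a)^p(b-t)^q\,dt=c^{-1}\,{_aD_x^{-1}}w_2$ and $\int_a^b w_2=S_1$; hence, writing $S=\int_a^b v$,
\[
Y={_aD_x^{-1}}v-\frac{S}{S_1}\,{_aD_x^{-1}}w_2+\frac{S}{S_1}\,w_3 .
\]
Since $Tu=f$ forces $DTu=Df$, it suffices to prove $DTY=Df$; then $DT(u-Y)=DTu-DTY=0$ (indeed $T(u-Y)=f-TY$ will turn out to be constant on $\Omega$).

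The one nonroutine ingredient is the identity: for $\phi\in L^1(\Omega)$,
\[
DT\big({_aD_x^{-1}}\phi\big)=T\phi-\frac{\gamma_2\,(b-x)^{\sigma-1}}{\Gamma(\sigma)}\int_a^b\phi(t)\,dt .
\]
To obtain it, first $D\,{_aD_x^{-\sigma}}{_aD_x^{-1}}\phi=D\,{_aD_x^{-(\sigma+1)}}\phi={_aD_x^{-\sigma}}\phi$ by the semigroup property and the fundamental theorem of calculus; next, integrating ${_xD_b^{-\sigma}}({_aD_x^{-1}}\phi)(x)=\tfrac1{\Gamma(\sigma)}\int_x^b(s-x)^{\sigma-1}\big(\int_a^s\phi\big)\,ds$ by parts (the boundary contribution at $s=x$ vanishes because $(s-x)^{\sigma}\to0$ while $\int_a^s\phi$ stays bounded) gives
\[
{_xD_b^{-\sigma}}\big({_aD_x^{-1}}\phi\big)(x)=\frac{(b-x)^{\sigma}}{\Gamma(\sigma+1)}\int_a^b\phi(t)\,dt-{_xD_b^{-(\sigma+1)}}\phi(x),
\]
and differentiating this in $x$, using $D\,{_xD_b^{-(\sigma+1)}}\phi=-{_xD_b^{-\sigma}}\phi$, produces the stated formula.

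Applying the formula with $\phi=v$ and with $\phi=w_2$, and inserting $Tv=Df$, $Tw_2=1$, $D(Tw_3)=1$, $\int_a^b v=S$ and $\int_a^b w_2=S_1$, we obtain
\[
DTY=\Big(Df-\frac{\gamma_2(b-x)^{\sigma-1}}{\Gamma(\sigma)}S\Big)-\frac{S}{S_1}\Big(1-\frac{\gamma_2(b-x)^{\sigma-1}}{\Gamma(\sigma)}S_1\Big)+\frac{S}{S_1}.
\]
The two $\gamma_2(b-x)^{\sigma-1}/\Gamma(\sigma)$-terms cancel (this is exactly why the weight $S/S_1$ in the definition of $Y$ is chosen so as to pair $\int_a^b v=S$ against $\int_a^b w_2=S_1$), and $-S/S_1$ cancels $+S/S_1$; hence $DTY=Df$, which is what was needed.

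The heart of the matter — more a question of care than of difficulty — is the commutation identity: one must check that $\int_a^b(t-a)^{\sigma}|v|$, $\int_a^b(t-a)^{\sigma}|w_2|$ and the analogous integrals are finite for functions in $H^*(\Omega)$ (they are, such functions being integrable on $\Omega$), that the boundary term in the integration by parts genuinely vanishes in spite of the integrable endpoint singularities of $v$, $w_2$ and $w_3$, and that $D\,{_aD_x^{-(\sigma+1)}}={_aD_x^{-\sigma}}$ and $D\,{_xD_b^{-(\sigma+1)}}=-{_xD_b^{-\sigma}}$ hold in the required generality. Granting these, the proof reduces to the displayed cancellation, whose only delicate point is keeping the total masses $S$ and $S_1$ correctly matched.
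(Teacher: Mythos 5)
Your argument is correct and follows essentially the same route as the paper's: both reduce the claim to the commutation identity $DT({_aD_x^{-1}}\phi)=T\phi-\frac{\gamma_2(b-x)^{\sigma-1}}{\Gamma(\sigma)}\int_a^b\phi$ (which the paper derives inline via ${_xD_b^{-1}}\phi=S-{_aD_x^{-1}}\phi$ and interchanging $D$ with the fractional integrals), and both then invoke Lemma~\ref{lem-solution-1} for $Tw_2=1$ and Lemma~\ref{lem-solution-2} for $D(Tw_3)=1$ so that the $(b-x)^{\sigma-1}$ terms and the constants cancel. The only difference is organizational — you compute $DTY$ in one pass rather than adding the correction terms successively — so nothing further is needed.
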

\begin{proof}
1. According to the assumption in the lemma we first have
\begin{equation}\label{equ:lem-start-1}
D(\gamma_1{_aD_x^{-\sigma}}+\gamma_2\,{_xD_b^{-\sigma}})u-
(\gamma_1{_aD_x^{-\sigma}}+\gamma_2\,{_xD_b^{-\sigma}})v=0, x\in \Omega.
\end{equation}

 We examine the second term
\begin{equation}
\begin{aligned}
&(\gamma_1{_aD_x^{-\sigma}}+\gamma_2\,{_xD_b^{-\sigma}})v\\
&=\gamma_1{_aD_x^{-\sigma}}D{_aD^{-1}_x}v-\gamma_2\,{_xD_b^{-\sigma}}D{_xD^{-1}_b}v\\
&=\gamma_1 D {_aD_x^{-\sigma}}{_aD^{-1}_x}v-\gamma_2 D{_xD_b^{-\sigma}}{_xD^{-1}_b}v \\
&(\text{interchanging the order of operators by Theorem 2.2, p. 39, \cite{MR1347689}})\\
&=\gamma_1 D {_aD_x^{-\sigma}}{_aD^{-1}_x}v-\gamma_2 D{_xD_b^{-\sigma}}(S-{_aD^{-1}_x}v)\quad (\text{let}\, S=\int_a^bv(t)\, dt)\\
&=D(\gamma_1{_aD_x^{-\sigma}}+\gamma_2\,{_xD_b^{-\sigma}}){_aD^{-1}_x}v+\frac{\gamma_2 S}{\Gamma(\sigma)}(b-x)^{\sigma-1}.
\end{aligned}
\end{equation}

 Substituting this into \eqref{equ:lem-start-1} gives
\begin{equation}\label{equ:lem-start-2}
D(\gamma_1{_aD_x^{-\sigma}}+\gamma_2\,{_xD_b^{-\sigma}})(u-{_aD^{-1}_x}v)=\frac{\gamma_2 S}{\Gamma(\sigma)}(b-x)^{\sigma-1}.
\end{equation}

2. We claim
\begin{equation}\label{equ:AdditionalConstant}
D(\gamma_1{_aD_x^{-\sigma}}+\gamma_2\,{_xD_b^{-\sigma}})(c \int_a^x (t-a)^p(b-t)^q\, dt)=1-\frac{\gamma_2 S_1}{\Gamma(\sigma)}(b-x)^{\sigma-1},
\end{equation}
where 
\[
\begin{aligned}
&c=\Gamma(-q)\left(\gamma_1 (b-a)^{1+p+q}\Gamma(p+1)\int_a^b (t-a)^{-q-1}(b-t)^{-p-1}\, dt \right)^{-1},\\
& S_1=c\int_a^b (t-a)^p(b-t)^q \, dt,
\end{aligned}
\]
and $p,q$ satisfy
\[
 p+q=-\sigma \quad \text{and}\quad
\gamma_1 \sin(q\pi)=\gamma_2\sin(p\pi).
\]

To see this,
\[
\begin{aligned}
&D(\gamma_1{_aD_x^{-\sigma}}+\gamma_2\,{_xD_b^{-\sigma}})(c \int_a^x (t-a)^p(b-t)^q\, dt)\\
&=\gamma_1 D{_aD_x^{-\sigma}}(c \int_a^x (t-a)^p(b-t)^q\, dt)\\
&+\gamma_2 D{_xD_b^{-\sigma}}(S_1-c\int_x^b (t-a)^p(b-t)^q\, dt)\\
&=\gamma_1 {_aD_x^{-\sigma}}D(c \int_a^x (t-a)^p(b-t)^q\, dt)\\
&+\gamma_2 \,{_xD_b^{-\sigma}}D(-c\int_x^b (t-a)^p(b-t)^q\, dt)+\gamma_2 D{_xD_b^{-\sigma}}S_1\\
&(\text{interchanging the order of operators})\\
&=(\gamma_1{_aD_x^{-\sigma}}+\gamma_2\,{_xD_b^{-\sigma}})(c (x-a)^p(b-x)^q)-\frac{\gamma_2 S_1}{\Gamma(\sigma)}(b-x)^{\sigma-1}\\
&=1-\frac{\gamma_2 S_1}{\Gamma(\sigma)}(b-x)^{\sigma-1}\, (\text{applying  Lemma~\ref{lem-solution-1}}).
\end{aligned}
\]

3. Before going further, let us simply see that $c$ is well-defined, $c\neq 0$ and  $S_1\neq 0$  by observing that
\[
0<\int_a^b (t-a)^{-q-1}(b-t)^{-p-1}\, dt,\, \int_a^b (t-a)^p(b-t)^q \, dt<\infty.
\]

 Adding \eqref{equ:lem-start-2} to \eqref{equ:AdditionalConstant} multiplied by $\frac{S}{S_1}$, we have
\begin{equation}\label{equ:subtract-1}
\begin{aligned}
&D(\gamma_1{_aD_x^{-\sigma}}+\gamma_2\,{_xD_b^{-\sigma}})(u-{_aD^{-1}_x}v+\frac{cS}{S_1}\int_a^x (t-a)^p(b-t)^q\, dt)\\
&=\frac{S}{S_1}.
\end{aligned}
\end{equation}

 Invoking Lemma~\ref{lem-solution-2}, we know
\begin{equation}\label{equ:subtract-2}
D(\gamma_1{_aD_x^{-\sigma}}+\gamma_2\,{_xD_b^{-\sigma}})\left(\frac{-c_1S}{S_1} D(x-a)^{p+1}(b-x)^{q+1}\right)=-\frac{S}{S_1},
\end{equation}
where
\[
c_1=\frac{(-p-\sigma)(-p-\sigma+1)\Gamma(\sigma)}{(1-\sigma)(2-\sigma)\gamma_2 \,\Gamma(\sigma+p+1)\Gamma(q+2)}.
\]

Summing up \eqref{equ:subtract-1} and \eqref{equ:subtract-2} produces
\begin{equation}
D(\gamma_1{_aD_x^{-\sigma}}+\gamma_2\,{_xD_b^{-\sigma}})(u-Y)=0,
\end{equation}
where 
\[
Y=\int_a^x v(t)\, dt-\frac{cS}{S_1}\int_a^x (t-a)^p(b-t)^q\, dt+\frac{c_1S}{S_1} D(x-a)^{p+1}(b-x)^{q+1},
\]
which is the desired result. We will keep the coefficients this way since it is convenient to be used later.
\end{proof}
\section{Raising the regularity}\label{sec:raisingtheregularity}
In this section, we will establish three lemmas, which are the key analysis of the whole work and  crucial steps towards the proof of Theorem \ref{theorem}. Also, as explained in Section \ref{sec:SP}, this three lemmas will help us connect the weak solution from the Sobolev space $\widehat{H}^{(1+\mu)/2}_0(\Omega)$ to $H^*(\Omega)$ and further raise the regularity from the space $H^*(\Omega)$ to better spaces $H^*_\sigma(\Omega)$.
\vspace{0.2cm}

Let $0<\mu, \alpha, \beta <1, \alpha+\beta=1$ throughout this section (Section \ref{sec:raisingtheregularity}).

\begin{lemma}\label{lemma-Pre-RaiseRegularity}
	Let $A=\alpha-\beta\cos(\mu\pi)$, $B=\beta\sin(\mu\pi)$ and $f(x)\in H(\overline{\Omega})$. Denote  $r_b(x)=b-x, x\in \overline{\Omega}$, $F(x)=(b-x)^\mu f(x)$ and 
	$\frac{A-iB}{A+iB}=e^ {i\theta}$ with the value of $\theta$ chosen so that $0\leq \theta<2\pi$. 
	Consider the problem
	\begin{equation}\label{equ-RProblem-1}
	A\psi(x)+ \frac{B}{\pi}\int_a^b\frac{\psi(t)}{t-x} dt =F(x), x\in \Omega.
	\end{equation}
	Then each of the following is valid:
	\begin{enumerate}
		\item \eqref{equ-RProblem-1} is solvable in spaces $X_2=H^*(\Omega)\cap C( (a,b] )$ and $X_3=H^*(\Omega)\cap C([a,b))$ respectively, and its according solution $\psi_i   (i=2,3)$ is unique and is represented as
		\begin{equation}
		\begin{aligned}
		\psi_i(x)&=\frac{AF(x)}{A^2+B^2}-\\
		&\frac{B}{\pi(A^2+B^2)}\int_a^b\left( \frac{x-a}{t-a}\right)^{1-n_a(X_i)-\frac{\theta}{2\pi}}\left(\frac{b-x}{b-t}\right)^{\frac{\theta}{2\pi}-n_b(X_i)} \frac{F(t)}{t-x} dt ,
		\end{aligned}
		\end{equation}
		where
		\[
		\begin{aligned}
	 n_a(X_2)&=1, n_a(X_3)=0;\\ 
    n_b(X_2)&=0, n_b(X_3)=1.
		\end{aligned}
		\]
		\item $\theta$ satisfies 
		\begin{equation}\label{equ:thetacondition}
			\mu<\frac{\theta}{2\pi}<1.
		\end{equation}
		\item   The solution $\psi_i(x)$ in part $(1)$ satisfies
		\begin{equation}
		\frac{\psi_i(x)}{(b-x)^\mu}\in H^*(\Omega), \, ( i=2,3).
		\end{equation}
		\item  The solution $\psi_2(x)$ in part $(1)$ satisfies
		 \begin{equation}
		{_aD_x^{-\mu}} \frac{\psi_2}{r_b^\mu} \in L^{p}(I_b),
		\end{equation}	
		where $p=\frac{1}{1-\frac{\theta}{2\pi}},\,I_b=(\frac{b-a}{2}, b) $.
		
		\item  If the solution $\psi_i(x)$ in part $(1)$ satisfies $\psi_2(x)=\psi_3(x)$, $x\in \Omega$, then $\psi_2$ (or $\psi_3$, which is the same) has four equivalent representations:
		\begin{equation}
		\begin{aligned}
			\psi_2(x)&=\frac{AF(x)}{A^2+B^2}-\frac{B}{\pi(A^2+B^2)}\int_a^b\left( \frac{x-a}{t-a}\right)^{1-\frac{\theta}{2\pi}}\left(\frac{b-x}{b-t}\right)^{\frac{\theta}{2\pi}} \frac{F(t)}{t-x} dt ,\\
			&=\frac{AF(x)}{A^2+B^2}-\frac{B}{\pi(A^2+B^2)}\int_a^b\left( \frac{x-a}{t-a}\right)^{1-\frac{\theta}{2\pi}}\left(\frac{b-x}{b-t}\right)^{\frac{\theta}{2\pi}-1} \frac{F(t)}{t-x} dt ,\\
			&=\frac{AF(x)}{A^2+B^2}-\frac{B}{\pi(A^2+B^2)}\int_a^b\left( \frac{x-a}{t-a}\right)^{-\frac{\theta}{2\pi}}\left(\frac{b-x}{b-t}\right)^{\frac{\theta}{2\pi}} \frac{F(t)}{t-x} dt ,\\
			&=\frac{AF(x)}{A^2+B^2}-\frac{B}{\pi(A^2+B^2)}\int_a^b\left( \frac{x-a}{t-a}\right)^{-\frac{\theta}{2\pi}}\left(\frac{b-x}{b-t}\right)^{\frac{\theta}{2\pi}-1} \frac{F(t)}{t-x} dt .
		\end{aligned}
		\end{equation}
		
	\end{enumerate}
\end{lemma}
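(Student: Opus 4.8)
I would prove the five parts in the order (2), (1), (3), (4), (5), since the pinching $\mu<\theta/(2\pi)<1$ of (2) is exactly what is needed to apply Property~\ref{pro:SolvabilityofDSI} in (1). For (2): write $A+iB=\alpha-\beta\cos(\mu\pi)+i\beta\sin(\mu\pi)=\alpha-\beta e^{-i\mu\pi}=\alpha\cdot 1+\beta\cdot e^{i(1-\mu)\pi}$. Since $0<\alpha,\beta<1$ and $0<\mu<1$, this is a combination of the unit vectors $1$ and $e^{i(1-\mu)\pi}$ with strictly positive coefficients, and those two directions differ by the angle $(1-\mu)\pi<\pi$; hence $A+iB$ lies in the interior of the convex cone they span, i.e. $0<\arg(A+iB)<(1-\mu)\pi$, and in particular $A^2+B^2=|A+iB|^2>0$. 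Since $A,B$ are real, $e^{i\theta}=\overline{A+iB}/(A+iB)=e^{-2i\arg(A+iB)}$, so with $0\le\theta<2\pi$ one gets $\theta=2\pi-2\arg(A+iB)$ and $\theta/(2\pi)=1-\arg(A+iB)/\pi\in(\mu,1)$.

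Part (1) is then a direct invocation of Property~\ref{pro:SolvabilityofDSI} with $c_1=A$, $c_2=B$, and right-hand side $F(x)=(b-x)^\mu f(x)$, which in the notation of that property is $\dfrac{f_*(x)}{(x-a)^{1-\nu_a}(b-x)^{1-\nu_b}}$ with $\nu_a=1$, $\nu_b=1+\mu$, $f_*=f\in H(\overline{\Omega})$. The required inequalities $1-n_a(X_i)-\theta/(2\pi)<\nu_a$ and $\theta/(2\pi)-n_b(X_i)<\nu_b$ hold for $i=2,3$ by (2) (the first because $n_a(X_i)+\theta/(2\pi)>0$, the second because $\theta/(2\pi)-n_b(X_i)\le\theta/(2\pi)<1\le\nu_b$). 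Hence \eqref{equ-RProblem-1} is unconditionally solvable in $X_2$ and in $X_3$ with the free constant $C=0$, giving uniqueness and the displayed formula, which is \eqref{eq-sloution-CSE} specialised to $C=0$, $c_1=A$, $c_2=B$ and $F$ in the role of the right-hand side.

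For (3) I would substitute $F(t)=(b-t)^\mu f(t)$ into the formula of (1) and divide by $(b-x)^\mu$; collecting the powers of $(x-a)$ and $(b-x)$ gives, for $i=2,3$,
\[
\frac{\psi_i(x)}{(b-x)^\mu}=\frac{A}{A^2+B^2}f(x)-\frac{B}{A^2+B^2}(S_{\nu_a',\nu_b'}f)(x),
\]
where $\nu_a'=1-n_a(X_i)-\theta/(2\pi)$, $\nu_b'=\theta/(2\pi)-n_b(X_i)-\mu$, and $S_{\cdot,\cdot}$ is the weighted singular operator of Property~\ref{pro-WeightedMapping}. Exactly one of $\nu_a',\nu_b'$ is positive ($\nu_b'=\theta/(2\pi)-\mu\in(0,1-\mu)$ for $i=2$, $\nu_a'=1-\theta/(2\pi)\in(0,1-\mu)$ for $i=3$ --- here (2) is what guarantees both positivity \emph{and} the upper bound $1-\mu$), and this positive exponent can sit outside the admissible window of Property~\ref{pro-WeightedMapping} when $f$ has small H\"older exponent. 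I would remove the obstruction by peeling it off: for $i=2$,
\[
(S_{-\theta/2\pi,\,\theta/2\pi-\mu}f)(x)=(b-x)^{\theta/2\pi-\mu}\big(S_{-\theta/2\pi,\,0}\big[(b-\cdot)^{\mu-\theta/2\pi}f\big]\big)(x),
\]
and symmetrically peeling off $(x-a)^{1-\theta/2\pi}$ when $i=3$. The new density $(b-t)^{\mu-\theta/2\pi}f(t)$ has only an integrable endpoint singularity (order $\theta/(2\pi)-\mu<1$), hence lies in $H^*_\sigma(\Omega)$ for all small $\sigma>0$; $S_{-\theta/2\pi,0}$ preserves $H^*_\sigma(\Omega)$ for $0<\sigma<1-\theta/(2\pi)$ by Property~\ref{pro-WeightedMapping}; multiplying back by the non-negative power $(b-x)^{\theta/2\pi-\mu}$ keeps us in $H^*(\Omega)$; and $f\in H(\overline{\Omega})\subset H^*(\Omega)$. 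Therefore $\psi_i/(b-x)^\mu\in H^*(\Omega)$ for $i=2,3$.

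Parts (4) and (5) are short. For (4): in the factorisation just used, as $x\to b$ the leading singular part of $S_{-\theta/2\pi,0}\big[(b-\cdot)^{\mu-\theta/2\pi}f\big]$ is a constant multiple of $(b-x)^{\mu-\theta/2\pi}$ --- the standard endpoint behaviour of a Cauchy-type integral with power-singular density, with no logarithm because $\theta/(2\pi)\ne\mu$ --- which is exactly cancelled by the prefactor $(b-x)^{\theta/2\pi-\mu}$; so $\psi_2/(b-x)^\mu$ is bounded near $b$, and since $I_b$ stays away from $a$ (where $\psi_2/(b-x)^\mu$ is at worst integrably singular), $\psi_2/(b-x)^\mu\in L^\infty(I_b)$. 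Then ${_aD_x^{-\mu}}(\psi_2/(b-x)^\mu)$ is bounded on $I_b$ --- split the defining integral into a part near $a$ (a harmless integral of an $L^1$-function) and a part near $x$ (a fractional integral of a bounded function) --- hence it lies in $L^p(I_b)$ for every $p<\infty$, in particular $p=1/(1-\theta/(2\pi))$. For (5): if $\psi_2=\psi_3$ on $\Omega$, this common function is continuous on $(a,b]$ (being $\psi_2$) and on $[a,b)$ (being $\psi_3$), hence on $[a,b]$, so it lies in $X_4$; by the Remark following Property~\ref{pro:SolvabilityofDSI}, a solution in $X_4$ has four equivalent representations, namely \eqref{eq-sloution-CSE} with $C=0$ for the four pairs $(n_a,n_b)\in\{(0,0),(0,1),(1,0),(1,1)\}$, which are precisely the four displayed formulas. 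The step I expect to be the real work is (3): matching the weight exponents produced by the explicit solution to the admissible ranges in Property~\ref{pro-WeightedMapping} via the peeling device, uniformly over the two solution classes --- and this is exactly where the sharp two-sided bound (2) is indispensable.
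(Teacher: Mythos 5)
Your parts (1)--(4) are essentially correct, and in places you take a genuinely different (and cleaner) route than the paper. For part (2), the cone argument $A+iB=\alpha\cdot 1+\beta e^{i(1-\mu)\pi}$ with $0<\arg(A+iB)<(1-\mu)\pi$ replaces the paper's trigonometric computation of $\alpha/\beta=\sin((\theta-2\mu\pi)/2)/\sin(\theta/2)$; both give \eqref{equ:thetacondition}. For part (3), your peeling identity plus Property~\ref{pro-WeightedMapping} is a legitimate alternative to the paper's device of absorbing $\epsilon$-shifted powers into the density and invoking Property~\ref{pro:BInW}; the exponent checks you list are the right ones and they do close. For part (4) you prove the stronger claim that $\psi_2/r_b^\mu$ is bounded near $b$ by appealing to the classical endpoint asymptotics of Cauchy-type integrals with power-singular density; this is standard (Muskhelishvili) but is not among the paper's quoted properties, and you should also note that the remainder after subtracting the leading term $C_0(b-x)^{\mu-\theta/(2\pi)}$ is itself power-singular of some order $\gamma_0<\theta/(2\pi)-\mu$, so the prefactor $(b-x)^{\theta/(2\pi)-\mu}$ still yields a bounded product. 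The paper instead stays inside its own toolkit, using ${_aD_x^{-\mu}}g=\cos(\mu\pi)\,{_xD_b^{-\mu}}g-\sin(\mu\pi)\,{_xD_b^{-\mu}}(r_b^{-\mu}S(r_b^{\mu}g))$ together with the $L^p$ mapping properties to get only the $L^p(I_b)$ conclusion, which is all that is later needed.

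The genuine gap is in part (5). The Remark after Property~\ref{pro:SolvabilityofDSI} only says that a solution lying in $X_4$ also lies in $X_1,X_2,X_3$ and therefore admits a representation in each space; but for $i=1$ the general solution \eqref{eq-sloution-CSE} carries an \emph{arbitrary} constant $C$ multiplying the homogeneous solution $(x-a)^{-\theta/(2\pi)}(b-x)^{\theta/(2\pi)-1}$, and the particular function $\psi_2$ equals that formula for some specific $C$ which the Remark does not determine. The fourth displayed representation is precisely the assertion $C=0$, and it is the only one of the four that requires an argument. The paper supplies it: solving for $C$ gives
\begin{equation*}
C=\Bigl(\psi_2(x)-\tfrac{AF(x)}{A^2+B^2}\Bigr)(x-a)^{\frac{\theta}{2\pi}}(b-x)^{1-\frac{\theta}{2\pi}}+\frac{B}{\pi(A^2+B^2)}\int_a^b\frac{(t-a)^{\theta/(2\pi)}f(t)(b-t)^{1-(\theta/(2\pi)-\mu)}}{t-x}\,dt,
\end{equation*}
and letting $x\to b^-$ both terms vanish (the first because $\psi_2\in C([a,b])$, $F$ is continuous and $1-\theta/(2\pi)>0$; the second because, by Property~\ref{pro:BInW}, the integral is a H\"older function vanishing at $b$). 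Your proof needs this, or an equivalent comparison of blow-up rates at $x=b$ between $\psi_2$ (bounded) and the homogeneous solution (which blows up like $(b-x)^{\theta/(2\pi)-1}$), to legitimately write the last of the four representations.
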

\begin{proof}

1. Proof for part (1). 

Let us see that   part (1) is just a direct consequence of  the first part of  Property~\ref{pro:SolvabilityofDSI}, we only need to justify the applicability of  Property~\ref{pro:SolvabilityofDSI}.

We shall need to check three aspects.

Firstly,  $A^2+B^2\neq 0$ by recalling $0<\mu, \alpha, \beta <1, \alpha+\beta=1$ (see the beginning of  Section \ref{sec:raisingtheregularity}).

Secondly, the function $F(x)$ can be equivalently rewritten as
\begin{equation}
F(x)=(b-x)^\mu f(x)=\frac{(b-x)^{\mu} f(x)}{(x-a)^{1-1}(b-x)^{1-1}}.
\end{equation}

 In the numerator,  since $f\in H(\overline{\Omega})$, there exists a $0<\lambda_0<\mu$ so that
 \begin{equation}\label{equ:lambda}
 f\in H^{\lambda_0}(\overline{\Omega}) ,
 \end{equation}
 and we directly check that 
 \[
 (b-x)^\mu \in H^{\mu}(\overline{\Omega}) 
 \]
 with  the assistance of the well-known inequality
 \begin{equation}\label{equ:wellknowninequality}
 \frac{|y_1^y-y_2^y|}{|y_1-y_2|^y}\leq 1,\, (0\leq y\leq 1, 0<y_1, 0<y_2, y_1\neq y_2).
 \end{equation}
 Therefore, their product $(b-x)^\mu f(x)\in H^{\lambda_0}(\overline{\Omega})$ follows.
 
Lastly,   observe that $B\neq 0$,  which implies that  $\theta\neq 0$. Hence, 
 \[
 1-n_a(X_i)-\frac{\theta}{2\pi}<1, \frac{\theta}{2\pi}-n_b(X_i)<1
 \]
hold for  $i=2,  3$. 

So, all the hypotheses are met for applying  the first part of  Property~\ref{pro:SolvabilityofDSI},   the part $(1)$ of Lemma \ref{lemma-Pre-RaiseRegularity} follows.

\vspace{0.2cm}

2. Now we  prove the part (2).

Since
\begin{equation}\label{equ-AB-Theta}
\frac{A-iB}{A+iB}=e^{i\theta}\, \text{and}\, 0\leq \theta<2\pi,
\end{equation}
it is clear that
\[\frac{\theta}{2\pi}<1.
\]

Substituting for $A, B$ and simplifying \eqref{equ-AB-Theta}, we have
\[
\frac{\alpha-\beta\cos(\mu\pi)-i\beta\sin(\mu\pi)}{\alpha-\beta\cos(\mu\pi)+i\beta\sin(\mu\pi)}=\frac{\alpha-\beta e^{i\mu \pi}}{\alpha-\beta e^{-i\mu \pi}}=\frac{\frac{\alpha}{\beta}-e^{i\mu\pi}}{\frac{\alpha}{\beta}-e^{-i\mu\pi}}=e^{i\theta}.
\]
Solving the last equality  for $\frac{\alpha}{\beta}$ gives
\[
\frac{\alpha}{\beta}=\frac{e^{i(\theta-2\mu\pi)}-1}{e^{i\theta}-1}e^{i\mu\pi}.
\]
Taking the fact $e^{iz}-1=2ie^{iz/2}\sin(z/2), z\in \mathbb{C}$ into account, we arrive at
\[
\frac{\alpha}{\beta}=\frac{\sin((\theta-2\mu\pi)/2)}{\sin(\theta/2)}.
\]
Again, recalling $0<\mu, \alpha, \beta <1, \alpha+\beta=1$   we derive

\[
0<(\theta-2\mu\pi)/2<\pi.
\]
Hence, we see the assertion
\[
\mu<\frac{\theta}{2\pi}<1.
\]

This fact will be  used in the rest of  proof and  in the proof of  subsequent lemmas.

\vspace{0.2cm}

3. To prove the part (3), namely $\frac{\psi_i(x)}{(b-x)^\mu}\in H^*(\Omega) \, ( i=2,3)$, we discuss the two cases  separately in this step and the next step.

For $i=2$, substituting for $\psi_2$ into $\frac{\psi_2(x)}{(b-x)^\mu}$ by using the representation in   part (1) of the lemma  and simplifying, we obtain
\begin{equation}\label{equ-ForPsiTwoExpression}
\begin{aligned}
 	\frac{\psi_2(x)}{(b-x)^\mu}&=\frac{Af(x)}{A^2+B^2}-\frac{B}{\pi(A^2+B^2)}\int_a^b\left( \frac{x-a}{t-a}\right)^{-\frac{\theta}{2\pi}}\left(\frac{b-x}{b-t}\right)^{\frac{\theta}{2\pi}-\mu} \frac{f(t)}{t-x} dt \\
 	&=\frac{A}{A^2+B^2}\Pi_1-\frac{B}{\pi(A^2+B^2)}\Pi_2.
 \end{aligned}
\end{equation}

Let us look at $\Pi_1$ and $\Pi_2$.

$\Pi_1$, namely $f(x)$ (recall $f\in H^{\lambda_0}(\overline{\Omega})$ in \eqref{equ:lambda}), is relatively easily seen to belong to $H^*(\Omega)$ by  manipulating as follows:
\begin{equation}\label{equ:fractionalpi1}
\Pi_1=\frac{(x-a)^\epsilon f(x)(b-x)^\epsilon}{(x-a)^{1-(1-\epsilon)}(b-x)^{1-(1-\epsilon)}}, 
\end{equation}
where  $\epsilon$ is chosen so that $ 0<\epsilon<\lambda_0$. 

In the numerator,  it can be directly verified with \eqref{equ:wellknowninequality} that
\begin{equation}
(x-a)^\epsilon,\,(b-x)^\epsilon\in H^\epsilon(\overline{\Omega}).
\end{equation}

 Therefore, the product 
\begin{equation}\label{equ-BelongtoHolderianSpace}
(x-a)^\epsilon f(x)(b-x)^\epsilon\in H_0^\epsilon(\overline{\Omega})
\end{equation}
by taking into account the boundary and $f\in H^{\lambda_0}(\overline{\Omega})$. 

In the denominator of \eqref{equ:fractionalpi1}, simply observe that $1-\epsilon>0$.

Hence, $\Pi_1\in H^*(\Omega)$ follows by the definition of $H^*(\Omega)$ (see notation in Section~\ref{Notations}).

Similarly, to see $\Pi_2\in H^*(\Omega)$, we rewrite
\begin{equation}\label{equ-UsedForLp}
\begin{aligned}
\Pi_2&=\int_a^b\left( \frac{x-a}{t-a}\right)^{-\frac{\theta}{2\pi}}\left(\frac{b-x}{b-t}\right)^{\frac{\theta}{2\pi}-\mu} \frac{f(t)}{t-x} dt \\
&=\frac{\int_a^b\left(\frac{x-a}{t-a}\right)^{\epsilon}\left(\frac{b-x}{b-t}\right)^{\theta/(2\pi)-\mu+\epsilon}\frac{(t-a)^{\theta/(2\pi)+\epsilon}f(t)(b-t)^\epsilon}{t-x} dt}{(x-a)^{1-(1-\theta/(2\pi)-\epsilon)}(b-x)^{1-(1-\epsilon)}},
\end{aligned}
\end{equation}
where $\epsilon$ is chosen so that $0<\epsilon<\text{min}\{\lambda_0, 1-\frac{\theta}{2\pi}\}$ (it should be clear this $\epsilon$ is different  from  the one in \eqref{equ:fractionalpi1} and we will use nation $\epsilon$ this way several times in the rest of proof).

In the denominator, $1-\theta/(2\pi)-\epsilon>0$, $1-\epsilon>0$.  

According to the definition of   $H^*(\Omega)$, $\Pi_2$ belongs to $ H^*(\Omega)$ is ensured provided that the numerator is in $H^{\epsilon/2}_0(\overline{\Omega})$, namely
\begin{equation}\label{equ-BelongtoHolderianSpace-1}
\int_a^b\left(\frac{x-a}{t-a}\right)^{\epsilon}\left(\frac{b-x}{b-t}\right)^{\theta/(2\pi)-\mu+\epsilon}\frac{(t-a)^{\theta/(2\pi)+\epsilon}f(t)(b-t)^\epsilon}{t-x} dt\in H^{\epsilon/2}_0(\overline{\Omega}).
\end{equation}

\eqref{equ-BelongtoHolderianSpace-1} is verified by a direct application of  Property~\ref{pro:BInW} by checking that 
\[
(t-a)^{\theta/(2\pi)+\epsilon}f(t)(b-t)^\epsilon\in H^{\epsilon/2}_0(\overline{\Omega}),
\]
which can be justified analogously to \eqref{equ-BelongtoHolderianSpace}, \\
and that  
\[
\epsilon/2<\epsilon<1+\epsilon/2, \quad \epsilon/2<\theta/(2\pi)-\mu+\epsilon<1+\epsilon/2,
\]
where the fact that $\frac{\theta}{2\pi}-\mu>0$ from  \eqref{equ:thetacondition} was used in the second piece.

Hence, 
\begin{equation}\label{equ:Pi2}
\Pi_2\in H^*(\Omega),
\end{equation} 
and therefore, 
\[
\frac{\psi_2(x)}{(b-x)^\mu} \in H^*(\Omega).
\]	 

\vspace{0.2cm}

4.  We continue to consider the case $i=3$.

Substituting for $\psi_3(x)$ into $	\frac{\psi_3(x)}{(b-x)^\mu}$  by using the representation  in   part $(1)$ of the lemma  and simplifying, we have
\begin{equation}
\begin{aligned}
\frac{\psi_3(x)}{(b-x)^\mu}&=\frac{Af(x)}{A^2+B^2}-\frac{B}{\pi(A^2+B^2)}\int_a^b\left( \frac{x-a}{t-a}\right)^{1-\frac{\theta}{2\pi}}\left(\frac{b-x}{b-t}\right)^{\frac{\theta}{2\pi}-1-\mu} \frac{f(t)}{t-x} dt \\
&=\frac{A}{A^2+B^2}\Sigma_1-\frac{B}{\pi(A^2+B^2)}\Sigma_2.
\end{aligned}
\end{equation}

For the first term, $\Sigma_1=f(x)$, which is the same as $\Pi_1$ in the previous step, thus, $\Sigma_1\in H^*(\Omega)$.

For $\Sigma_2$,
\begin{equation}
\begin{aligned}
\Sigma_2&=\int_a^b\left( \frac{x-a}{t-a}\right)^{1-\frac{\theta}{2\pi}}\left(\frac{b-x}{b-t}\right)^{\frac{\theta}{2\pi}-1-\mu} \frac{f(t)}{t-x} dt \\
&=\frac{\int_a^b\left(\frac{x-a}{t-a}\right)^{1-\theta/(2\pi)+\epsilon}\left(\frac{b-x}{b-t}\right)^{\epsilon}\frac{(t-a)^{\epsilon}f(t)(b-t)^{1-(\theta/(2\pi)-\mu-\epsilon)}}{t-x} dt}{(x-a)^{1-(1-\epsilon)}(b-x)^{1-(\theta/(2\pi)-\mu-\epsilon)}},
\end{aligned}
\end{equation}
where $\epsilon$ is chosen so that $0<\epsilon<\text{min} \{ \lambda_0, \frac{\theta}{2\pi}-\mu \}$.

Again, observe that  in the denominator $1-\epsilon>0$  and $\frac{\theta}{2\pi}-\mu-\epsilon>0$. 

Then by the definition of  $H^*(\Omega)$, $\Sigma_2$ belongs to $ H^*(\Omega)$ is guaranteed provided that the numerator is in $H^{\epsilon/2}_0(\overline{\Omega})$, namely
\begin{equation}\label{equ-BelongtoHolderianSpace-2}
\int_a^b\left(\frac{x-a}{t-a}\right)^{1-\theta/(2\pi)+\epsilon}\left(\frac{b-x}{b-t}\right)^{\epsilon}\frac{(t-a)^{\epsilon}f(t)(b-t)^{1-(\theta/(2\pi)-\mu-\epsilon)}}{t-x} dt \in H^{\epsilon/2}_0(\overline{\Omega}).
\end{equation}

\eqref{equ-BelongtoHolderianSpace-2} is guaranteed by  Property~\ref{pro:BInW} and can be justified similarly to \eqref{equ-BelongtoHolderianSpace-1} without essential difference.

Hence, $\Sigma_2\in H^*(\Omega)$, and therefore, 
\[
\frac{\psi_3(x)}{(b-x)^\mu} \in H^*(\Omega).
\]

This completes the proof for part $(3)$.

\vspace{0.2cm}

5. Proof for part $(4)$.

Using \eqref{equ-ForPsiTwoExpression} and integrating both sides by ${_aD_x^{-\mu}}$,
\begin{equation}
	{_aD_x^{-\mu}} \frac{\psi_2}{r_b^\mu}=\frac{A}{A^2+B^2}	{_aD_x^{-\mu}} \Pi_1-\frac{B}{\pi(A^2+B^2)}	{_aD_x^{-\mu}} \Pi_2.
\end{equation}

It is clear that $	{_aD_x^{-\mu}}\Pi_1  \in L^{p}(I_b)$, $p=\frac{1}{1-\frac{\theta}{2\pi}}$, $I_b=(\frac{b-a}{2}, b) $,  since $\Pi_1 =f(x)\in H^{\lambda_0}(\overline{\Omega})$. In order to show  ${_aD_x^{-\mu}} \frac{\psi_2}{r_b^\mu} \in L^{p}(I_b)$, we only need to show ${_aD_x^{-\mu}} \Pi_2\in L^{p}(I_b)$.

Recall from \eqref{equ:Pi2} that $\Pi_2\in H^*(\Omega)$, which  implies that $\Pi_2\in L^z(\Omega)$ for some $z>1$. This allows us to apply the fact (eq.  (11.17), p. 206, \cite{MR1347689}) that 
\[
{_aD_x^{-\mu}} g= \cos(\mu \pi){_xD_b^{-\mu}}g-\sin(\mu \pi) {_xD_b^{-\mu}}(r_b^{-\mu}S(r_b^{\mu}g)),\,\text{for}\, g(x)\in L^p(\Omega),p> 1
\]
to ${_aD_x^{-\mu}}\Pi_2$ to obtain
\begin{equation}
{_aD_x^{-\mu}}\Pi_2=\cos(\mu \pi){_xD_b^{-\mu}}\Pi_2-\sin(\mu \pi) {_xD_b^{-\mu}}(r_b^{-\mu}S(r_b^\mu \Pi_2)).
\end{equation}

Let us examine ${_xD_b^{-\mu}}\Pi_2$ first. Indeed,
\begin{equation}
\Pi_2\in L^{t}(I_b), \quad \text{for any}\quad t>0,
\end{equation}
by seeing that in \eqref{equ-UsedForLp} the $\epsilon$ can be chosen as small as possible.
Certainly, 
\begin{equation}\label{equ-111}
	{_xD_b^{-\mu}}\Pi_2\in L^p(I_b), \quad p=\frac{1}{1-\frac{\theta}{2\pi}}.
\end{equation}

Secondly, we look ${_xD_b^{-\mu}}(r_b^{-\mu}S(r_b^\mu \Pi_2))$.  Utilizing  the second line of  \eqref{equ-UsedForLp}, we obtain
\begin{equation}\label{equ-Analogoue}
r_b^\mu \Pi_2=\frac{\int_a^b\left(\frac{x-a}{t-a}\right)^{\epsilon}\left(\frac{b-x}{b-t}\right)^{\theta/(2\pi)+\epsilon}\frac{(t-a)^{\theta/(2\pi)+\epsilon}f(t)(b-t)^{\epsilon+\mu}}{t-x} dt}{(x-a)^{\theta/(2\pi)+\epsilon}(b-x)^{\epsilon}},
\end{equation}
where  $0<\epsilon<\text{min}\{\lambda_0, 1-\frac{\theta}{2\pi}\}$.

On the right-hand side of ~\eqref{equ-Analogoue},  the numerator belongs to $H^{\epsilon/2}_0(\overline{\Omega})$, which is justified analogously to ~\eqref{equ-BelongtoHolderianSpace-1}. Therefore, by virtue of Property~\ref{pro:BInW},
\begin{equation}\label{equ:Lemma1Pre}
r_b^{-\mu} S(r_b^\mu \Pi_2)=\frac{h(x)}{(x-a)^{\theta/(2\pi)+\epsilon}(b-x)^{\epsilon+\mu}},
\end{equation}
for a certain $h(x)\in H^{\epsilon/2}_0(\overline{\Omega})$ and $\epsilon$ is the same   as in~\eqref{equ-Analogoue}. It is straightforward to check
\[
r_b^{-\mu} S(r_b^\mu \Pi_2)\in L^{1/(\mu+2\epsilon)}(I_b).
\]

 Remembering that the $\epsilon$ can be chosen as small as possible in \eqref{equ:Lemma1Pre} and by another use of Property~\ref{pro-mappings} to \eqref{equ:Lemma1Pre} over the interval $(\frac{b-a}{2}, \, b)$, we  derive
\begin{equation}
{_xD_b^{-\mu}}(r_b^{-\mu}S(r_b^\mu \Pi_2))\in L^{\nu}(I_b),\quad \text{for any}\quad \nu >0.
\end{equation}
 In particular,
\begin{equation}\label{equ-222}
{_xD_b^{-\mu}}(r_b^{-\mu}S(r_b^\mu \Pi_2))\in L^p(I_b), \quad  p=\frac{1}{1-\frac{\theta}{2\pi}}.
\end{equation}
Combining ~\eqref{equ-111} and ~\eqref{equ-222} gives the desired result
 \begin{equation}
{_aD_x^{-\mu}} \frac{\psi_2}{r_b^\mu} \in L^{p}(I_b),
\end{equation}	
where 
\[
p=\frac{1}{1-\frac{\theta}{2\pi}},\,I_b=(\frac{b-a}{2}, b).
\]

6. Proof for part $(5)$.

Since we assume the solutions satisfy $\psi_2(x)=\psi_3(x)$, $\psi_2$ (or $\psi_3$)  belongs to $H^*(\Omega)\cap C([a,b])$ due to $\psi_2\in H^*(\Omega)\cap C( (a,b] )$ and $\psi_3\in H^*(\Omega)\cap C([a,b))$. This means the problem  \eqref{equ-RProblem-1} is solvable in the following four spaces:
\begin{equation}
H^*(\Omega), H^*(\Omega)\cap C( (a,b] ),  H^*(\Omega)\cap C([a,b)), H^*(\Omega)\cap C([a,b]).
\end{equation}

From  Property~\ref{pro:SolvabilityofDSI}, we know that, as a solution of \eqref{equ-RProblem-1}, $\psi_2$ totally has four representations in these four spaces,  namely:
	\begin{equation}
\begin{aligned}
\psi_2(x)&=\frac{AF(x)}{A^2+B^2}-\frac{B}{\pi(A^2+B^2)}\int_a^b\left( \frac{x-a}{t-a}\right)^{1-\frac{\theta}{2\pi}}\left(\frac{b-x}{b-t}\right)^{\frac{\theta}{2\pi}} \frac{F(t)}{t-x} dt\\
 &\in H^*(\Omega)\cap C([a,b]),\\
\psi_2(x)&=\frac{AF(x)}{A^2+B^2}-\frac{B}{\pi(A^2+B^2)}\int_a^b\left( \frac{x-a}{t-a}\right)^{1-\frac{\theta}{2\pi}}\left(\frac{b-x}{b-t}\right)^{\frac{\theta}{2\pi}-1} \frac{F(t)}{t-x} dt\\
 &\in H^*(\Omega)\cap C( [a,b) ),\\
\psi_2(x)&=\frac{AF(x)}{A^2+B^2}-\frac{B}{\pi(A^2+B^2)}\int_a^b\left( \frac{x-a}{t-a}\right)^{-\frac{\theta}{2\pi}}\left(\frac{b-x}{b-t}\right)^{\frac{\theta}{2\pi}} \frac{F(t)}{t-x} dt \\
&\in H^*(\Omega)\cap C((a,b]),\\
\psi_2(x)&=\frac{AF(x)}{A^2+B^2}-\frac{B}{\pi(A^2+B^2)}\int_a^b\left( \frac{x-a}{t-a}\right)^{-\frac{\theta}{2\pi}}\left(\frac{b-x}{b-t}\right)^{\frac{\theta}{2\pi}-1} \frac{F(t)}{t-x} dt\\
&+C\, (x-a)^{-\frac{\theta}{2\pi}}(b-x)^{\frac{\theta}{2\pi}-1}\\
&\in H^*(\Omega).
\end{aligned}
\end{equation}

This completes the proof  provided that $C=0$ in the forth equation. 

To see this, first we calculate
\begin{equation}
\begin{aligned}
C&=\left( \psi_2(x)-\frac{AF(x)}{A^2+B^2}\right) (x-a)^{\frac{\theta}{2\pi}}(b-x)^{1-\frac{\theta}{2\pi}}\\
&+\frac{B}{\pi(A^2+B^2)}\int_a^b\frac{(t-a)^{\theta/(2\pi)}f(t)(b-t)^{1-(\theta/(2\pi)-\mu)}}{t-x} dt,\, x\in \Omega.
\end{aligned}
\end{equation}

The first term equals to $0$ at the boundary point $x=b$.

In the second term, we can directly verify, analogously to \eqref{equ-BelongtoHolderianSpace}, that
\[
(t-a)^{\theta/(2\pi)}f(t)(b-t)^{1-(\theta/(2\pi)-\mu)}\in H^{\epsilon_0}_0(\overline{\Omega}), \epsilon_0=\text{min}\{\lambda_0,\frac{\theta}{2\pi}, 1-(\frac{\theta}{2\pi}-\mu)\}.
\]
Thus, in light of  Property \ref{pro:BInW}, we see
\[
\frac{1}{\pi}\int_a^b\frac{(t-a)^{\theta/(2\pi)}f(t)(b-t)^{1-(\theta/(2\pi)-\mu)}}{t-x} dt \in H^{\epsilon_0}_0(\overline{\Omega}).
\]

Consequently, 
\begin{equation}
\begin{aligned}
C&=\lim_{x\rightarrow b^-} \left( \psi_2(x)-\frac{AF(x)}{A^2+B^2}\right) (x-a)^{\frac{\theta}{2\pi}}(b-x)^{1-\frac{\theta}{2\pi}}\\
&+\lim_{x\rightarrow b^-} \frac{B}{\pi(A^2+B^2)}\int_a^b\frac{(t-a)^{\theta/(2\pi)}f(t)(b-t)^{1-(\theta/(2\pi)-\mu)}}{t-x} dt\\
&=0+0\\
&=0.
\end{aligned}
\end{equation}

This completes the proof of part $(5)$, and the whole proof of Lemma \ref{lemma-Pre-RaiseRegularity} is completed.
\end{proof}
The following lemma provides a bridge connecting the solutions of   coupled Abel integral equations in the Sobolev space to the solutions  in the space $H^*(\Omega)$. By which we mean that, if a solution $\psi(x)$ of the coupled Abel integral equation is located in  $\widehat{H}^{(1+\mu)/2}_0(\Omega)$ (see equation \eqref{eq:RaisingRegularity}), then its $\mu$-th order derivative ${_aD_x^\mu}\psi$ actually has a representative belonging to $H^*(\Omega)$ (which is equivalent to what equation \eqref{equ-Representation of J-1} says).
 This is an important connection and preparation for us to  continue to raise the regularity of ${_aD_x^\mu}\psi$ to better spaces $H^*_\sigma(\Omega)$ from $H^*(\Omega)$ in  subsequent Lemma \ref{lem: RaisingTheRegularity}, thereby raising the regularity of $\psi(x)$.
 
 As we will see later, this $\psi$ in \eqref{eq:RaisingRegularity}  actually represents the weak solution of our problem \eqref{equationformainresult}; Lemma \ref{lem-raising ragularity-1} and   \ref{lem: RaisingTheRegularity} are two key intermediate steps towards converting the weak solution to the classical solution.
\begin{lemma}\label{lem-raising ragularity-1}
Let $c$ be a constant, $\psi(x)\in \widehat{H}^{(1+\mu)/2}_0(\Omega)$ and $f(x)\in H(\overline{\Omega}) $. If 
\begin{equation}\label{eq:RaisingRegularity}
\alpha _aD_x^{-(1-\mu)}\psi+\beta {_xD_b^{-(1-\mu)}}\psi \overset{a.e.}{=}{_aD_x^{-1}}f+c, \; x\in \Omega, 
\end{equation}
then the solution $\psi(x)$ has a representation
\begin{equation}\label{equ-Representation of J-1}
	\psi(x)={_aD_x^{-\mu}}J,
\end{equation}
where $J(x)\in H^*(\Omega)$ and $J(x)$ has four equivalent representations:
\begin{equation}\label{equ-Representation of J-2}
	\begin{aligned}
	J(x)&=\frac{A f(x)}{A^2+B^2}-\frac{B}{\pi(A^2+B^2)}\int_a^b\left( \frac{x-a}{t-a}\right)^{1-\frac{\theta}{2\pi}}\left(\frac{b-x}{b-t}\right)^{\frac{\theta}{2\pi}-\mu} \frac{ f(t)}{t-x} dt,\\
		J(x)&=\frac{A f(x)}{A^2+B^2}-\frac{B}{\pi(A^2+B^2)}\int_a^b\left( \frac{x-a}{t-a}\right)^{1-\frac{\theta}{2\pi}}\left(\frac{b-x}{b-t}\right)^{\frac{\theta}{2\pi}-\mu-1} \frac{ f(t)}{t-x} dt,\\
			J(x)&=\frac{A f(x)}{A^2+B^2}-\frac{B}{\pi(A^2+B^2)}\int_a^b\left( \frac{x-a}{t-a}\right)^{-\frac{\theta}{2\pi}}\left(\frac{b-x}{b-t}\right)^{\frac{\theta}{2\pi}-\mu} \frac{ f(t)}{t-x} dt,\\
				J(x)&=\frac{A f(x)}{A^2+B^2}-\frac{B}{\pi(A^2+B^2)}\int_a^b\left( \frac{x-a}{t-a}\right)^{-\frac{\theta}{2\pi}}\left(\frac{b-x}{b-t}\right)^{\frac{\theta}{2\pi}-\mu-1} \frac{ f(t)}{t-x} dt.
	\end{aligned}
\end{equation}
where 	$A=\alpha-\beta\cos(\mu\pi)$, $B=\beta\sin(\mu\pi)$.
\end{lemma}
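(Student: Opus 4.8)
The plan is to convert \eqref{eq:RaisingRegularity} into the dominant singular integral equation \eqref{equ-RProblem-1} satisfied by $(b-x)^{\mu}\,{_aD_x^{\mu}}\psi$, and then to read off $J$ from Lemma~\ref{lemma-Pre-RaiseRegularity}. First I would observe that, since $(1+\mu)/2\in(1/2,1)$, Property~\ref{pro:alternate-form} gives $\psi={_aD_x^{-(1+\mu)/2}}\phi_1$ for some $\phi_1\in L^2(\Omega)$, so the semigroup property of the left Riemann--Liouville integrals (using $\mu+\tfrac{1-\mu}{2}=\tfrac{1+\mu}{2}$) together with Property~\ref{pro-mappings} yields
\[
\psi={_aD_x^{-\mu}}g,\qquad g:={_aD_x^{-(1-\mu)/2}}\phi_1={_aD_x^{\mu}}\psi\in L^{2/\mu}(\Omega).
\]
It then suffices to show that $g$ has a representative equal to the function $J$ in \eqref{equ-Representation of J-2}; \eqref{equ-Representation of J-1} will follow immediately, and Lemma~\ref{lemma-Pre-RaiseRegularity}(3) will give $J\in H^*(\Omega)$.

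Next I would substitute $\psi={_aD_x^{-\mu}}g$ into \eqref{eq:RaisingRegularity}. The first term becomes $\alpha\,{_aD_x^{-1}}g$ by the semigroup property. For the second term I would apply the classical left--right Riemann--Liouville identity (eq.~(11.17), p.~206, \cite{MR1347689}), valid since $g\in L^{2/\mu}\subset L^{p}$ with $p>1$,
\[
{_aD_x^{-\mu}}g=\cos(\mu\pi)\,{_xD_b^{-\mu}}g-\sin(\mu\pi)\,{_xD_b^{-\mu}}\bigl(r_b^{-\mu}S(r_b^{\mu}g)\bigr),
\]
then apply ${_xD_b^{-(1-\mu)}}$ and the semigroup property once more (after checking $r_b^{-\mu}S(r_b^{\mu}g)\in L^1(\Omega)$: near $b$, $r_b^{\mu}g$ is bounded so $S(r_b^{\mu}g)$ stays bounded there, while $r_b^{-\mu}\in L^1$). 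This recasts \eqref{eq:RaisingRegularity} as the equality of absolutely continuous functions $\alpha\,{_aD_x^{-1}}g+\beta\cos(\mu\pi)\,{_xD_b^{-1}}g-\beta\sin(\mu\pi)\,{_xD_b^{-1}}(r_b^{-\mu}S(r_b^{\mu}g))={_aD_x^{-1}}f+c$; differentiating in $x$ and multiplying through by $r_b^{\mu}=(b-x)^{\mu}$ gives, with $\chi:=r_b^{\mu}g$,
\[
A\chi(x)+\frac{B}{\pi}\int_a^b\frac{\chi(t)}{t-x}\,dt=F(x),\qquad F(x)=(b-x)^{\mu}f(x),
\]
which is exactly \eqref{equ-RProblem-1} with $A=\alpha-\beta\cos(\mu\pi)$, $B=\beta\sin(\mu\pi)$.

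Then I would invoke Lemma~\ref{lemma-Pre-RaiseRegularity}, applicable because $f\in H(\overline\Omega)$: it supplies the unique solutions $\psi_2\in X_2$, $\psi_3\in X_3$ of \eqref{equ-RProblem-1}, and the bound $\mu<\theta/(2\pi)<1$. The decisive step is to identify $\chi$ with $\psi_2$ (hence with $\psi_3$). The difference of two solutions of \eqref{equ-RProblem-1} with integrable endpoint singularities solves the homogeneous equation $A\eta+BS\eta=0$, whose nonzero such solutions (Property~\ref{pro:SolvabilityofDSI} with $f\equiv0$, $i=1$) are the constant multiples of $(x-a)^{-\theta/(2\pi)}(b-x)^{\theta/(2\pi)-1}$. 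But $\chi=r_b^{\mu}g$ with $g\in L^{2/\mu}(\Omega)$ and $2/\mu>2$, and $\theta/(2\pi)>\mu$; hence that power lies in no $L^{2/\mu}$-neighbourhood of $a$, while at $b$ it is too singular to be matched by $\chi=r_b^{\mu}g$. Playing these two endpoint constraints against each other forces the multiplying constant to be $0$, so $\chi=\psi_2=\psi_3$, and in particular the hypothesis of Lemma~\ref{lemma-Pre-RaiseRegularity}(5) holds.

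It then follows from Lemma~\ref{lemma-Pre-RaiseRegularity}(3) that $J:=g=\chi/r_b^{\mu}=\psi_2/(b-x)^{\mu}\in H^*(\Omega)$, and $\psi={_aD_x^{-\mu}}g={_aD_x^{-\mu}}J$ is \eqref{equ-Representation of J-1}; dividing the four equivalent representations of $\psi_2$ from Lemma~\ref{lemma-Pre-RaiseRegularity}(5) by $(b-x)^{\mu}$, and absorbing the $(b-t)^{\mu}$ hidden in $F(t)$ into the kernels, gives precisely the four formulas \eqref{equ-Representation of J-2}. I expect the main obstacle to be the identification $\chi=\psi_2=\psi_3$: it has to reconcile the non-uniqueness of solutions of \eqref{equ-RProblem-1} with the gained integrability $g\in L^{2/\mu}$ ($2/\mu>2$) and with the endpoint behaviour of $\chi$, relying crucially on $\mu<\theta/(2\pi)<1$ from Lemma~\ref{lemma-Pre-RaiseRegularity}(2); a secondary technical point is the justification of the semigroup and composition steps leading to \eqref{equ-RProblem-1}, notably $r_b^{-\mu}S(r_b^{\mu}g)\in L^1(\Omega)$.
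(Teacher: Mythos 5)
Your overall architecture coincides with the paper's: reduce \eqref{eq:RaisingRegularity} to the dominant singular equation \eqref{equ-RProblem-1} for $\chi=r_b^{\mu}\,{_aD_x^{\mu}}\psi$, invoke Lemma~\ref{lemma-Pre-RaiseRegularity} for the particular solutions $\psi_2,\psi_3$, and kill the homogeneous part by endpoint behaviour. The decisive identification $\chi=\psi_2=\psi_3$, however, is exactly where your sketch has a genuine gap, in two respects. First, you classify the difference $\eta=\chi-\psi_i$ as a constant multiple of $(x-a)^{-\theta/(2\pi)}(b-x)^{\theta/(2\pi)-1}$ by citing Property~\ref{pro:SolvabilityofDSI} with $f\equiv 0$; but that property classifies homogeneous solutions only in $H^*(\Omega)$ (and its subspaces $X_i$), whereas $\chi$ is merely $r_b^{\mu}$ times an $L^{2/\mu}$ function and is not known to have a representative in $H^*(\Omega)$ --- establishing membership in $H^*(\Omega)$ for ${_aD_x^{\mu}}\psi$ is precisely the conclusion of the lemma, so assuming it here is circular. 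The paper avoids this by not classifying $\eta$ directly: it divides the homogeneous equation by $r_b^{\mu}$, applies ${_xD_b^{-\mu}}$ together with the commutation Property~\ref{pro:CR}, divides by $(x-a)^{\mu}$, and only then proves (via the $M_1-M_2$ decomposition, using ${_xD_b^{-\mu}}{_aD_x^{\mu}}\psi={_xD_b^{-(1+\mu)/2}}\psi_2\in H^{\mu/2}(\overline\Omega)$ and Property~\ref{pro-correspondence-1}) that the regularized difference lies in $H^*(\Omega)$, so that Property~\ref{pro:SolvabilityofDSI} becomes applicable.

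Second, even granting the power-function form of the difference, your integrability comparison at the endpoints does not close. At $x=a$ you compare against $\chi\in L^{2/\mu}$, but $\psi_2\in X_2$ is itself allowed a singularity of order $(x-a)^{-\theta/(2\pi)}$ at $a$ --- the same order as the homogeneous solution --- so nothing is forced there; and at $x=b$ the exponent test $(1-\tfrac{\theta}{2\pi})\cdot\tfrac{2}{\mu}\geq 1$ needed to exclude $(b-x)^{\theta/(2\pi)-1}$ from the class of $\chi$ holds only when $\tfrac{\theta}{2\pi}\leq 1-\mu/2$, which is not guaranteed by $\mu<\tfrac{\theta}{2\pi}<1$. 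The paper's remedy is to integrate once more by ${_aD_x^{-\mu}}$ and compare at the level of $\psi$ itself: at $x=a$ the term $C_3\widetilde C\,(x-a)^{\mu-\theta/(2\pi)}(b-x)^{\theta/(2\pi)-1}$ is unbounded while $\psi$ and ${_aD_x^{-\mu}}(\Psi_3/r_b^{\mu})$ are bounded (forcing $C_3=0$), and at $x=b$ it uses the sharp statement prepared in part $(4)$ of Lemma~\ref{lemma-Pre-RaiseRegularity}, namely ${_aD_x^{-\mu}}(\Psi_2/r_b^{\mu})\in L^{p}(I_b)$ with exactly $p=1/(1-\tfrac{\theta}{2\pi})$, against which $(b-x)^{\theta/(2\pi)-1}$ is borderline non-integrable (forcing $C_2=0$). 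You would need to import both of these devices --- the $H^*$-regularization via Property~\ref{pro:CR} and the extra ${_aD_x^{-\mu}}$-integration with the $L^{1/(1-\theta/(2\pi))}(I_b)$ comparison --- to make your identification step rigorous.
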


\begin{proof}\;
1.  Differentiating both sides of equation~\eqref{eq:RaisingRegularity} is valid by the assumption $\psi\in \widehat{H}_0^{(1+\mu)/2}(\Omega)$ and Property~\ref{pro:alternate-form}, from which we have
\begin{equation}
D(\alpha _aD_x^{-(1-\mu)}\psi+\beta {_xD_b^{-(1-\mu)}}\psi )\overset{a.e.}{=}f.
\end{equation}
Distributing the differentiation operator $D$ is permitted and gives
\begin{equation}\label{equ:Raising-1}
\alpha {_aD_x^\mu}\psi-\beta {_xD_b^\mu}\psi\overset{a.e.}{=}f.
\end{equation}

Noting $(1+\mu)/2>\mu$ and recalling the knowledge of embedding $\widehat{H}^{(1+\mu)/2}_0(\Omega)$   $\subset$   $ \widehat{H}^{\mu}_0(\Omega)$, we see  from Property~\ref{pro:alternate-form}  that  $\psi(x)$ can be represented as $\psi(x)={_aD_x^{-\mu}}{_aD_x^\mu}\psi$ and ${_aD_x^\mu}\psi\in L^2(\Omega)$.   

Doing back substitution for $\psi$ into \eqref{equ:Raising-1}, the left-hand side becomes
\begin{equation}\label{equ-SubstitutingLeftIngegral}
 \alpha {_aD_x^\mu}\psi-\beta {_xD_b^\mu}\psi=\alpha {_aD_x^\mu}\psi-\beta {_xD_b^\mu}{_aD_x^{-\mu}}{_aD_x^\mu}\psi.
\end{equation}

For notation simplicity, we denote $r_a(x)=x-a, r_b(x)=b-x,  x\in \overline{\Omega}$.

Applying the fact (eq. (11.17), p. 206, \cite{MR1347689}) that 
\[
	{_aD_x^{-\mu}} g= \cos(\mu \pi){_xD_b^{-\mu}}g-\sin(\mu \pi) {_xD_b^{-\mu}}(r_b^{-\mu}S(r_b^{\mu}g)),\,\forall g(x)\in L^p(\Omega),p> 1
\]
 to the right-hand side of \eqref{equ-SubstitutingLeftIngegral}, we have
\begin{equation}
\begin{aligned}
&=\alpha {_aD_x^\mu}\psi-\beta \cos(\mu\pi){_aD_x^\mu}\psi+\beta\sin(\mu\pi)r_b^{-\mu}S(r_b^\mu{_aD_x^\mu}\psi)\\
&=\left( \alpha-\beta\cos(\mu\pi)\right){_aD_x^\mu}\psi+\beta\sin(\mu\pi)r_b^{-\mu}S(r_b^\mu{_aD_x^\mu}\psi).
\end{aligned}
\end{equation}
Inserting this back into equation~\eqref{equ:Raising-1},  multiplying both sides by $r_b^\mu$ and denoting \[
A=\alpha-\beta\cos(\mu\pi),\, B=\beta\sin(\mu\pi),\, \Psi_1(x)=r_b^\mu{_aD_x^\mu}\psi,\, F(x)=r_b^\mu f(x), 
\]
we arrive at
\begin{equation}\label{equ:SDI}
A\Psi_1(x) + \frac{B}{\pi}\int_a^b\frac{\Psi_1(t)}{t-x} dt \overset{a.e.}{=}F(x), x\in \Omega.
\end{equation}

\vspace{0.2cm}

2.  On the other hand, by Lemma~\ref{lemma-Pre-RaiseRegularity}, we already know that there exist solutions $\Psi_2 \in H^*(\Omega)\cap C((a, b])$ and $\Psi_3\in H^*(\Omega)\cap C([a, b))$ satisfying
\begin{equation}\label{equ:SDI2} 
A\Psi_i(x)+ \frac{B}{\pi}\int_a^b\frac{\Psi_i(x)}{t-x} dt =F(x), x\in \Omega, (i=2,3),
\end{equation}
and that
\begin{equation}\label{equ-BelongtoH^*}
\frac{\Psi_2}{(b-x)^\mu}, \frac{\Psi_3}{(b-x)^\mu} \in H^*(\Omega), \quad \mu<\frac{\theta}{2\pi}<1.
\end{equation}

Notice that the distinction between \eqref{equ:SDI} and \eqref{equ:SDI2} is that \eqref{equ:SDI} holds $a.e.$ and \eqref{equ:SDI2} holds for every $x\in \Omega$ and that $\Psi_2$ and $\Psi_3$ belong to $H^*(\Omega)$ but $\Psi_1$ is not immediately clear yet for now. 
 
In the following, our strategy is to intend to show that
\begin{equation}
\Psi_1(x)=\Psi_2(x)=\Psi_3(x),
\end{equation}
which  produces \eqref{equ-Representation of J-1} in the lemma, and after this, \eqref{equ-Representation of J-2} will be obtained shortly, the whole lemma hence will be eventually completed.

\vspace{0.2cm}

3. Now let us continue. 

Subtracting \eqref{equ:SDI2} from \eqref{equ:SDI} gives
\begin{equation}\label{equ:SDI23}
A\Psi(x) + \frac{B}{\pi}\int_a^b\frac{\Psi(t)}{t-x} dt\overset{a.e.}{=}0,\, x\in \Omega,
\end{equation}
where 
\[
\Psi(x)=\Psi_1(x)-\Psi_i(x), (i=2,3).
\]

(Evidently, $\Psi(x)$ depends on $i$ and it should not be confused. We denote $\Psi(x)$ this way simply because it is more convenient to discuss this two cases $i=2,3$ together in the rest of proof rather than separately.)

Dividing both sides of \eqref{equ:SDI23} by $r_b^\mu$, we arrive at
\begin{equation}\label{equ:SDI234}
A\widetilde{\Psi}(x) + \frac{B}{\pi}\frac{1}{(b-x)^\mu}\int_a^b\frac{(b-t)^\mu \widetilde{\Psi}(t)}{t-x} dt\overset{a.e.}{=}0, \, x\in \Omega,
\end{equation}
where 
\[
\widetilde{\Psi}(x)=\frac{\Psi(x)}{(b-x)^\mu}=\frac{\Psi_1(x)}{(b-x)^\mu}-\frac{\Psi_i(x)}{(b-x)^\mu}, (i=2, 3).
\]

\vspace{0.2cm}

4. Let us examine which functional space $\widetilde{\Psi}(x)$ belongs to  (discussing the two cases $i=2,3$ together).

First,
\[
\frac{\Psi_1(x)}{(b-x)^\mu}={_aD_x^\mu}\psi \in L^2(\Omega), \, \text{since $\psi\in \widehat{H}^{(1+\mu)/2}_0(\Omega)$}.
\]
Secondly, since 
\begin{equation}
	\frac{\Psi_i}{(b-x)^\mu}\in H^*(\Omega),
\end{equation}
we can always find a certain $p>1$ that does not depend on $i$ such that
\begin{equation}
		\frac{\Psi_i}{(b-x)^\mu}\in L^p(\Omega), (i=2,3),
\end{equation}
by taking into account the definition of  $H^*(\Omega)$.

 Combining these together, we  conclude that
\begin{equation}\label{eq-find a space}
\widetilde{\Psi}(x) \in L^p(\Omega),\, \text{for a certain $p>1$}.
\end{equation}

\vspace{0.2cm}

5. The establishment of  \eqref{eq-find a space}  allows us to be  able to  apply Property~\ref{pro:CR} to equation~\eqref{equ:SDI234}. To do so, integrating both sides of ~\eqref{equ:SDI234} by ${_xD_b^{-\mu}}$, we obtain
\begin{equation}
A{_xD_b^{-\mu}}\widetilde{\Psi} + \frac{B}{\pi}(x-a)^{\mu}\int_a^b\frac{{_tD_b^{-\mu}}\widetilde{\Psi} }{(t-a)^\mu(t-x)} dt\overset{a.e.}{=}0, \, x\in \Omega.
\end{equation}
Dividing both sides by $(x-a)^\mu$, we arrive at
\begin{equation}\label{equ:SDI2345}
A\widetilde{\widetilde{\Psi}}(x) + \frac{B}{\pi}\int_a^b\frac{\widetilde{\widetilde{\Psi}}(t)}{t-x} dt\overset{a.e.}{=}0, \, x\in \Omega,
\end{equation}
where
\[
\widetilde{\widetilde{\Psi}}(x)=\frac{{_xD_b^{-\mu}}\widetilde{\Psi}}{(x-a)^\mu}.
\]

Before going further, let us call attention to that \eqref{equ:SDI2345} holds $a.e.$ at this stage since $\widetilde{\widetilde{\Psi}}(x)$ is essentially  in terms of $\psi(x)$, and in the next two steps we intend to show that $\widetilde{\widetilde{\Psi}}(x)$ actually admits a good representative such that \eqref{equ:SDI2345} holds for every $x\in \Omega$.

\vspace{0.2cm}

6. Now we assert that $\widetilde{\widetilde{\Psi}}(x)\in H^*(\Omega)$ (for both $i=2,3$). (It should be clear that $\widetilde{\widetilde{\Psi}}(x)\in H^*(\Omega)$ means there is a representative in the equivalence classes of $\widetilde{\widetilde{\Psi}}(x)$ such that it belongs to $ H^*(\Omega)$).

 To see this, substituting for $\widetilde{\Psi}$ into $\widetilde{\widetilde{\Psi}}(x)$, we  have
\begin{equation}\label{equ-Doulble tidle}
\begin{aligned}
\widetilde{\widetilde{\Psi}}(x)&=\frac{1}{(x-a)^\mu} {_xD_b^{-\mu}}\left(\frac{1}{r_b^\mu}\Psi_1-\frac{1}{r_b^\mu}\Psi_i \right)\\
&=\frac{1}{(x-a)^\mu}{_xD_b^{-\mu}}{_aD_x^\mu}\psi -\frac{1}{(x-a)^\mu}{_xD_b^{-\mu}}\frac{\Psi_i}{r_b^\mu}\\
&=M_1-M_2.
\end{aligned}
\end{equation}

It suffices to investigate $M_1$ and $M_2$, respectively.

Consider $M_1$ first and examine  the piece ${_xD_b^{-\mu}}{_aD_x^\mu}\psi$. By Property~\ref{pro:alternate-form}, there exists a function $\psi_1(x)\in L^2(\Omega)$ such that 
\[
{_aD_x^\mu}\psi={_aD_x^\mu}{_aD_x^{-(1+\mu)/2}}\psi_1={_aD_x^{-(1-\mu)/2}}\psi_1.
\]
On the other hand,   there exists a  function $\psi_2(x)\in L^2(\Omega)$ such that
\[
{_aD_x^{-(1-\mu)/2}}\psi_1={_xD_b^{-(1-\mu)/2}}\psi_2,
\]
 (Corollary 1, p. 208, \cite{MR1347689}). 
Thus,
\[
{_xD_b^{-\mu}}{_aD_x^\mu}\psi={_xD_b^{-(1+\mu)/2}}\psi_2\in  H^{\mu/2}  (\overline{\Omega})
\]
 is guaranteed by Property \ref{pro:MappingIntoHolderianFromLp}.
 
 If we equivalently rewrite $M_1$ as
 \begin{equation}\label{eq-Rerepresentation}
 M_1=\frac{(x-a)^\epsilon ({_xD_b^{-\mu}}{_aD_x^\mu}\psi) (b-x)^\epsilon}{(x-a)^{1-(1-\mu-\epsilon)}(b-x)^{1-(1-\epsilon)}},
 \end{equation}
 where $\epsilon$ is chosen so that $0<\epsilon<\min\{1-\mu,\mu/2\}$, then by simple steps as we justified for $\Pi_1$ in the step 3 of  the proof of Lemma~\ref{lemma-Pre-RaiseRegularity},
 we see
 \[
 M_1\in H^*(\Omega),
 \]
 and not repeated here.
 
For  $M_2$, indeed, by virtue of Property~\ref{pro-correspondence-1} and recalling \eqref{equ-BelongtoH^*}, ${_xD_b^{-\mu}}\frac{\Psi_i}{r_b^\mu}$ can be represented as
\begin{equation}
{_xD_b^{-\mu}}\frac{\Psi_i}{r_b^\mu}=\frac{g_i(x)}{(x-a)^{1-\epsilon_1}(b-x)^{1-\epsilon_2}},
\end{equation}
for certain functions $g_i(x)$ and real numbers $k,\epsilon_1,\epsilon_2$ satisfying
\begin{equation}
g_i(x)\in H^k_0(\overline{\Omega}),\quad \mu<k, 0<\epsilon_1, 0<\epsilon_2 \quad (\text{$k,\epsilon_1,\epsilon_2$ depend on $i$}).
\end{equation}
Taking into account the useful fact that
\begin{equation}
\frac{g_i(x)}{(x-a)^\mu}\in H^l_0(\overline{\Omega}) , \quad \text{for any $0<l<k-\mu$},
\end{equation}
 (the value of $\frac{g_i(x)}{(x-a)^\mu}$ at $x=a$ is understood in the  limiting sense) and the definition of  $H^*(\Omega)$, we see

\begin{equation}
M_2=\frac{1}{(x-a)^\mu}{_xD_b^{-\mu}}\frac{\Psi_i}{r_b^\mu}=\frac{\frac{g_i(x)}{(x-a)^\mu} }{(x-a)^{1-\epsilon_1}(b-x)^{1-\epsilon_2}}\in H^*(\Omega), (i=2,3).
\end{equation}

Combing $M_1$ and $M_2$ yields the assertion
\[
\widetilde{\widetilde{\Psi}}(x)\in H^*(\Omega)\quad  \text{for both}\, i=2,3.
\]

\vspace{0.2cm}

7. Once we have the above, it now is notable that equation~\eqref{equ:SDI2345} becomes valid for every point in $\Omega$, namely
\begin{equation}\label{lemma2dominantSI}
A\widetilde{\widetilde{\Psi}}(x) + \frac{B}{\pi}\int_a^b\frac{\widetilde{\widetilde{\Psi}}(t)}{t-x} dt=0,\, \text{for each $x\in \Omega$}.
\end{equation}

(Putting it another way, there is a representative for the equivalence classes of $\widetilde{\widetilde{\Psi}}(x) $ such that it belongs to $H^*(\Omega)$ and makes \eqref{equ:SDI2345} hold for every $x\in \Omega$ ).

On the other hand, remember that, in $H^*(\Omega)$, \eqref{lemma2dominantSI} is unconditionally solvable according to the part $(1)$ of Property~\ref{pro:SolvabilityofDSI}. Hence by utilizing \eqref{eq-sloution-CSE}, we derive that  $\widetilde{\widetilde{\Psi}}(x)$ can be represented as a constant multiple of $(x-a)^{1-n_a-\frac{\theta}{2\pi}}(b-x)^{\frac{\theta}{2\pi}-n_b}$ with choosing $n_a=1,n_b=1$, namely
\begin{equation}\label{equ:SI}
\widetilde{\widetilde{\Psi}}(x)=C_i \cdot(x-a)^{-\frac{\theta}{2\pi}}(b-x)^{\frac{\theta}{2\pi}-1}, 
\end{equation}
where $C_i$ depends on the $i$ in $\widetilde{\widetilde{\Psi}}(x)$  (i=2,3).

In the next two steps, we will show $C_2$ and $C_3$ have to be zero, separately.

\vspace{0.2cm}

8. Using the second line of expression~\eqref{equ-Doulble tidle}, we solve equation~\eqref{equ:SI} for ${_aD_x^\mu}\psi$ to obtain
\begin{equation}\label{eq-Final Equation}
{_aD_x^\mu}\psi=\frac{\Psi_i(x)}{(b-x)^\mu}+C_i \,{_xD_b^\mu}((x-a)^{-\frac{\theta}{2\pi}+\mu}(b-x)^{\frac{\theta}{2\pi}-1}),\, (i=2,3).
\end{equation}

Integrating both sides by ${_aD_x^{-\mu}}$, which is valid, and also noting $\psi(x)={_aD_x^{-\mu}}{_aD_x^{\mu}}\psi$, we have
\begin{equation}\label{equ:Psifunctiontosimplify}
\psi(x)={_aD_x^{-\mu}}\frac{\Psi_i}{r_b^\mu}+C_i \,{_aD_x^{-\mu}}{_xD_b^\mu}((x-a)^{-\frac{\theta}{2\pi}+\mu}(b-x)^{\frac{\theta}{2\pi}-1}),\, (i=2,3).
\end{equation}

Calculating the second term on the right-hand side (using eq. (11.4) and (11.19), \cite{MR1347689}),
\begin{equation}
\begin{aligned}
&{_aD_x^{-\mu}}{_xD_b^\mu}((x-a)^{-\frac{\theta}{2\pi}+\mu}(b-x)^{\frac{\theta}{2\pi}-1})\\
&=\left(\cos(\mu \pi)+\sin(\mu \pi)\cot(\pi-\frac{\theta}{2}) \right)(x-a)^{-\frac{\theta}{2\pi}+\mu}(b-x)^{\frac{\theta}{2\pi}-1}\\
&=\widetilde{C} \cdot (x-a)^{-\frac{\theta}{2\pi}+\mu}(b-x)^{\frac{\theta}{2\pi}-1},
\end{aligned}
\end{equation}
where
\[
\widetilde{C} =\cos(\mu \pi)+\sin(\mu \pi)\cot(\pi-\frac{\theta}{2}).
\]
 Equation \eqref{equ:Psifunctiontosimplify} further becomes
\begin{equation}\label{constantshavetobezero}
\psi(x)={_aD_x^{-\mu}}\frac{\Psi_i}{r_b^\mu}+C_i\widetilde{C} \cdot (x-a)^{-\frac{\theta}{2\pi}+\mu}(b-x)^{\frac{\theta}{2\pi}-1}, \, (i=2,3).
\end{equation}

Consider the case $i=3$ now. On one hand, recall that $\mu<\frac{\theta}{2\pi}<1$ from \eqref{equ-BelongtoH^*}. This implies that the coefficient $\widetilde{C}$ is non-zero and hence that $\widetilde{C} \cdot(x-a)^{-\frac{\theta}{2\pi}+\mu}(b-x)^{\frac{\theta}{2\pi}-1}$ is unbounded at the boundary point $x=a$. On the other hand,   both $\psi(x)$ (belonging to $\widehat{H}^{(1+\mu)/2}_0(\Omega)$) and ${_aD_x^{-\mu}}\frac{\Psi_3}{r_b^\mu}$  (remember $\Psi_3\in H^*(\Omega)\cap C([a, b))$ ) are bounded at $x=a$, which contradicts the unboundedness of $C_3\widetilde{C} \cdot (x-a)^{-\frac{\theta}{2\pi}+\mu}(b-x)^{\frac{\theta}{2\pi}-1}$ unless $C_3=0$. Thereby, we  arrive at
\begin{equation}\label{equ-first-equality-for-i=3}
\psi(x)={_aD_x^{-\mu}}\frac{\Psi_3}{r_b^\mu}.
\end{equation}

So, we have proved \eqref{equ-Representation of J-1} in the lemma by letting $J(x)=\frac{\Psi_3}{r_b^\mu}$ and noting $J(x) \in H^*(\Omega)$ from \eqref{equ-BelongtoH^*}. We remain to show that $\frac{\Psi_3}{r_b^\mu}$ has four equivalent representations, namely \eqref{equ-Representation of J-2}, which follows from the next step.

\vspace{0.2cm}

9.   In this last step we  show that the constant $C_2$ in \eqref{constantshavetobezero} for the case $i=2$ has to be zero as well, namely, $C_2=0$.

Using equation~\eqref{constantshavetobezero} and doing subtraction with each other for $i=2,3$, 
\begin{equation}\label{equ:DoingSubtracting}
{_aD_x^{-\mu}}\frac{\Psi_3}{r_b^\mu}-{_aD_x^{-\mu}}\frac{\Psi_2}{r_b^\mu}=C_2\widetilde{C}\cdot(x-a)^{-\frac{\theta}{2\pi}+\mu}(b-x)^{\frac{\theta}{2\pi}-1}.
\end{equation}

Let us take care about each term  above  at the  boundary point $x=b$ to  obtain a contradiction.

For the left-hand side, using  \eqref{equ-first-equality-for-i=3} and invoking the part $(4)$ of Lemma~\ref{lemma-Pre-RaiseRegularity}, we know
\begin{equation}
{_aD_x^{-\mu}}\frac{\Psi_3}{r_b^\mu} \left(=\psi(x)\in \widehat{H}^{(1+\mu)/2}_0(\Omega)\right), \,\, {_aD_x^{-\mu}}\frac{\Psi_2}{r_b^\mu}  \in L^{p}(I_b),
\end{equation}
where $p=\frac{1}{1-\frac{\theta}{2\pi}},\,I_b=(\frac{b-a}{2}, b) $. However, in the right-hand side of \eqref{equ:DoingSubtracting}, it is clear that
\begin{equation}
\widetilde{C}\cdot(x-a)^{-\frac{\theta}{2\pi}+\mu}(b-x)^{\frac{\theta}{2\pi}-1}  \notin L^{p}(I_b).
\end{equation}

This means that $C_2$ has to be zero in \eqref{equ:DoingSubtracting}, from which it follows that
\begin{equation}\label{equ-first-equality-for-i=2}
\psi(x)={_aD_x^{-\mu}}\frac{\Psi_2}{r_b^\mu}.
\end{equation}

Comparing \eqref{equ-first-equality-for-i=3} and \eqref{equ-first-equality-for-i=2},  we see 
\begin{equation}
{_aD_x^{-\mu}}(\frac{\Psi_3}{r_b^\mu}-\frac{\Psi_2}{r_b^\mu})=0.
\end{equation}

Only trivial solution is allowed by  seeing $\frac{\Psi_2}{r_b^\mu}, \frac{\Psi_3}{r_b^\mu} \in H^*(\Omega)$ from \eqref{equ-BelongtoH^*}, and thus
\begin{equation}\label{equ:equality}
\Psi_2(x)=\Psi_3(x),\, x\in \Omega.
\end{equation}

Once we have  equality \eqref{equ:equality},  by virtue of the part (5) of Lemma~\ref{lemma-Pre-RaiseRegularity}, \eqref{equ-Representation of J-2}, namely the four desired representations for $\frac{\Psi_3}{r_b^\mu}$ (or $\frac{\Psi_2}{r_b^\mu}$) in the lemma,  follows immediately  after dividing by $r_b^\mu$.

This finally completes the whole proof.
\end{proof}
Now we go one  step further from above lemma by showing that ${_aD_x^\mu}\psi$  can actually go  to better spaces $H_\sigma^*(\Omega)$ from  $H^*(\Omega)$ provided that $f$ lies in $H_\sigma^*(\Omega)$ (which is the same as what \eqref{equ:lemma3representation} means in the following).

\begin{lemma}\label{lem: RaisingTheRegularity}
	 Given  $0<\sigma<1$,  let $c$ be a constant, $\psi(x)\in \widehat{H}^{(1+\mu)/2}_0(\Omega)$ and $f(x)\in H(\overline{\Omega}) \cap H^*_\sigma(\Omega)$. If 
	\begin{equation}\label{equ-GoFurther}
	\alpha _aD_x^{-(1-\mu)}\psi+\beta {_xD_b^{-(1-\mu)}}\psi \overset{a.e.}{=}{_aD_x^{-1}}f +c, x\in\Omega,
	\end{equation}
	then the solution $\psi(x)$ has a representation
	\begin{equation}\label{equ:lemma3representation}
	\psi(x)={_aD_x^{-(\mu+\sigma)}}K_\sigma,
	\end{equation}
	where function $K_\sigma(x)$ belongs to $H^*(\Omega)$ ($K_\sigma$ depends on $\sigma$).
\end{lemma}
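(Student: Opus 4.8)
The plan is to bootstrap on Lemma~\ref{lem-raising ragularity-1}. Its hypotheses are weaker than the present ones (we now additionally assume $f\in H^*_\sigma(\Omega)$, and the identity \eqref{equ-GoFurther} is exactly \eqref{eq:RaisingRegularity}), so it already delivers the representation $\psi={}_aD_x^{-\mu}J$ with $J\in H^*(\Omega)$, together with the four equivalent forms of $J$ listed in \eqref{equ-Representation of J-2}. The whole task therefore reduces to upgrading the membership $J\in H^*(\Omega)$ to $J\in H^*_\sigma(\Omega)$: once that is known, Property~\ref{pro-correspondence-1} produces a unique $K_\sigma\in H^*(\Omega)$ with $J={}_aD_x^{-\sigma}K_\sigma$, and then the semigroup property of Riemann--Liouville fractional integration (cf.\ \cite{MR1347689}) gives
\[
\psi={}_aD_x^{-\mu}J={}_aD_x^{-\mu}{}_aD_x^{-\sigma}K_\sigma={}_aD_x^{-(\mu+\sigma)}K_\sigma,
\]
which is the desired \eqref{equ:lemma3representation} (the order $\mu+\sigma$ may exceed $1$, but this is harmless).

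To see $J\in H^*_\sigma(\Omega)$, I would read whichever representation in \eqref{equ-Representation of J-2} is convenient in the form
\[
J=\frac{A}{A^2+B^2}\,f-\frac{B}{A^2+B^2}\,S_{\nu_a,\nu_b}f,
\]
where $S_{\nu_a,\nu_b}$ is the weighted singular operator of Property~\ref{pro-WeightedMapping} and the pair $(\nu_a,\nu_b)$ is one of
\[
\bigl(1-\tfrac{\theta}{2\pi},\,\tfrac{\theta}{2\pi}-\mu\bigr),\quad
\bigl(1-\tfrac{\theta}{2\pi},\,\tfrac{\theta}{2\pi}-\mu-1\bigr),\quad
\bigl(-\tfrac{\theta}{2\pi},\,\tfrac{\theta}{2\pi}-\mu\bigr),\quad
\bigl(-\tfrac{\theta}{2\pi},\,\tfrac{\theta}{2\pi}-\mu-1\bigr).
\]
The term $\tfrac{A}{A^2+B^2}f$ lies in $H^*_\sigma(\Omega)$ because $f$ does, and Property~\ref{pro-WeightedMapping} shows $S_{\nu_a,\nu_b}f\in H^*_\sigma(\Omega)$ as soon as $\sigma-1<\nu_a\le\sigma$ and $\sigma-1<\nu_b\le\sigma$. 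The elementary but crucial point is that, using $\mu<\tfrac{\theta}{2\pi}<1$ (part~(2) of Lemma~\ref{lemma-Pre-RaiseRegularity}) and $0<\sigma<1$, for every admissible $\sigma$ one can choose $\nu_a$ and $\nu_b$ independently among the above values so that both constraints hold: take $\nu_a=1-\tfrac{\theta}{2\pi}$ when $\sigma\ge 1-\tfrac{\theta}{2\pi}$ and $\nu_a=-\tfrac{\theta}{2\pi}$ otherwise, and $\nu_b=\tfrac{\theta}{2\pi}-\mu$ when $\sigma\ge\tfrac{\theta}{2\pi}-\mu$ and $\nu_b=\tfrac{\theta}{2\pi}-\mu-1$ otherwise; in each branch the remaining (upper or lower) inequality is automatic from $\tfrac{\theta}{2\pi}<1$ and $\tfrac{\theta}{2\pi}-\mu>0$. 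Picking the representation in \eqref{equ-Representation of J-2} corresponding to this $(\nu_a,\nu_b)$ then yields $J\in H^*_\sigma(\Omega)$.

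The routine verifications are the identification of the integral in \eqref{equ-Representation of J-2} with $S_{\nu_a,\nu_b}f$ and the case-checking that places $(\nu_a,\nu_b)$ in the admissible square $(\sigma-1,\sigma]^2$. The one genuinely delicate feature — and the reason Lemma~\ref{lem-raising ragularity-1} was stated with \emph{four} representations of $J$ rather than a single one — is exactly this: no single representation has its exponents inside $(\sigma-1,\sigma]^2$ for all $\sigma\in(0,1)$, so one must keep all four forms available and select the right one only after $\sigma$ has been fixed. Once $J\in H^*_\sigma(\Omega)$ is secured, the conclusion \eqref{equ:lemma3representation} follows immediately from Property~\ref{pro-correspondence-1} and the semigroup property of fractional integration.
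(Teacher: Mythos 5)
Your proposal is correct and follows essentially the same route as the paper: invoke Lemma~\ref{lem-raising ragularity-1} to get $\psi={}_aD_x^{-\mu}J$ with the four representations of $J$, split into cases according to whether $\sigma\geq 1-\tfrac{\theta}{2\pi}$ and whether $\sigma\geq\tfrac{\theta}{2\pi}-\mu$ so as to select the representation whose exponents $(\nu_a,\nu_b)$ land in the admissible range of Property~\ref{pro-WeightedMapping}, conclude $J\in H^*_\sigma(\Omega)$, and finish with Property~\ref{pro-correspondence-1} and the semigroup property. Your independent selection of $\nu_a$ and $\nu_b$ reproduces exactly the paper's four-case matching, and your remark about why four representations are needed is precisely the point of the construction.
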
	
\begin{proof}  Notice that, compared to  Lemma~\ref{lem-raising ragularity-1}, only one more condition is imposed in this lemma, namely $f(x)\in H^*_\sigma(\Omega), 0<\sigma<1$. 

\vspace{0.2cm}

		1. As a direct consequence of Lemma~\ref{lem-raising ragularity-1}, we  already know that the function $\psi(x)$ must have a representation
		\begin{equation}\label{equ-FirstIntegralEx}
		\psi(x)={_aD_x^{-\mu}}J,
		\end{equation}
		where $J(x)\in H^*(\Omega)$ and $J(x)$ has four equivalent representations:
		\begin{equation}\label{equ-FourDiffRepresentations}
		\begin{aligned}
		J(x)&=\frac{A f(x)}{A^2+B^2}-\frac{B}{\pi(A^2+B^2)}\int_a^b\left( \frac{x-a}{t-a}\right)^{1-\frac{\theta}{2\pi}}\left(\frac{b-x}{b-t}\right)^{\frac{\theta}{2\pi}-\mu} \frac{ f(t)}{t-x} dt;\\
		J(x)&=\frac{A f(x)}{A^2+B^2}-\frac{B}{\pi(A^2+B^2)}\int_a^b\left( \frac{x-a}{t-a}\right)^{1-\frac{\theta}{2\pi}}\left(\frac{b-x}{b-t}\right)^{\frac{\theta}{2\pi}-\mu-1} \frac{ f(t)}{t-x} dt;\\
		J(x)&=\frac{A f(x)}{A^2+B^2}-\frac{B}{\pi(A^2+B^2)}\int_a^b\left( \frac{x-a}{t-a}\right)^{-\frac{\theta}{2\pi}}\left(\frac{b-x}{b-t}\right)^{\frac{\theta}{2\pi}-\mu} \frac{ f(t)}{t-x} dt;\\
		J(x)&=\frac{A f(x)}{A^2+B^2}-\frac{B}{\pi(A^2+B^2)}\int_a^b\left( \frac{x-a}{t-a}\right)^{-\frac{\theta}{2\pi}}\left(\frac{b-x}{b-t}\right)^{\frac{\theta}{2\pi}-\mu-1} \frac{ f(t)}{t-x} dt.
		\end{aligned}
		\end{equation}
		where 	$A=\alpha-\beta\cos(\mu\pi)$, $B=\beta\sin(\mu\pi)$.

For ease of notation, we denote \eqref{equ-FourDiffRepresentations} as 
	\begin{equation}
\begin{aligned}
J(x)&=\text{Expression}\&1;\\
J(x)&=\text{Expression}\&2;\\
J(x)&=\text{Expression}\&3;\\
J(x)&=\text{Expression}\&4.
\end{aligned}
\end{equation}
	
2. For any given $0<\sigma<1$, $\sigma$  must satisfy one of the following inequality cases:
	\begin{equation}
\begin{aligned}
\text{Case}\&1&: \sigma+\frac{\theta}{2\pi}\geq 1, \sigma +\mu\geq \frac{\theta}{2\pi};\\
\text{Case}\&2&: \sigma+\frac{\theta}{2\pi}\geq 1, \sigma +\mu< \frac{\theta}{2\pi};\\
\text{Case}\&3&: \sigma+\frac{\theta}{2\pi}< 1, \sigma +\mu\geq \frac{\theta}{2\pi};\\
\text{Case}\&4&: \sigma+\frac{\theta}{2\pi}< 1, \sigma +\mu<\frac{\theta}{2\pi}.\\
\end{aligned}
\end{equation}

3. We associate different cases to different expressions as follows:
	\begin{equation}\label{equ-MixedTogether}
\begin{aligned}
\text{Case}\&1& \quad \text{with} \quad\text{Expression}\&1;\\
\text{Case}\&2&\quad\text{with} \quad \text{Expression}\&2;\\
\text{Case}\&3&\quad\text{with} \quad\text{Expression}\&3;\\
\text{Case}\&4&\quad\text{with} \quad \text{Expression}\&4.\\
\end{aligned}
\end{equation}

For each of \eqref{equ-MixedTogether}, applying Property~\ref{pro-WeightedMapping} to the according expression of $J(x)$ is valid by checking the two inequality conditions in Property~\ref{pro-WeightedMapping} and  yields that
\begin{equation}
	J(x)\in H^*_\sigma(\Omega).
\end{equation}

4. Utilizing Property~\ref{pro-correspondence-1}, we know that there exists a  certain function $K_\sigma(x)\in H^*(\Omega)$ such that 
\begin{equation}\label{equ:Tobeinserted}
J(x)={_aD_x^{-\sigma}}K_\sigma,
\end{equation}
	and therefore, by inserting \eqref{equ:Tobeinserted} back into \eqref{equ-FirstIntegralEx},
		\begin{equation}
	\psi(x)={_aD_x^{-(\mu+\sigma)}}K_\sigma
	\end{equation}
	follows from the semigroup property of R-L integral operators. 
	
	This completes the whole proof.
\end{proof}

\section{Proof of Theorem  \ref{theorem}}\label{sec:proofoftheorem}

 We intend to show that the weak solution of problem \eqref{equationformainresult} associated with conditions \eqref{conditionforthemainresult} is actually the true solution to \eqref{equationformainresult} by picking up  regularity and inverting the variational formulation back to original problem \eqref{equationformainresult} pointwisely. The whole proof will be completed by invoking the lemmas that were established in previous sections; the majority of  proof is about the  existence of $u(x)$ satisfying  $u(x)= {_aD_x^{-t}J_t}$, $t<1+\mu$,  from which the confirmation of  true solution  follows.  The uniqueness will be proved at the very end.

\begin{proof}
 1.  For symbol convenience,  denote $s=(1+\mu)/2$ throughout the proof.  From Lemma~\ref{lem:Existence-weak}, we know that there exists a  unique $u\in \widehat{H}^s_0(\Omega)$ such that 
\begin{equation}\label{the:equ-1}
B_2[u, \psi]=(\frac{f}{k}, \psi)_\Omega,
\end{equation}
 for any $\psi\in C_0^\infty(\Omega)$. 
 
 Utilizing the expression of $B_2[\cdot,\cdot]$ in Definition \ref{def-bilinear}, simplifying and operating both sides of \eqref{the:equ-1}, we obtain
 \[
  (L_1(u), D\psi)_\Omega=({_xD_b^{-1}}\frac{f}{k}, D\psi)_\Omega,
 \]
  where 
 \begin{equation}
 \begin{aligned}
 L_1(u)&=\alpha {_aD_x^{\mu}}u-\beta{_xD_b^{\mu}u}\\
 &-\alpha{_xD_b^{-1}(\frac{k'}{k}{_aD_x^{\mu}}u)}
 +\beta {_xD_b^{-1}(\frac{k'}{k}{_xD_b^\mu}u)}\\
 &-\frac{p}{k}u
 -{_xD_b^{-1}}((\frac{p}{k})'u)+{_xD_b^{-1}(\frac{q}{k}u)}.
 \end{aligned}
 \end{equation}
 Namely,
 \begin{equation}\label{equ:variationL}
  (L_1(u)-{_xD_b^{-1}}\frac{f}{k}, D\psi)_\Omega=0, \forall \psi\in C_0^\infty(\Omega).
 \end{equation}

2. Observe that $L_1(u)$ is a summable function, and so is ${_xD_b^{-1}}\frac{f}{k}$. From \eqref{equ:variationL}, there exists a constant $C_1$ such that 
 \begin{equation}
 L_1(u)-{_xD_b^{-1}}\frac{f}{k}\overset{a.e.}{=}C_1, x\in \Omega. 
 \end{equation}
 Isolating the first two terms of  $L_1(u)$ gives
 \begin{equation}\label{thm:Weak-Almost}
 \alpha {_aD_x^{\mu}}u-\beta{_xD_b^{\mu}u}\overset{a.e.}{=}L_2(u)+\frac{p}{k}u,
 \end{equation}
 where 
 \begin{equation}\label{equ:L2Expression}
 \begin{aligned}
 L_2(u)&=\alpha{_xD_b^{-1}(\frac{k'}{k}{_aD_x^{\mu}}u)}
 -\beta {_xD_b^{-1}(\frac{k'}{k}{_xD_b^\mu}u)}\\
 &
 +{_xD_b^{-1}}((\frac{p}{k})'u)-{_xD_b^{-1}(\frac{q}{k}u)}+{_xD_b^{-1}}\frac{f}{k}+C_1.
 \end{aligned}
 \end{equation}
Namely,
 \begin{equation}\label{equ:taking-integ}
 D(\alpha {_aD_x^{-(1-\mu)}}u+\beta{_xD_b^{-(1-\mu)}u})\overset{a.e.}{=}L_2(u)+\frac{p}{k}u.
 \end{equation}
 
 \vspace{0.2cm}
 
 3. In order to get rid of the differentiation operator $D$ in the left-hand side above, we would like to integrate  both sides by ${_aD_x^{-1}}$, which is, however, in general valid for absolutely continuous functions on $\overline{\Omega}$. 
 
 To see $\alpha {_aD_x^{-(1-\mu)}}u+\beta{_xD_b^{-(1-\mu)}u}\in AC(\overline{\Omega})$ (it should be clear that it means there exists a representative of equivalence classes of $\alpha {_aD_x^{-(1-\mu)}}u+\beta{_xD_b^{-(1-\mu)}u}$ belonging to $AC(\overline{\Omega})$, similarly in the following steps), notice that $u\in \widehat{H}^s_0(\Omega)$, which implies that (Property~\ref{pro:alternate-form})
 \begin{equation}\label{expression of u}
 u={_aD^{-s}_x \theta_1}={_xD_b^{-s}\theta_2},
 \end{equation}
 for  certain functions $\theta_1,\theta_2\in L^2(\Omega)$. It follows that
 \begin{equation}
 \alpha {_aD_x^{-(1-\mu)}}u={_aD_x^{-1}}\tilde{\theta}_1 \, \text{and}~ \beta{_xD_b^{-(1-\mu)}}u={_xD_b^{-1}}\tilde{\theta}_2,
 \end{equation}
 for certain $\tilde{\theta}_1, \tilde{\theta}_2 \in L^2(\Omega)$.
 
 This amounts to saying that both $\alpha {_aD_x^{-(1-\mu)}}u$ and $\beta{_xD_b^{-(1-\mu)}u}$ are absolutely continuous on $\overline{\Omega}$, and so is $\alpha {_aD_x^{-(1-\mu)}}u+\beta{_xD_b^{-(1-\mu)}u}$.
 
So,  now taking the integration by ${_aD_x^{-1}}$ at both sides of \eqref{equ:taking-integ} yields
 \begin{equation}\label{equ:ContinueToRaiseTheRegularity}
 \alpha {_aD_x^{-(1-\mu)}}u+\beta{_xD_b^{-(1-\mu)}u}\overset{a.e.}{=}{_aD_x^{-1}}(L_2(u)+\frac{p}{k}u)+C,
 \end{equation}
  $C$ is another certain constant.
  
 Before going any further, it is worthwhile to observe that the right-hand side of \eqref{equ:ContinueToRaiseTheRegularity} is ``better" than the left-hand side in the sense that ${_aD_x^{-1}}(L_2(u)+\frac{p}{k}u)$ has at least integration of order $1$, plus a constant $C$, meanwhile, the left-hand side has only fractional integrations of order $1-\mu$. Since this is an identity, the two sides should behave the ``same", which seemingly suggests that $u$ in the left-hand side is supposed to possess at least fractional integration of order $\mu$ to balance the right-hand side. However, note that both sides are essentially  in terms of $u$ except the constant $C$, if $u$ is of fractional integration of order $\mu$ on the left, then  ${_aD_x^{-1}}(L_2(u)+\frac{p}{k}u)$ on the right is of fractional integration of order $1+\mu$, plus $C$, which is always ``better" than the left-hand side by order $\mu$. This way, we can keep continuing to raise the regularity of $u(x)$ until some threshold (if any) is reached.
 
  Let us come back to  \eqref{equ:ContinueToRaiseTheRegularity} and put this idea into action in the next two steps. 
  
  \vspace{0.2cm}
  
  4. We assert that $u(x)$ in \eqref{equ:ContinueToRaiseTheRegularity} can be represented as
 \begin{equation}
u(x)={_aD_x^{-\mu}}J, \quad \text{where} \quad J(x)\in H^*(\Omega).
\end{equation}

 To see this, we start with showing that, in the right-hand side of \eqref{equ:ContinueToRaiseTheRegularity}, $L_2(u)+\frac{p}{k}u \in H(\overline{\Omega})$.
 
 Check $\frac{p}{k}u$ first,  in view of \eqref{expression of u} and Property \ref{pro:MappingIntoHolderianFromLp}, 
 \[
 u \in H^{\mu/2}(\overline{\Omega}).
 \]
 Hence,
 \[
 \frac{p}{k}u \in H^{\mu/2}(\overline{\Omega}) \quad \text{since}\quad \frac{p}{k}\in C^1(\overline{\Omega}). 
 \]
 
 Similarly, for  each term of $L_2(u)$ by recalling the expression \eqref{equ:L2Expression}, it can be directly checked that 
 \[
 \begin{aligned}
 &\alpha{_xD_b^{-1}(\frac{k'}{k}{_aD_x^{\mu}}u)}
 , \beta {_xD_b^{-1}(\frac{k'}{k}{_xD_b^\mu}u)}, 
{_xD_b^{-1}}((\frac{p}{k})'u), {_xD_b^{-1}(\frac{q}{k}u)}\in H^{1/2}(\overline{\Omega}),\\
&\text{(we can go beyond $1/2$ in the exponent, but it is enough already.)}\\
&{_xD_b^{-1}}\frac{f}{k}\in H(\overline{\Omega}),  \quad
 C_1\in H^1(\overline{\Omega}),
 \end{aligned}
 \]
 by using \eqref{expression of u}, Property \ref{pro:MappingIntoHolderianFromLp},  conditions \eqref{conditionforthemainresult} and the fact that the quotient of two H$\ddot{\text{o}}$lderian functions is still H$\ddot{\text{o}}$lderian provided the denominator does not vanish. 
 
 Thus
 \begin{equation}\label{equ:UsingTheFirstLemmaToRaise}
 L_2(u)+\frac{p}{k}u \in H(\overline{\Omega}).
 \end{equation}

 The establishment of \eqref{equ:UsingTheFirstLemmaToRaise} permits us to apply Lemma \ref{lem-raising ragularity-1} to \eqref{equ:ContinueToRaiseTheRegularity}, which concludes our assertion that $u$ admits a representation of the form
 \begin{equation}\label{equ:leftrepresentation}
 u(x)={_aD_x^{-\mu}}J, \quad \text{where} \quad J(x)\in H^*(\Omega).
 \end{equation}
 According to  Property \ref{pro-correspondence-1}, $u(x)$ can also be represented by
  \[
 u(x)={_xD_b^{-\mu}}\tilde{J}, \quad \text{where} \quad \tilde{J}(x)\in H^*(\Omega).
 \]
It follows that
    \begin{equation}\label{equ:CrucialStep}
 {_aD_x^{\mu}}u, {_xD_b^{\mu}}u\in H^*(\Omega).
 \end{equation}


\vspace{0.2cm}

 5.  We claim that, for any given $0<\sigma<1$, the solution $u$ in \eqref{equ:ContinueToRaiseTheRegularity} can be represented as 
 \begin{equation}\label{equ:claimequation}
 u(x)={_aD_x^{-(\mu +\sigma)}}K_\sigma,
 \end{equation}
 where function $K_\sigma(x)$ belongs to $H^*(\Omega)$ and depends on $\sigma$.
 
 Substituting \eqref{equ:CrucialStep}, \eqref{equ:leftrepresentation} into $L_2(u)$ and \eqref{equ:leftrepresentation} into $\frac{p}{k}u$, we  deduce that 
\begin{equation}\label{equ:Firstcrucialapplication}
 L_2(u)+\frac{p}{k}u  \in H^*_{\mu}(\Omega).
\end{equation}
Indeed, it is directly checked that  
 \begin{equation}\label{equ:alreadyokay}
  L_2(u)\in  H^*_{t}(\Omega), \forall \ 0<t<1,\, \text{i.e.,}\, L_2(u)\in \bigcap_{0<t<1}H^*_t(\Omega)
 \end{equation}

 and that
 \[
 \frac{p}{k}u  \in  H^*_{\mu}(\Omega),
 \]
 by using conditions \eqref{conditionforthemainresult},  \eqref{equ:leftrepresentation}, \eqref{equ:CrucialStep}, Property \ref{pro-correspondence-1} and the fact that the product of two H$\ddot{\text{o}}$lderian functions is still H$\ddot{\text{o}}$lderian.
 
 \eqref{equ:Firstcrucialapplication} gives us the permission to apply Lemma \ref{lem: RaisingTheRegularity} to equation \eqref{equ:ContinueToRaiseTheRegularity}, from which we raise the fractional integration order of $u$ from $\mu $ to $\mu+\mu$, namely, 
 \begin{equation}\label{equ:Ragularitymu}
  u(x)={_aD_x^{-(\mu+\mu)}}K_\mu={_aD_x^{-2\mu}}K_\mu, \, \text{for a certain}\,  K_\mu \in H^*(\Omega).
 \end{equation}

At this stage, if $\mu\geq \sigma$, the claim \eqref{equ:claimequation}  has already been achieved due to the knowledge that $H^*_{y_1}(\Omega)\subseteq H^*_{y_2}(\Omega)$ for $0<y_2\leq y_1<1$.

 Otherwise, by a second substitution  of \eqref{equ:Ragularitymu} into $\frac{p}{k}u $ and noting \eqref{equ:alreadyokay}, we obtain
 \begin{equation}
 L_2(u)+\frac{p}{k}u  \in H^*_{\nu}(\Omega), \quad \nu=\min\{2\mu, \sigma\}.
 \end{equation}
 And then by a second application of Lemma \ref{lem: RaisingTheRegularity} to \eqref{equ:ContinueToRaiseTheRegularity}, we have
 \begin{equation}\label{equ:Ragularity2mu}
 u(x)={_aD_x^{-(\mu+\nu)}}K_{\nu}, \, \text{for a certain}\, K_{\nu} \in H^*(\Omega).
 \end{equation}

 Again, if $2\mu \geq \sigma$, the claim \eqref{equ:claimequation} has been confirmed. Otherwise, repeating, from now on, this procedure $n$ times, where $n$ is the smallest integer satisfying $(2+n)\mu \geq \sigma$, we obtain
  \begin{equation}
 L_2(u)+\frac{p}{k}u  \in H^*_{\sigma}(\Omega), \text{and}\,    u(x)={_aD_x^{-(\mu +\sigma)}}K_\sigma, K_\sigma\in H^*(\Omega).
 \end{equation}

(Notice: it is not concluded that $L_2(u)+\frac{p}{k}u  \in H^*_{(2+n)\mu}(\Omega)$ and $ u(x)={_aD_x^{-(\mu +(2+n)\mu)}}K_{(2+n)\mu}$ since $(2+n)\mu$ can be equal or beyond $1$, which can not guarantee the applicability of  Lemma \ref{lem: RaisingTheRegularity}, this is the essential difficulty that stops us reaching the extreme value $1+\mu$ in Theorem \ref{theorem}.)

 Thus, we have concluded  our claim \eqref{equ:claimequation}, which amounts to saying that for any given $0<t<1+\mu$, $u(x)$ can be represented as
 \[
 u(x)={_aD_x^{-t}J_t},\quad \text{for a certain}\quad 	J_t(x) \in H^*(\Omega).
 \]

\vspace{0.2cm}

 6. For  $t\leq 0$,  ${_aD_x^{-t}}$ denotes the fractional derivatives  or identity operators  (see notation in Section \ref{Notations}) and 
 	\[
 	u(x)={_aD_x^{-t}}({_aD_x^{t}}u),\, {_aD_x^{t}}u\in H^*(\Omega), x\in \Omega
 	\]
 	is always true.
 	
 	Therefore, for any  $t<1+\mu$, $u(x)$ is representable by
 	\begin{equation}\label{equ:weaksolutionu}
 	 	u(x)={_aD_x^{-t}J_t},\quad \text{for a certain}\quad 	J_t(x) \in H^*(\Omega).
 	\end{equation}

 	\vspace{0.2cm}
 	
 7. It remains to prove that $u(x)$ in  \eqref{equ:weaksolutionu} is a true solution to problem \eqref{equationformainresult} and is unique. 
 	
 To see $u(x)$ is a true solution, we need to show that $u\in C(\overline{\Omega})$, $u(a)=u(b)=0$, $Du\in C(\Omega)$, $(\alpha\, {_aD_x}^{-(1-\mu)}+\beta\,{_xD_b}^{-(1-\mu)})Du \in C^1(\Omega)$ and $[L(u)](x)= f(x)$  $\forall x\in \Omega$.	
 	
First,  from \eqref{equ:weaksolutionu},  $u(x)\in AC(\overline{\Omega})$ and $Du\in C(\Omega)$ are satisfied. Second, since $u(x)\in \widehat{H}^{(1+\mu)/2}_0(\Omega)$, $u(a)=u(b)=0$ is ensured.

This allows us to interchange the order of differentiation and fractional integrations to obtain
\[
(\alpha\, {_aD_x}^{-(1-\mu)}+\beta\,{_xD_b}^{-(1-\mu)})Du=D(\alpha\, {_aD_x}^{-(1-\mu)}+\beta\,{_xD_b}^{-(1-\mu)})u,
\]
by invoking \eqref{equ:taking-integ}  we hence see
\begin{equation}\label{equ:Forinvokingincorollary}
\alpha {_aD_x^{-(1-\mu)}}Du+\beta{_xD_b^{-(1-\mu)}}Du=L_2(u)+\frac{p}{k}u  \in C^1(\Omega).
\end{equation}
 
 Differentiating both sides of \eqref{equ:Forinvokingincorollary}, substituting for $L_2(u)$, then multiplying both sides by $k(x)$ and simplifying, we recover FDARE pointwisely, namely
 \[
 [L(u)](x)=f(x), \text{for every point}\, x\in \Omega.
 \]	
 	
 Hence, $u(x)$ in \eqref{equ:weaksolutionu} is a true solution (classical solution) to problem \eqref{equationformainresult}.
 	
 \vspace{0.2cm}
 	
 8.  Lastly, we  prove the uniqueness. Suppose that there is another function $v(x)\in \widehat{H}^{s}_0(\Omega)$ such that it has a representative which is also a true solution to problem \eqref{equationformainresult}, we show $v$ has to coincide with $u$.
 
Multiplying both sides of 
 \[
 \frac{L(v)}{k}=\frac{f}{k}
 \]
  by an arbitrary $\psi(x)\in C^\infty_0(\Omega)$ and integrating over $\Omega$, we have
  \begin{equation}\label{equ:lefthandneeded}
    \int_\Omega \frac{[L(v)](t)}{k(t)}\, \psi(t)\, dt=\int_\Omega \frac{f(t)}{k(t)} \, \psi(t)\, dt.
  \end{equation}

  Since $v(x)$ is a true solution, by definition, $v\in AC(\overline{\Omega})$, $v(a)=v(b)=0$, which implies
  	\begin{equation}\label{equ:thesameexpression}
  	  D(\alpha\, {_aD_x^{-(1-\mu)}}+\beta\,{_xD_b^{-(1-\mu)}})v=(\alpha\, {_aD_x^{-(1-\mu)}}+\beta\,{_xD_b^{-(1-\mu)}})Dv.
  	\end{equation}

  Simplifying and manipulating the left-hand side of \eqref{equ:lefthandneeded} by taking advantage of  \eqref{equ:thesameexpression} and the assumption $v(x)\in \widehat{H}^{s}_0(\Omega)$ yields 
  \[
  B_2[v,\psi]=(\frac{f}{k}, \psi)_\Omega, \psi\in C_0^\infty(\Omega).
  \]
  
 Therefore, $v(x)$ is also a weak solution to  \eqref{equ:variationalformulation}, which implies $u(x)=v(x)$  a.e. by  the uniqueness of weak solution in $\widehat{H}^s_0(\Omega)$.
 
 The whole proof of Theorem \ref{theorem} is completed.
\end{proof}
\subsection{An example}
We provide an example to show that the value $1+\mu$ in Theorem  \ref{theorem} is optimal, namely, for any given $ t>1+\mu$, there exists a true solution $u(x)$ to \eqref{equationformainresult} under conditions \eqref{conditionforthemainresult} such that $u(x)$ is not representable by  ${_aD_x^{-t}J_t}$, namely,  $	u(x)\neq {_aD_x^{-t}J_t}$ for any $J_t(x) \in H^*(\Omega)$.

Let us consider 
	\begin{equation}
\begin{cases}
[L(u)](x)= -1,\, x\in \Omega,\\
u(a)=u(b)=0,\\
[L(u)](x) := -D(\alpha\, {_aD_x^{-(1-\mu)}}+\beta\,{_xD_b^{-(1-\mu)}})Du,\\
\end{cases}
\end{equation}
it is clear that conditions in \eqref{conditionforthemainresult} are satisfied.
From Lemma \ref{lem-solution-2}, we know the true solution is
$u(x)=c\cdot (x-a)^{p+1}(b-x)^{q+1}$, where 
\begin{equation}
c=\frac{(-p-1+\mu)(-p+\mu)\Gamma(1-\mu)}{\mu(1+\mu)\beta \,\Gamma(2-\mu+p)\Gamma(q+2)},
\end{equation}
and  $p,q$ are uniquely determined by 
\begin{equation}\label{equ:conditionsforexample}
p+q=-(1-\mu) \quad \text{and}\quad
\alpha \sin(q\pi)=\beta\sin(p\pi).
\end{equation}

Using conditions \eqref{equ:conditionsforexample}, we carry out
\begin{equation}\label{equ:example1}
\frac{\beta}{\alpha}=\cot(-p\pi)\sin((1-\mu)\pi)-\cos((1-\mu)\pi).
\end{equation}
In fact, if $\frac{\beta}{\alpha}$ (similarly for $\frac{\alpha}{\beta}$) is small enough, for example, let $\frac{\beta}{\alpha}$ satisfies 
\begin{equation}\label{equ:example2}
0<\frac{\beta}{\alpha}<\cot((2-t_0)\pi)\sin((1-\mu)\pi)-\cos((1-\mu)\pi),
\end{equation}
where $1+\mu<t_0<\min\{2, t\}$,  then  \eqref{equ:conditionsforexample}, \eqref{equ:example1} and \eqref{equ:example2} produce
\begin{equation}\label{examplecondition}
p+1-t<-1.
\end{equation}

It is directly evaluated by using Property \ref{coupled-inte} that 
\[
{_aD_x^t}u= C\cdot (x-a)^{p+1-t}\, {_2F_1}(-1-q,p+2,p+2-t,\frac{x-a}{b-a}),\, a<x<b,
\]
where $C$ is a non-zero constant.

We see that ${_aD_x^t}u \notin L^1(\Omega)$ due to \eqref{examplecondition}, which implies that $u(x)$  can not be represented as ${_aD_x^{-t}J_t}$ for any $J_t(x) \in H^*(\Omega)$. 
\section{Applications}\label{sec:applications}
We conclude this paper by giving two  corollaries from a practical point of view.

\subsection{FDARE with Riemann-Liouville derivative}

The FDARE with Riemann-Liouville derivative has also attracted the attention of many authors in modelling, namely,
	\begin{equation}\label{equationformainresult-RL}
\begin{cases}
[\tilde{L}(u)](x)= f(x),\, x\in \Omega,\\
u(a)=u(b)=0,\\
[\tilde{L}(u)](x) := -Dk(x)D(\alpha\, {_aD_x^{-(1-\mu)}}+\beta\,{_xD_b^{-(1-\mu)}})u\\
\qquad \qquad+p(x)Du+q(x)u(x).
\end{cases}
\end{equation}
We say a function $u(x)$ is a true (classical) solution to \eqref{equationformainresult-RL} if $u\in AC(\overline{\Omega})$, $u(a)=u(b)=0$, $Du\in C(\Omega)$, $D(\alpha\, {_aD_x}^{-(1-\mu)}+\beta\,{_xD_b}^{-(1-\mu)})u \in C^1(\Omega)$ and $[\tilde{L}(u)](x)= f(x)$  $\forall x\in \Omega$.

The following corollary clarifies that the FDARE with either Riemann-Liouville derivative or Caputo derivative always has the same  solution.
\begin{corollary}\label{cor:1}
	Let conditions \eqref{conditionforthemainresult} be satisfied. Then in the Sobolev space $\widehat{H}^{(1+\mu)/2}_0(\Omega)$, problem \eqref{equationformainresult} and  \eqref{equationformainresult-RL} have the same true solution.
\end{corollary}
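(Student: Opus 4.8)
The plan is to establish the sharper statement that a function belonging to $\widehat{H}^{(1+\mu)/2}_0(\Omega)$ is a true solution of \eqref{equationformainresult} if and only if it is a true solution of \eqref{equationformainresult-RL}; combining this with the uniqueness in Theorem~\ref{theorem} immediately gives that both problems share one and the same true solution in that space. The whole argument rests on a single commutation identity: for any $w\in AC(\overline{\Omega})$ with $w(a)=w(b)=0$,
\[
D\bigl(\alpha\,{_aD_x^{-(1-\mu)}}+\beta\,{_xD_b^{-(1-\mu)}}\bigr)w=\bigl(\alpha\,{_aD_x^{-(1-\mu)}}+\beta\,{_xD_b^{-(1-\mu)}}\bigr)Dw\quad\text{on }\Omega,
\]
because the boundary contributions created when $D$ is transferred across ${_aD_x^{-(1-\mu)}}$ and ${_xD_b^{-(1-\mu)}}$ are, up to nonzero constants, $(x-a)^{-\mu}w(a)$ and $(b-x)^{-\mu}w(b)$, hence vanish. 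This is exactly the interchange already invoked in the proof of Theorem~\ref{theorem} (see \eqref{equ:thesameexpression}), justified by the interchange theorem for fractional integrals (\cite{MR1347689}, Theorem~2.2, p.~39) together with the absolute continuity of $w$. Its immediate effect is that $[L(w)](x)=[\tilde{L}(w)](x)$ for every such $w$.

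First I would take the unique true solution $u\in\widehat{H}^{(1+\mu)/2}_0(\Omega)$ of \eqref{equationformainresult} provided by Theorem~\ref{theorem}. Being a true solution, $u\in AC(\overline{\Omega})$, $u(a)=u(b)=0$ and $Du\in C(\Omega)$, so the commutation identity applies with $w=u$ and yields $[\tilde{L}(u)](x)=[L(u)](x)=f(x)$ for every $x\in\Omega$. Furthermore $D(\alpha\,{_aD_x^{-(1-\mu)}}+\beta\,{_xD_b^{-(1-\mu)}})u=(\alpha\,{_aD_x^{-(1-\mu)}}+\beta\,{_xD_b^{-(1-\mu)}})Du$, which equals $L_2(u)+\frac{p}{k}u\in C^1(\Omega)$ by \eqref{equ:Forinvokingincorollary}. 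Hence $u$ satisfies all the defining requirements of a true solution of \eqref{equationformainresult-RL}.

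Conversely, I would let $v\in\widehat{H}^{(1+\mu)/2}_0(\Omega)$ be an arbitrary true solution of \eqref{equationformainresult-RL}. Then $v\in AC(\overline{\Omega})$, $v(a)=v(b)=0$, $Dv\in C(\Omega)$, and the same commutation identity (this is \eqref{equ:thesameexpression}) gives $(\alpha\,{_aD_x^{-(1-\mu)}}+\beta\,{_xD_b^{-(1-\mu)}})Dv=D(\alpha\,{_aD_x^{-(1-\mu)}}+\beta\,{_xD_b^{-(1-\mu)}})v\in C^1(\Omega)$, the membership following from the hypothesis that $v$ solves \eqref{equationformainresult-RL} classically. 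Consequently $[L(v)](x)=[\tilde{L}(v)](x)=f(x)$ on $\Omega$, so $v$ is a true solution of \eqref{equationformainresult} lying in $\widehat{H}^{(1+\mu)/2}_0(\Omega)$, and the uniqueness part of Theorem~\ref{theorem} forces $v=u$. Thus $u$ is simultaneously the unique true solution of \eqref{equationformainresult} and of \eqref{equationformainresult-RL}.

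The corollary is mostly bookkeeping once the commutation identity is in hand, so the only genuine --- and quite mild --- obstacle is to make sure that every boundary term arising from moving $D$ past the left and right fractional integrals really disappears. This is exactly where the homogeneous boundary conditions $u(a)=u(b)=0$, together with the absolute continuity that makes the interchange theorem applicable in the first place, are indispensable: without them $L$ and $\tilde{L}$ would differ by singular boundary terms and the two problems would no longer share the same solution.
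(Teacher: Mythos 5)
Your proposal is correct and takes essentially the same route as the paper: the whole corollary rests on the commutation identity $D(\alpha\,{_aD_x^{-(1-\mu)}}+\beta\,{_xD_b^{-(1-\mu)}})w=(\alpha\,{_aD_x^{-(1-\mu)}}+\beta\,{_xD_b^{-(1-\mu)}})Dw$ for $w\in AC(\overline{\Omega})$ with $w(a)=w(b)=0$, which makes the two notions of true solution coincide, after which existence and uniqueness are imported from Theorem~\ref{theorem}. Your version merely spells out the boundary terms and the two directions of the equivalence in more detail than the paper does.
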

\begin{proof}
First,	according to the definition of true  solution,  we readily verify that $u(x)$ is a true solution of \eqref{equationformainresult} if and only if $u(x)$ is a true solution of  \eqref{equationformainresult-RL}  by noting that
	\[
	D(\alpha\, {_aD_x^{-(1-\mu)}}+\beta\,{_xD_b^{-(1-\mu)}})u=(\alpha\, {_aD_x^{-(1-\mu)}}+\beta\,{_xD_b^{-(1-\mu)}})Du
	\]
if  $u\in AC(\overline{\Omega})$, $u(a)=u(b)=0$.

Second,  the existence of such a true solution directly follows from Theorem \ref{theorem}.

\end{proof}
\subsection{More structure information on the solution}
In practice, one is also  interested in knowing what happens at the boundary points of the $\mu$th-order derivative and the first-order derivative of the solution, namely, ${_aD_x^\mu}u$,  ${_xD_b^\mu}u$ and $Du$, which is essentially related to the well-posedness problems of FDARE associated with other types of conditions. The following corollary gives  refined structures of the solution by imposing a little bit stronger conditions, namely,
	\begin{equation}\label{cond:1}
\begin{cases}
0<\alpha, \beta <1, \alpha+ \beta=1, 1/2<\mu<1,\\
f\in H^*_{1-\mu}(\Omega), p(x),q(x), k(x)\in C^2(\overline{\Omega}), \\
k(x)>0 \, \text{on}\, \overline{\Omega}, \frac{q}{k}-\frac{1}{2}(\frac{p}{k})'\geq 0 \, \text{on}\, \overline{\Omega},\\
\pi(1-\mu^2)\cot((1+\mu)\pi/2) +4(b-a)\|\frac{k'}{k}\|_{L^\infty(\Omega)}<0,
\end{cases}
\end{equation}

or 
	\begin{equation}\label{cond:2}
\begin{cases}
0<\alpha, \beta <1, \alpha+ \beta=1, 0<\mu<1,\\
f\in H^*_{1-\mu}(\Omega), q(x), k(x)\in C^2(\overline{\Omega}), \\
p(x)=0, x\in \overline{\Omega},\\
k(x)>0 \, \text{on}\, \overline{\Omega}, \frac{q}{k}-\frac{1}{2}(\frac{p}{k})'\geq 0 \, \text{on}\, \overline{\Omega},\\
\pi(1-\mu^2)\cot((1+\mu)\pi/2) +4(b-a)\|\frac{k'}{k}\|_{L^\infty(\Omega)}<0.
\end{cases}
\end{equation}

\begin{corollary}\label{cor:Thesecondone}
	Let conditions \eqref{cond:1} or  \eqref{cond:2} (or both!) be satisfied.  Then  in $\widehat{H}^{(1+\mu)/2}_0(\Omega)$, there exists a unique true solution $u(x)$ (up to the equivalence classes) to problem \eqref{equationformainresult} and it satisfies the following:
	\begin{enumerate}
		\item  ${_aD_x^\mu}u, {_xD_b^\mu}u \in C(\overline{\Omega})$ and	
		\begin{equation}\label{equ:FDZ}
		   {_aD_x^\mu}u \arrowvert_{x=a}={_xD_b^\mu}u\arrowvert_{x=b}=0.
		\end{equation}
		\item  $Du\in C(\Omega)$  and precisely one of the following holds:
	\begin{enumerate}
		\item 		\begin{equation}\label{equ:DZ}
\lim_{x\rightarrow a^+}Du=0,\quad \lim_{x\rightarrow b^-}Du=0,
	\end{equation}
	\item 	\begin{equation}\label{equ:DZz}
\lim_{x\rightarrow a^+}Du=0,\quad \lim_{x\rightarrow b^-}|Du|=+\infty,
		\end{equation}
\item 	\begin{equation}\label{equ:DZzz}
\lim_{x\rightarrow a^+}|Du|=+\infty,\quad \lim_{x\rightarrow b^-}Du=0,
		\end{equation}
\item 	\begin{equation}\label{equ:DZzzz}
\lim_{x\rightarrow a^+}|Du|=+\infty,\quad \lim_{x\rightarrow b^-}|Du|=+\infty.
		\end{equation}
			\end{enumerate}
	\end{enumerate}
\end{corollary}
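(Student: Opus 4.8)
The plan is to push the regularity statement of Theorem~\ref{theorem} all the way to the borderline order $1+\mu$ under the sharper hypothesis $f\in H^*_{1-\mu}(\Omega)$, and then to read off the endpoint behaviour of $Du$, ${_aD_x^\mu}u$ and ${_xD_b^\mu}u$ from the explicit weighted Cauchy--integral representation of ${_aD_x^\mu}u$ supplied by Lemma~\ref{lem-raising ragularity-1}. First I would observe that each of \eqref{cond:1} and \eqref{cond:2} implies \eqref{conditionforthemainresult}: for \eqref{cond:1}, $C^2(\overline{\Omega})\subset C^1(\overline{\Omega})$ and the derivative of a $C^2$ function lies in $H(\overline{\Omega})$; for \eqref{cond:2}, $p\equiv0$ makes the requirements on $p,p'$ trivial; and in both cases $H^*_{1-\mu}(\Omega)\subset H^*(\Omega)$. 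Hence Theorem~\ref{theorem} already yields the unique true solution $u\in\widehat{H}^{(1+\mu)/2}_0(\Omega)$, and it remains only to refine its structure.

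Next I would revisit the bootstrap in the proof of Theorem~\ref{theorem}. From \eqref{equ:ContinueToRaiseTheRegularity}, i.e.\ $\alpha\,{_aD_x^{-(1-\mu)}}u+\beta\,{_xD_b^{-(1-\mu)}}u \overset{a.e.}{=}{_aD_x^{-1}}g+C$ with $g:=L_2(u)+\frac{p}{k}u$ as in \eqref{equ:L2Expression}, the iteration of Lemma~\ref{lem: RaisingTheRegularity} already gives $u={_aD_x^{-t}}J_t$ with $J_t\in H^*(\Omega)$ for every $t<1+\mu$, so in particular $u\in H^\lambda(\overline{\Omega})$ for every $\lambda<\mu$. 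Feeding this back into $g$ and using the extra assumptions, I would check that $g\in H(\overline{\Omega})\cap H^*_{1-\mu}(\Omega)$: the delicate term is ${_xD_b^{-1}}\frac{f}{k}$, which now lies in $H^\lambda(\overline{\Omega})$ with $\lambda>1-\mu$ because the numerator of $\frac{f}{k}$ inherits a H\"older exponent exceeding $1-\mu$ from $f\in H^*_{1-\mu}(\Omega)$; the term $\frac{p}{k}u$ lies in $H^*_{1-\mu}(\Omega)$ since $u\in H^\lambda(\overline{\Omega})$ for some $\lambda>1-\mu$, and this is precisely where the extra restriction in \eqref{cond:1} (namely $\mu>1/2$), respectively in \eqref{cond:2} (namely $p\equiv0$), is used. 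One further application of Lemma~\ref{lem: RaisingTheRegularity} with $\sigma=1-\mu$ then produces $u(x)={_aD_x^{-(\mu+(1-\mu))}}K={_aD_x^{-1}}K$ with $K\in H^*(\Omega)$. In particular $Du=K\in H^*(\Omega)\subset C(\Omega)$ --- the first assertion of part~(2) --- and ${_aD_x^\mu}u={_aD_x^{-(1-\mu)}}K$, while Property~\ref{pro-correspondence-1} also gives ${_xD_b^\mu}u={_xD_b^{-(1-\mu)}}\widetilde K$ with $\widetilde K\in H^*(\Omega)$.

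For part~(1) I would use the four equivalent representations of $J={_aD_x^\mu}u$ in Lemma~\ref{lem-raising ragularity-1}, applied to \eqref{equ:ContinueToRaiseTheRegularity} with right-hand function $g\in H(\overline{\Omega})\cap H^*_{1-\mu}(\Omega)$. Near $x=a$ I would select the representation whose $(x-a)$-weight has the strictly positive exponent $1-\frac{\theta}{2\pi}$ (recall $\mu<\frac{\theta}{2\pi}<1$ from \eqref{equ:thetacondition}), pull that weight out of the integral, and use the classical dominant-part identities for weighted Cauchy integrals (the $\cos\nu\pi$/$\csc\nu\pi$ formulas already invoked above, e.g.\ eq.~(11.4),~(11.19) of \cite{MR1347689}). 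The boundary limit of $J$ at $x=a$ then reduces to $g(a)$ times a trigonometric factor that vanishes precisely because of the defining relation $\frac{A-iB}{A+iB}=e^{i\theta}$ between $\theta$ and $(\alpha,\beta,\mu)$; together with Property~\ref{pro:BInW} this also shows $J\in C(\overline{\Omega})$, hence ${_aD_x^\mu}u\in C(\overline{\Omega})$ and ${_aD_x^\mu}u\arrowvert_{x=a}=0$. The reflection $Q$ of Property~\ref{p-reflection} transfers the statement to ${_xD_b^\mu}u$ at $x=b$, giving \eqref{equ:FDZ}.

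Finally, for the four-way dichotomy in part~(2) I would extract the leading endpoint asymptotics of $Du=K={_aD_x^{1-\mu}}J$. Applying the fractional derivative ${_aD_x^{1-\mu}}$ to the same representations of $J$ and again using the commutation and dominant-part formulas, one gets $Du(x)=c_a\,(x-a)^{\rho_a}\bigl(1+o(1)\bigr)$ as $x\to a^+$ and $Du(x)=c_b\,(b-x)^{\rho_b}\bigl(1+o(1)\bigr)$ as $x\to b^-$, where the exponents $\rho_a,\rho_b$ are determined explicitly by $\theta$ (hence by $\mu,\alpha,\beta$) and $c_a,c_b$ are computable constants; checking $\rho_a,\rho_b\neq0$ rules out a finite nonzero limit, so $\rho>0$ forces $Du\to0$ and $\rho<0$ (with the leading coefficient nonzero, or else passing to the next term of the expansion) forces $|Du|\to+\infty$, and the four combinations at the two endpoints are exactly \eqref{equ:DZ}--\eqref{equ:DZzzz}. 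The main obstacle will be this endpoint analysis of the weighted singular integrals in parts~(1) and~(2): one must carry out the dominant-part computations carefully enough to see the exact cancellation that forces ${_aD_x^\mu}u\arrowvert_{x=a}=0$ and to pin down the exponents $\rho_a,\rho_b$ that govern $Du$, whereas the regularity boost of the second paragraph is essentially bookkeeping layered on top of the proof of Theorem~\ref{theorem}.
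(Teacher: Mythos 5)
There is a genuine gap. The crux of part (1) is to reach the \emph{endpoint} representation $u={_aD_x^{-(1+\mu)}}J_1$ with $J_1\in H^*(\Omega)$ (equivalently $Du\in H^*_\mu(\Omega)$), since only then does ${_aD_x^{\mu}}u={_aD_x^{-1}}J_1=\int_a^xJ_1$ become continuous on $\overline{\Omega}$ and vanish at $x=a$. Your extra application of Lemma~\ref{lem: RaisingTheRegularity} with $\sigma=1-\mu$ only yields $u={_aD_x^{-1}}K$, i.e.\ $t=1$, which Theorem~\ref{theorem} already provides (any $t<1+\mu$); Lemma~\ref{lem: RaisingTheRegularity} requires $\sigma<1$ and therefore can never reach $t=1+\mu$ --- this is exactly the obstruction the paper flags in the proof of Theorem~\ref{theorem} and in the open question Q. The paper circumvents it by a different mechanism that your proposal never touches: it \emph{differentiates} the identity $\alpha\,{_aD_x^{-(1-\mu)}}Du+\beta\,{_xD_b^{-(1-\mu)}}Du=L_2(u)+\frac{p}{k}u$, checks that the differentiated right-hand side lies in $H^*_{1-\mu}(\Omega)$ (this is precisely where $\mu>1/2$, resp.\ $p\equiv0$, is needed --- for the term $\frac{p}{k}Du$, since $Du\in H^*_\sigma(\Omega)$ only for $\sigma<\mu$ --- not for $\frac{p}{k}u$ as you suggest), and then invokes Lemmas~\ref{lem-solution-1}--\ref{lem:solution-derivative} to obtain the explicit decomposition $Du=\Pi_1+\Pi_2$ with $\Pi_2$ an explicit combination of the powers $(x-a)^p(b-x)^{q+1}$, $(x-a)^{p+1}(b-x)^q$, $(x-a)^p(b-x)^q$, where $p+q=-(1-\mu)$. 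From this one reads off $Du\in H^*_\mu(\Omega)$ and hence part (1).

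Your substitute arguments for both parts are asserted rather than proved. For part (1), the representation of $J={_aD_x^\mu}u$ from Lemma~\ref{lem-raising ragularity-1} only places $J$ in $H^*(\Omega)$, which permits an integrable singularity at $x=a$; the claimed ``trigonometric cancellation'' forcing $J(a)=0$ and $J\in C(\overline{\Omega})$ is exactly the hard computation, and it is not carried out (nor is it the route the paper takes). For part (2), the claimed one-term asymptotics $Du=c_a(x-a)^{\rho_a}(1+o(1))$ with ``computable'' $\rho_a\neq0$ is not established by applying ${_aD_x^{1-\mu}}$ to the weighted Cauchy integrals; the actual structure is that $Du-\Pi_1$ is a sum of competing powers $(x-a)^p$ (with $p<0$) and $(x-a)^{p+1}$ (with $p+1>0$), and the four-way dichotomy \eqref{equ:DZ}--\eqref{equ:DZzzz} is decided by whether the coefficient of the singular power vanishes --- information that only the explicit formula from Lemma~\ref{lem:solution-derivative} supplies. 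Without the generalized Abel equation machinery of Section~\ref{sec:GeneralizedAbelequations}, neither the continuity/vanishing in part (1) nor the exhaustiveness of the alternatives in part (2) is obtained.
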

\begin{proof}
	Let us see that, compared to Theorem \ref{theorem}, stronger conditions are imposed in Corollary \ref{cor:Thesecondone}, namely, either  \eqref{cond:1} or \eqref{cond:2} ensures \eqref{conditionforthemainresult}; therefore, all the justifications in the proof of Theorem \ref{theorem} remain valid to Corollary \ref{cor:Thesecondone}.
	
1.  The existence and uniqueness of the true solution $u(x)$ is an immediate consequence of Theorem \ref{theorem}. And it is representable by 
\begin{equation}\label{equ:Beinvokedforcorollary2}
u(x)= {_aD_x^{-t}J_t},
\end{equation}
$J_t(x) \in H^*(\Omega)$ (depending on $t$)  provided that $t<1+\mu$.

Taking into account $u(a)=u(b)=0$ and  Property \ref{pro-correspondence-1}, we also have
\begin{equation}\label{equ:Beinvokedforcorollary3}
u(x)={_xD_b^{-t}\tilde{J}_t},
\end{equation}
$\tilde{J}_t(x) \in H^*(\Omega)$ (depending on $t$)  provided that $t<1+\mu$.

It remains to show part $(1)$ and part $(2)$.

2. Let us invoke the identity in \eqref{equ:Forinvokingincorollary}, namely,
\begin{equation}\label{equ:beforederivative}
\alpha {_aD_x^{-(1-\mu)}}Du+\beta{_xD_b^{-(1-\mu)}}Du=L_2(u)+\frac{p}{k}u.
\end{equation}
	
	Recalling the expression of $L_2(u)$ and differentiating the right-hand side give
			\begin{equation}\label{equ:expressionforL2}
	D(L_2(u)+\frac{p}{k}u)=-\alpha\frac{k'}{k}{_aD_x^{\mu}}u+\beta \frac{k'}{k}{_xD_b^\mu}u	+\frac{q}{k}u-\frac{f}{k}+\frac{p}{k}Du.
	\end{equation}

	From \eqref{equ:Beinvokedforcorollary2} and \eqref{equ:Beinvokedforcorollary3}, we see
	\begin{equation}\label{equ:muthderivative}
		{_aD_x^{\mu}}u,	{_xD_b^{\mu}}u\in H^*_{\sigma}(\Omega) \quad \forall  \,0<\sigma<1.
	\end{equation}
	
	Substituting \eqref{equ:muthderivative} into \eqref{equ:expressionforL2} and checking each term by utilizing either conditions \eqref{cond:1} or conditions \eqref{cond:2}, it is guaranteed that
	 \[
	D(L_2(u)+\frac{p}{k}u)\in H^*_{1-\mu}(\Omega). 
	\]

In view of Property \ref{Existence-Abel}, we know there exists a function $v(x)\in H^*(\Omega)$ such that
\begin{equation}\label{equ:afterderivative}
\alpha {_aD_x^{-(1-\mu)}}v+\beta{_xD_b^{-(1-\mu)}}v=D(L_2(u)+\frac{p}{k}u).
\end{equation}

Once we have \eqref{equ:beforederivative} and \eqref{equ:afterderivative}, according to Lemma \ref{lem:solution-derivative}, we know 
\begin{equation}\label{derivative-remove}
D(\alpha{_aD_x^{-(1-\mu)}}+\beta{_xD_b^{-(1-\mu)}})(Du-Y)=0, x\in \Omega,
\end{equation}
where 
\begin{equation}\label{equ:coefficients}
\begin{aligned}
Y&=\int_a^x v(t)\, dt-\frac{cS}{S_1}\int_a^x (t-a)^p(b-t)^q\, dt+\frac{c_1S}{S_1} D((x-a)^{p+1}(b-x)^{q+1}),\\
S&= \int_a^bv(t) \, dt,\\
c&=  \Gamma(-q)\left(\alpha(b-a)^{1+p+q}\Gamma(p+1)\int_a^b (t-a)^{-q-1}(b-t)^{-p-1}\, dt \right)^{-1},\\
S_1&=  \frac{\Gamma(-q)\int_a^b (t-a)^p(b-t)^q \, dt}{ \alpha(b-a)^{1+p+q}\Gamma(p+1)\int_a^b (t-a)^{-q-1}(b-t)^{-p-1}\, dt },\\
c_1&= \frac{(-p-1+\mu)(-p+\mu)\Gamma(1-\mu)}{\mu(1+\mu)\beta \,\Gamma(2-\mu+p)\Gamma(q+2)},
\end{aligned}
\end{equation}
and $p$, $q$ are uniquely determined by
\begin{equation}
p+q=-(1-\mu) \quad \text{and}\quad
\alpha \sin(q\pi)=\beta \sin(p\pi).
\end{equation}

3. We would like to get rid of the derivative operator $D$ at the left-hand side of ~\eqref{derivative-remove}. Observe that
\[
(\alpha{_aD_x^{-(1-\mu)}}+\beta{_xD_b^{-(1-\mu)}})(Du-Y)\in C^1(\Omega),
\]
 as a result, 
	\begin{equation}\label{equ:Last1}
	(\alpha{_aD_x^{-(1-\mu)}}+\beta{_xD_b^{-(1-\mu)}})(Du-Y)=c_2,
	\end{equation}
	for a certain constant $c_2$.
	
 On the other hand, taking  Lemma~\ref{lem-solution-1} into account, we know 
	\begin{equation}\label{equ:Last2}
	(\alpha{_aD_x^{-(1-\mu)}}+\beta{_xD_b^{-(1-\mu)}})(c_3\cdot(x-a)^p(b-x)^q)=c_2,
	\end{equation}
	for a suitable constant $c_3$.
	
	\eqref {equ:Last1} subtracting \eqref{equ:Last2} yields
	\begin{equation}
	(\alpha{_aD_x^{-(1-\mu)}}+\beta{_xD_b^{-(1-\mu)}})(Du-Y_1)=0,
	\end{equation}
	where 
	\begin{equation}
	Y_1=Y+c_3\cdot(x-a)^p(b-x)^q.
	\end{equation}
	Since $Du-Y_1\in H^*(\Omega)$, we conclude by Property~\ref{Existence-Abel} that
	\begin{equation}
	Du-Y_1=0, 
	\end{equation}
	namely, 
	\begin{equation}\label{equ:Lastcombination}
	Du(x)=\Pi_1(x)+\Pi_2(x), \, x\in \Omega,
	\end{equation}
	where
	\begin{equation}\label{equ:expression}
	\begin{aligned}
	\Pi_1(x)&=\int_a^x v(t)\, dt-\frac{cS}{S_1}\int_a^x (t-a)^p(b-t)^q\, dt,\\
	\Pi_2(x)&=\frac{c_1S}{S_1} D(x-a)^{p+1}(b-x)^{q+1}+c_3\cdot(x-a)^p(b-x)^q.
	\end{aligned}
	\end{equation}
	
4. Let us prove part $(1)$ now.

 First we claim $Du\in H^*_\mu(\Omega)$.

To see this, we just need to show
\[
\Pi_1(x), \Pi_2(x) \in H^*_\mu(\Omega).
\]

$\Pi_1(x)\in H^*_\mu(\Omega)$ is clear. To see $\Pi_2(x)\in H^*_\mu(\Omega)$, expanding
\begin{equation}\label{equ:threetherms}
\begin{aligned}
\Pi_2(x)&=\frac{c_1(p+1)S}{S_1} (x-a)^{p}(b-x)^{q+1}-\frac{c_1(q+1)S}{S_1} (x-a)^{p+1}(b-x)^{q}\\
&+c_3\cdot(x-a)^p(b-x)^q.
\end{aligned}
\end{equation}

 For the first term, manipulating $(x-a)^{p}(b-x)^{q+1}$ as
\[
\begin{aligned}
(x-a)^{p}(b-x)^{q+1}&=\frac{(x-a)^{p+1+q+\epsilon}(b-x)^{q+1}}{(x-a)^{1+q+\epsilon}}\\
&=\frac{(x-a)^{\mu+\epsilon}(b-x)^{-p+\mu}}{(x-a)^{1-(-q-\epsilon)}}\\
&\text{(using the condition $p+q=-(1-\mu)$)},
\end{aligned}
\]
where $\epsilon$ is chosen so that $0<\epsilon<\min\{-q, -p\}$. In the numerator, we see 
\[
(x-a)^{\mu+\epsilon}(b-x)^{q+1}\in H^{\mu+\epsilon}_0(\overline{\Omega}) 
\]
with the auxiliary inequality
 \begin{equation}
\frac{|y_1^y-y_2^y|}{|y_1-y_2|^y}\leq 1,\, (0\leq y\leq 1, 0<y_1, 0<y_2, y_1\neq y_2).
\end{equation}
Therefore, by the definition of $H^*_\mu(\Omega)$, $(x-a)^{p}(b-x)^{q+1}\in H^*_\mu(\Omega)$.

As regards the second and third term in \eqref{equ:threetherms}, it can be analogously verified, respectively. This concludes our claim $Du\in H^*_\mu(\Omega)$, which in turn implies,  in view of Property \ref{pro-correspondence-1}, that
\[
Du={_aD_x^{-\mu}}J_1={_xD_b^{-\mu}}(-J_2),\, \text{for certain}\, J_1(x), J_2(x)\in H^*(\Omega).
\]

Taking $u(a)=u(b)=0$ into account, we obtain
\[
u(x)={_aD_x^{-1}}Du=-{_xD_b}^{-1}Du={_aD_x^{-(1+\mu)}}J_1={_xD_b^{-(1+\mu)}}J_2.
\] 

So,
\[
 {_aD_x^\mu}u=\int_a^x J_1(t)\, dt, \quad {_xD_b^\mu}u=\int_x^b J_2(t)\, dt,
\]
from which we see ${_aD_x^\mu}u, {_xD_b^\mu}u \in C(\overline{\Omega})$ and	
\[
{_aD_x^\mu}u \arrowvert_{x=a}={_xD_b^\mu}u\arrowvert_{x=b}=0.
\]

5. Last, we prove part $(2)$.

Let us take a closer look at $\Pi_1$ and $\Pi_2$ in \eqref{equ:expression}.

Clearly, $Du \in C(\Omega)$ and $\Pi_1(a)=0$. A direct calculation by substituting for $c,S, S_1$ into $\Pi_1(x)$ from \eqref{equ:coefficients} yields $\Pi_1(b)=0$. 

If we can show that $\Pi_2(x)$ either vanishes or blows up at the boundary point $x=a$ and  either vanishes or blows up at the boundary point $x=b$, then part $(2)$ is proved.

Let us first show that $\Pi_2(x)$ either vanishes or blows up at $x=a$.

To see this, according to the  expression of $\Pi_2(x)$ in \eqref{equ:expression}, if $ \frac{c_1S}{S_1}=0$, it is clear that $\Pi_2(x)$ either vanishes or blows up at $x=a$ by noting $p,q<0$, depending on whether $c_3$ disappears.

If $\frac{c_1S}{S_1}\neq0$, using the other expression in \eqref{equ:threetherms} and  operating with the condition $p+q=-(1-\mu)$, we have
\[
\begin{aligned}
\Pi_2(x)&=\frac{c_1(p+1)S (b-x)^q}{S_1}\left(b-a+\frac{c_3 S_1}{c_1(p+1)S}\right)(x-a)^p\\
&-\frac{c_1(1+\mu) S}{S_1} (x-a)^{p+1}(b-x)^{q},
\end{aligned}
\]

and again we see
\[
\text{either}\quad \lim_{x\rightarrow a^+}\Pi_2(x)=0\quad \text{or}\quad \lim_{x\rightarrow a^+}|\Pi_2(x)|=+\infty,
\]
by noting $p<0$ and $p+1>0$, depending on whether $b-a+\frac{c_3 S_1}{c_1(p+1)S}$ disappears.

In a similar fashion, it can be shown the same at $x=b$. Namely,

 \[
 \text{either}\quad  \lim_{x\rightarrow b^-}\Pi_2(x)=0\quad \text{or}\quad \lim_{x\rightarrow b^-}|\Pi_2(x)|=+\infty.
 \]

Hence, part $(2)$ follows.

 The whole proof is completed.
\end{proof}

In Corollary \ref{cor:Thesecondone}, two key conditions have been imposed, i.e., $1/2<\mu<1$ and $p(x)=0,\, x\in \Omega$, which corresponds to the diffusion order and the advection term, respectively. We want to know if they are intrinsically related to physical phenomena or not, or equivalently, from a mathematical point of view,  whether these constrains can be removed. It turns out that it  is connected to the extreme value $1+\mu$ in Theorem \ref{theorem} in a certain sense and therefore we propose the following question:

Q: Under conditions \eqref{conditionforthemainresult}, can each true solution of problem \eqref{equationformainresult} be represented as $u(x)={_aD_x^{-(1+\mu)}}J, \, J(x)\in H^*(\Omega)$?
%
%
%
%
%


%


\bibliographystyle{siam}
\bibliography{ftpbvp}
 \bigskip \smallskip
 \it
 \noindent
Science and Mathematics Cluster\\
Singapore University of Technology and Design\\
8 Somapah Road Singapore 487372, Singapore\\
e-mail: liyulong0807101@gmail.com \\
    
\end{document}